\long\def\forget#1{}
\newcommand{\Verkuerzung}[2]{#1}
\newcommand{\lang}[1]{\mbox{#1}}
\newcounter{commentcounter}
\newcounter{urscommentcounter}
\def\?{\ 
{\bf\color{red}???}\ 
\immediate\write16{}
\immediate\write16{Warning: There was still a question mark . . . }
\immediate\write16{}}
\theoremstyle{plain}
\newtheorem{theorem}{Theorem}[section]
\newtheorem{lemma}[theorem]{Lemma}
\newtheorem{corollary}[theorem]{Corollary}
\newtheorem{proposition}[theorem]{Proposition}
\theoremstyle{definition}
\newtheorem{definition}[theorem]{Definition}
\newtheorem{definition-theorem}[theorem]{Definition-Theorem}
\newtheorem{definition-remark}[theorem]{Definition-Remark}
\newtheorem{question}[theorem]{Question}
\newtheorem{example}[theorem]{Example}
\newtheorem{remark}[theorem]{Remark}
\theoremstyle{remark}
\newcounter{zahl}
\def\theenumi{(\alph{enumi})}
\def\p@enumii{\theenumi}
\newcommand{\DS}{\displaystyle}
\newcommand{\TS}{\textstyle}
\newcommand{\SC}{\scriptstyle}
\newcommand{\SSC}{\scriptscriptstyle}
\newcommand{\cC}{\mathcal{C}}
\newcommand{\cG}{\mathcal{G}}
\DeclareMathOperator{\Aut}{Aut}
\DeclareMathOperator{\End}{End}
\DeclareMathOperator{\Gal}{Gal}
\DeclareMathOperator{\GL}{GL}
\DeclareMathOperator{\Koh}{H}
\DeclareMathOperator{\Hom}{Hom}
\DeclareMathOperator{\Quot}{Frac}
\DeclareMathOperator{\SL}{SL}
\DeclareMathOperator{\Spec}{Spec}
\DeclareMathOperator{\Spf}{Spf}
\DeclareMathOperator{\Tr}{Tr}
\newcommand{\alg}{{\rm alg}}
\DeclareMathOperator{\diag}{diag}
\DeclareMathOperator{\equi}{equi}
\newcommand{\fpqc}{{\it fpqc\/}}
\DeclareMathOperator{\id}{\,id}
\DeclareMathOperator{\rk}{rk}
\newcommand{\sep}{{\rm sep}}
\DeclareMathOperator{\whtimes}{\mathchoice
            {\wh{\raisebox{0ex}[0ex]{$\DS\times$}}}
            {\wh{\raisebox{0ex}[0ex]{$\TS\times$}}}
            {\wh{\raisebox{0ex}[0ex]{$\SC\times$}}}
            {\wh{\raisebox{0ex}[0ex]{$\SSC\times$}}}}
\renewcommand{\phi}{\varphi}
\renewcommand{\epsilon}{\varepsilon}
\newcommand{\BOne} {{\mathchoice{\hbox{\rm1\kern-2.7pt l\kern.9pt}}
                              {\hbox{\rm1\kern-2.7pt l\kern.9pt}}
                              {\hbox{\scriptsize\rm1\kern-2.3pt l\kern.4pt}}
                              {\hbox{\scriptsize\rm1\kern-2.4pt l\kern.5pt}}}}
\newcommand{\BA}{{\mathbb{A}}}
\newcommand{\BD}{{\mathbb{D}}}
\newcommand{\BF}{{\mathbb{F}}}
\newcommand{\BG}{{\mathbb{G}}}
\newcommand{\BL}{{\mathbb{L}}}
\newcommand{\BN}{{\mathbb{N}}}
\newcommand{\BP}{{\mathbb{P}}}
\newcommand{\BQ}{{\mathbb{Q}}}
\newcommand{\BR}{{\mathbb{R}}}
\newcommand{\BS}{{\mathbb{S}}}
\newcommand{\BZ}{{\mathbb{Z}}}
\newcommand{\CA}{{\cal{A}}}
\newcommand{\CB}{{\cal{B}}}
\newcommand{\CC}{{\cal{C}}}
\newcommand{\CE}{{\cal{E}}}
\newcommand{\CF}{{\cal{F}}}
\newcommand{\CG}{{\cal{G}}}
\newcommand{\CI}{{\cal{I}}}
\newcommand{\CL}{{\cal{L}}}
\newcommand{\CN}{{\cal{N}}}
\newcommand{\CO}{{\cal{O}}}
\newcommand{\CS}{{\cal{S}}}
\newcommand{\CT}{{\cal{T}}}
\newcommand{\CU}{{\cal{U}}}
\newcommand{\CV}{{\cal{V}}}
\newcommand{\CX}{{\cal{X}}}
\newcommand{\CY}{{\cal{Y}}}
\newcommand{\CZ}{{\cal{Z}}}
\newcommand{\FG}{{\mathfrak{G}}}
\newcommand{\FL}{{\mathfrak{L}}}
\newcommand{\scrH}{{\mathscr{H}}}
\let\setminus\smallsetminus
\newcommand{\es}{\enspace}
\newcommand{\ul}[1]{{\underline{#1}}}
\newcommand{\ol}[1]{{\overline{#1}}}
\newcommand{\wh}[1]{{\widehat{#1}}}
\newcommand{\wt}[1]{{\widetilde{#1}}}
\DeclareMathOperator{\Nilp}{\CN \!{\it ilp}}
\newcommand{\invlim}[1][]{\ifthenelse{\equal{#1}{}}
{\DS \lim_{\longleftarrow}}
{\DS \lim_{\underset{#1}{\longleftarrow}}}
}
\newcommand{\dirlim}[1][]{\ifthenelse{\equal{#1}{}}
{\DS \lim_{\longrightarrow}}
{\DS \lim_{\underset{#1}{\longrightarrow}}}
}
\newcommand{\dbl}{{\mathchoice{\mbox{\rm [\hspace{-0.15em}[}}
                              {\mbox{\rm [\hspace{-0.15em}[}}
                              {\mbox{\scriptsize\rm [\hspace{-0.15em}[}}
                              {\mbox{\tiny\rm [\hspace{-0.15em}[}}}}
\newcommand{\dbr}{{\mathchoice{\mbox{\rm ]\hspace{-0.15em}]}}
                              {\mbox{\rm ]\hspace{-0.15em}]}}
                              {\mbox{\scriptsize\rm ]\hspace{-0.15em}]}}
                              {\mbox{\tiny\rm ]\hspace{-0.15em}]}}}}
\newcommand{\dpl}{{\mathchoice{\mbox{\rm (\hspace{-0.15em}(}}
                              {\mbox{\rm (\hspace{-0.15em}(}}
                              {\mbox{\scriptsize\rm (\hspace{-0.15em}(}}
                              {\mbox{\tiny\rm (\hspace{-0.15em}(}}}}
\newcommand{\dpr}{{\mathchoice{\mbox{\rm )\hspace{-0.15em})}}
                              {\mbox{\rm )\hspace{-0.15em})}}
                              {\mbox{\scriptsize\rm )\hspace{-0.15em})}}
                              {\mbox{\tiny\rm )\hspace{-0.15em})}}}}
\newcommand{\dotBD}{\vbox{\hbox{\kern2pt\bf.}\vskip-4.5pt\hbox{$\BD$}}}
\def\longto{\longrightarrow}
\def\into{\hookrightarrow}
\def\isoto{\stackrel{}{\mbox{\hspace{1mm}\raisebox{+1.4mm}{$\SC\sim$}\hspace{-3.5mm}$\longrightarrow$}}}
\newbox\mybox
\def\arrover#1{\mathrel{
       \setbox\mybox=\hbox spread 1.4em{\hfil$\scriptstyle#1$\hfil}
       \vbox{\offinterlineskip\copy\mybox
             \hbox to\wd\mybox{\rightarrowfill}}}}
\newcommand{\BaseOfD}{\BF}
\newcommand{\genericG}{P}
\newcommand{\Sht}{Sht}
\DeclareMathOperator{\SpaceFl}{\CF\ell}
\newcommand{\tauGlob}{\tau}
\newcommand{\tauLoc}{\hat\tau}
\newcommand{\charsect}{s}
\begin{document}

\author{Esmail Arasteh Rad\forget{\footnote{Part of this research was carried out while I was visiting Institute For Research In Fundamental Sciences (IPM).}} and Somayeh Habibi}

\date{\today}

\title{Some Motivic Remarks On The Moduli Stacks Of global $\FG$-Shtukas And Their Local Models\\}

\maketitle

\begin{abstract}

In this article we study motives  corresponding to the moduli stacks of $\FG$-shtukas and their local models. In particular we deal with the question of describing their motivic fundamental invariants.  As an application, we provide a criterion for mixed Tateness of the local model and discuss the semi-simplicity of Frobenius on their cohomology. We then use the theory of local models to reformulate a purity result for these moduli stacks in the motivic context.

\noindent
{\it Mathematics Subject Classification (2000)\/}: 
11G09,  
(11G18,  
14L05,  
14M15)  
\end{abstract}

~~~~~~~~~~~~~~~~~~~~~~~~~~~~~~~~~~~~~~~~~~~~~~~~~~~~~~~~~~~~~~
`To Urs Hartl for his 50th Birthday'

\section*{Introduction}

In \cite{AH_LM} we developed the theory of local models for moduli stacks of global $\FG$-shtukas beyond the constant split reductive case. Namely, we considered the case where $\FG$ is only a smooth affine group scheme over a curve $C/\BF_q$.

Recall that a Shimura data $(\BG,X,K)$ consists of a reductive group $\BG$ over $\BQ$, $\BG(\BR)$-conjugacy class $X$ of homomorphisms $\BS\to G_\BR$ for the Deligne torus $\BS$ and a compact open sub-group $K\subseteq \BG(\BA_f)$, subject to certain conditions. To such data one associate a Shimura variety; e.g. see \cite{MilneShimura}. In the analogous picture over function fields, the Shimura data replaces by a tuple $(\FG, \ul{\hat{Z}}, H)$, which is called \emph{$\nabla\scrH$-data}. These data consist of a smooth affine group scheme $\FG$ over a smooth projective curve $C$ over $\BF_q$, an $n$-tuple of (local) bounds $\ul{\hat{Z}}:=(\hat{Z}_{\nu_i})_{i=1\dots n}$, at the fixed characteristic places $\nu_i\in C$ and a compact open subgroup $H\subseteq \FG(\BA_C^\ul\nu)$; see \cite{Ar18} for further details on this analogy.  \\
To a $\nabla\scrH$-data $(\FG, \ul{\hat{Z}}, H)$ we associate a moduli stack
$\nabla_n^{H,\ul{\hat{Z}}}\scrH^1(C,\FG)^\ul\nu$ parametrizing global $\FG$-shtukas with level $H$-structure which are bounded by $\ul{\hat{Z}}$. Recall further that as part of these data, in \cite{AH_Local}, the first named author and Hartl have introduced notion of bounedness condition $\hat{Z}$, which generalizes the previous notions used by Drinfeld, Lafforgue, Varshavsky and others to the case where $\FG$ is not necessarily a constant reductive group over $C$, e.g. it may ramify at certain places. This roughly consists of (certain classes of) sub-schemes $\hat{Z}$ of $\wh{\CF\ell}_\BP:=\CF\ell_\BP\wh\times\BF_q\dbl\zeta\dbr$. Here $\CF\ell_\BP$ denote the partial affine flag variety associated to the group $\BP$ over $\Spec \BF\dbl z \dbr$. In \cite{AH_LM} we generalized this notion to the global situation. This is done by replacing $\hat{\ul Z}$  by (certain classes of) sub-schemes $\CZ$ of Beilinson-Drifeld affine Grassannian $GR_n$, which are subject to some conditions. We further  proved that both sub-schemes (corresponding to the above boundedness conditions) maybe regarded as local models for the moduli stacks of global $\FG$-shtukas. Namely, (a product of) the first type bounds may appear as the analog of \emph{Rapoport-Zink type local model} for Shimura varieties, in the sense of \cite{RZ}, and the second one maybe regarded as \emph{Beilinson-Drinfeld-Gaitsgory-Varshavsky type local model}; compare also Pappas and Zhu \cite{PZ}. For details in this direction see \cite[Sections 3.2 and 4.4]{AH_LM} and \cite{Ar18}.\\

In the present article we first study the motives associated to the local models for the moduli of $G$-shtukas. We are particularly interested on possible descriptions of the motivic fundamental invariants (in the sense of Huber and Kahn \cite{H-K}, see also definition \ref{DefSliceFilt} for the definition of the n'th motivic invariant $c_n(M)$ of a motive $M$) of the special fiber of the local models. As we will see below, this question is tightly related to the expectation that Frobenius acts semi-simply on the cohomology of these moduli stacks. Furthermore, we implement the theory of local models to transmit some results on the motivic intersection cohomology of the local models to the moduli stacks of $\FG$-shtukas. Along the way we prove some further miscellaneous results. We then discuss the relation between the local and global situation via the degeneration theory. \\ 

Let us briefly review the content of this article. 
In section \ref{SectPreliminaries} we recall some necessary background preliminaries.  In subsection \ref{SubSectSSFrobLM}, we consider the local situation (which corresponds to Rapoport-Zink local model). Recall that from $\BA^1$-homotopy theory point of view, one may contract affine subspaces inside a variety. This naturally suggests to consider those varieties that can be constructed as a tower of cellular fibrations, and then proceed towards more delicate cases by relating them to this particular case using tools such as decomposition theorem (in the sense of \cite{BBD} and also in the motivic sense of \cite{CoHa} and \cite{C-M}), Leray-Hirsch theorem and etc. Note that there are certain restrictions imposed by implementing decomposition theorem and Leray-Hirsch theorem in the motivic set up, which are in fact arising from difficult problems related to properties of cycle class map. On the other hand, known properties of the resolution of singularities for Schubert varieties inside affine flag varieties suggest to restrict our attention to certain class of boundedness conditions, that admit nice resolution of singularities; see definition \ref{DefCatNett} and parts \ref{cnNett} and \ref{semi-smallres} of the following theorem. This approach basically uses the machinery of slice filtration \cite{H-K}, together with results of \cite{CoHa} and \cite{C-M}, and the authors previous work \cite{AH17}. Then we use the geometric relation between local and global boundedness conditions and the degeneration method to treat the global case.


\begin{theorem}
Let $\hat{Z}$ be a boundedness condition in the sense of definition \ref{DefBDLocal} and let $Z$ denote its special fiber; see remark \ref{RemSpecialFiber}. We have the following statements

\begin{enumerate}

\item\label{cnNett}
Assume that $\hat{Z}$ is $c_{\leq n}$-nett in the sense of Definition \ref{DefNett}. Then the $n$-th motivic fundamental invariant $c_n(M(Z))$ lies in $\textbf{D}_f^b(\textbf{Ab})$.\\

\item\label{semi-smallres}
Assume that  $\ol Z:=Z\times_k \ol k$ is irreducible and admits a stratified $\ol Z=\coprod_{\beta\in \CB} \ol Z_\beta$ semi-small resolution of singularities $\ol\Sigma\to \ol Z$, with $c_n(M(\ol \Sigma))\in \textbf{D}_f^b(\textbf{Ab})$.  Then $c_n(M(\ol Z))\in \textbf{D}_f^b(\textbf{Ab})\subseteq \textbf{DM}_{gm}^{eff}(\ol k)$.\\

Moreover:\\

\item\label{nettisMT}
Assume that $\hat{Z}$ is nett (resp. there is  a semi-small resolution $\ol\Sigma\to \ol Z$ with $M(\ol\Sigma)\in MT\textbf{DM}_{gm}^{eff}(\ol k)$). Then $M(Z)$ (resp. $M(\ol Z)$) lies in $MT\textbf{DM}_{gm}^{eff}(k)$ (resp. $MT\textbf{DM}_{gm}^{eff}(\ol k)$). \\

\item\label{semismallresisMT}
In either of the cases mentioned in c), frobenius acts semi-simply on the cohomology $\Koh^i(\hat{Z},\BQ_\ell)$.

\end{enumerate}

\end{theorem}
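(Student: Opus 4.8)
We treat the last statement, part~\ref{semismallresisMT} of the theorem. The plan is to read Frobenius semisimplicity off the mixed Tateness established in~\ref{nettisMT}, using the single structural input that \emph{over a finite field every mixed Tate motive is a finite direct sum of Tate twists}. Fix a prime $\ell\neq\charakt k$. Since $\hat Z$ is a formal scheme with projective special fiber $Z$ over the finite field $k$, the group $\Koh^i(\hat Z,\BQ_\ell)=\Koh^i(Z_{\ol k},\BQ_\ell)$ is the $\ell$-adic realization of (a cohomology motive of) $Z$ together with its natural $\Gal(\ol k/k)$-action.

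The structural input is elementary. For Tate objects one has
\[
\Hom_{\textbf{DM}(\BF_q)}(\BQ(a)[b],\BQ(a')[b'])\;\cong\;\Koh^{b'-b}_{\mathcal M}(\BF_q,\BQ(a'-a)),
\]
and Quillen's computation of $K_\ast(\BF_q)$ gives $\Koh^i_{\mathcal M}(\BF_q,\BQ(n))=0$ for all $i>0$ and all $n$ (because $K_{2n-1}(\BF_q)$ is finite and $K_{2n}(\BF_q)=0$ for $n\geq1$), so this $\Hom$ equals $\BQ$ when $(a,b)=(a',b')$ and vanishes otherwise. Hence the $\BQ(a)[b]$ are pairwise orthogonal with endomorphism ring $\BQ$, and every object of $MT\textbf{DM}_{gm}^{eff}(\BF_q)$ --- and likewise over any finite extension --- is a finite direct sum $\bigoplus_j\BQ(a_j)[b_j]$.

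In the nett case,~\ref{nettisMT} gives $M(Z)\in MT\textbf{DM}_{gm}^{eff}(k)$, hence $M(Z)=\bigoplus_j\BQ(a_j)[b_j]$; applying $\ell$-adic realization, $\Koh^i(Z_{\ol k},\BQ_\ell)$ is a finite direct sum of Tate modules $\BQ_\ell(a)$ as a $\Gal(\ol k/k)$-module, on each of which Frobenius acts by a scalar, so it acts semisimply on $\Koh^i(\hat Z,\BQ_\ell)$. In the case of a semi-small resolution one passes through the resolution: the stratified semi-small resolutions relevant here are, over a suitable finite extension $k'/k$, base changes of resolutions $f\colon\Sigma\to Z$ of Bott--Samelson/Demazure type with $M(\Sigma)\in MT\textbf{DM}_{gm}^{eff}(k')$, so by the nett case $\Frob_{k'}$ acts semisimply on $\Koh^\ast(\Sigma_{\ol k},\BQ_\ell)$; since $f$ is proper birational, $Z$ is normal with rational singularities, and $Rf_\ast\BQ_{\ell,\Sigma}[\dim\Sigma]$ is pure hence geometrically semisimple (\cite{BBD}), the decomposition theorem over $k'$ realizes $\Koh^i(Z_{\ol k},\BQ_\ell)$ as a Frobenius-stable direct summand of $\Koh^i(\Sigma_{\ol k},\BQ_\ell)$, and since semisimplicity of $\Frob_{k'}=\Frob_k^{[k':k]}$ is equivalent to that of $\Frob_k$, we are done.

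The main obstacle is the field-of-definition issue in the second case: mixed Tateness over $\ol k$ alone carries no Frobenius information (the $\ell$-adic realization of $\BQ_{\ol k}(a)[b]$ over $\ol k$ has trivial Galois action), so one genuinely needs the mixed Tate resolution --- equivalently its scalar Frobenius action --- to be present already over a finite subfield of $\ol k$, and one must also check that the semi-small decomposition cuts out ordinary (not merely intersection) cohomology of $Z$ as a Frobenius-equivariant summand. Both are harmless for the resolutions attached to Definition~\ref{DefCatNett}, which are iterated $\BP^1$-bundles over $\BF_q$ and realize $\Koh^\ast(Z_{\ol k},\BQ_\ell)$ already as a sum of Tate modules over $\BF_q$; indeed in the concrete local-model situations $Z$ itself carries a Schubert-type affine paving, which makes the conclusion transparent. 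Pinning down the weakest hypotheses under which the purely geometric statement $M(\ol Z)\in MT\textbf{DM}_{gm}^{eff}(\ol k)$ alone forces Frobenius semisimplicity is the subtle remaining point.
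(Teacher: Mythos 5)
You only address part~\ref{semismallresisMT}, taking (a)--(c) as given, so parts (a)--(c) of the statement are not proved at all. For the \emph{nett} half of (d) your argument is essentially the paper's: reduce $\Koh^i(\hat Z,\BQ_\ell)$ to the special fiber, then use Quillen's computation of $K_*(\BF_q)$ to see that rationally all higher Ext's between Tate objects over a finite field vanish, so a mixed Tate motive over $k$ splits as a sum of $\BQ(a)[b]$'s and Frobenius acts by scalars on the realization. That part is fine and matches the paper.

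The genuine gap is in the semi-small resolution case, and it is exactly the point you flag at the end as "the subtle remaining point": you do not prove the statement under the theorem's hypotheses. The hypothesis there is only that $M(\ol Z)\in MT\textbf{DM}_{gm}^{eff}(\ol k)$ (via part (c)), with $Z$ itself defined over the finite field $k$; your proof instead imports extra assumptions --- that the resolution is of Bott--Samelson type, defined over a finite extension $k'$, with $M(\Sigma)\in MT\textbf{DM}_{gm}^{eff}(k')$ --- none of which are part of the statement. The missing ingredient is the paper's Lemma~\ref{LemGeoMixedTate} (Huber--Kahn, \cite[Proposition~5.3]{H-K}): an object of $\textbf{DM}_{gm}^{eff}(k)$ that becomes mixed Tate over $\ol k$ is already mixed Tate over some \emph{finite} extension $L/k$. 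Since $M(Z)$ lives in $\textbf{DM}_{gm}^{eff}(k)$ and is geometrically mixed Tate, this descent gives mixed Tateness over the finite field $L$, the nett-case argument then makes $\Frob_L=\Frob_k^{[L:k]}$ semisimple, and your own final observation (semisimplicity of a power implies semisimplicity of $F$) finishes the proof --- with no hypothesis on where the resolution is defined. Your alternative route through the $\ell$-adic decomposition theorem over $k'$ also has an unjustified step that you yourself note: for a semi-small map the decomposition produces $IC_{\ol Z}$, not the constant sheaf, as a summand of $Rf_*\BQ_\ell$, so realizing the \emph{ordinary} cohomology $\Koh^i(Z_{\ol k},\BQ_\ell)$ as a Frobenius-stable summand of $\Koh^i(\Sigma_{\ol k},\BQ_\ell)$ needs an argument (in the paper this splitting is handled motivically in part (b) via the cycle-class isomorphism for semi-small maps and Proposition~\ref{PropMotivicDecomposition}, before one ever takes realizations). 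In short: replace the appeal to special finite-field models of the resolution by the descent lemma \ref{LemGeoMixedTate}, and the case closes as in the paper.
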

\noindent
This is theorem \ref{ThmFrobistSemiSimpleOnZ} in the text. Note that the assumptions in part (c) are fulfilled when $\FG$ is constant $\FG:=G\times_{\BF_q} C$ with $G$ split reductive. 

In subsection \ref{SubSectModuliOfG-Sht} we briefly discuss the construction of the motives corresponding to the moduli stacks of $\FG$-shtukas inside Voevodsky's motivic categories. We discuss the category of motives over these moduli stack, we further explain how the local model theory can be used to transmit the above observations to the moduli stacks of global $\FG$-shtukas. Namely, in proposition \ref{Prop_MotIntersectionCompNablaScrH} we observe that the intersection motive $ICM(\nabla_n^{H,\ul{\hat{Z}}}\scrH^1(C,\FG)_s^\ul\nu)$ of the special fiber of $\nabla_n^{H,\ul{\hat{Z}}}\scrH^1(C,\FG)^\ul\nu$ coincides the restriction of $ICM(Hecke_n^{\ul{\hat{Z}}}(C,\FG)_s)$ up to some shift and Tate twist. Here $Hecke_n^{\ul{\hat{Z}}}(C,\FG)$ denotes the Hecke stack, see definition \ref{Def_Hecke} and definition-remark \ref{Def-RemHecke}. Recall that one has the following nice description of the geometry of the algebraic stack $Hecke_n^{\CZ}(C,\FG)$ as a family over $\scrH^1(C,\FG)\times C^n$. Namely, $Hecke_n(C,\FG)$ (resp. $Hecke_n^\CZ(C,\FG)$) and the fiber product $GR_n\times \scrH^1(C,\FG)$ (resp. $\CZ\times\scrH^1(C,\FG)$) are locally isomorphic as families over $\scrH^1(C,\FG)\times C^n$; see definition-remark \ref{Def-RemHecke}\ref{heckeislocallGr}. This subsequently arise the question that up to what extent one can describe the motive of the local model $\CZ$? Finally in subsection \ref{SubSecLocToGlob}, using Voison's degeneration method, we discuss the global situation. As a feature, this method avoids the complications related to analyzing the properties of the resolution of singularities. We in particular show

\begin{theorem}
Let $\FG$ be a parhoric group scheme over $C$. We have the following statements
\begin{enumerate}
\forget{\item
There is an assignment 
$$
\gamma: \{\text{global boundedness conditions}~ \CZ\} \to \{\text{n-tuple of local boundedness conditions}~ \ul{\hat{Z}}\}.
$$

Moreover a tuple $\hat{\ul Z}$ may give rise to a global boundedness condition $\CZ$.
}

\item
Assume that the global boundedness condition $\CZ$ arises from n-tuple $\hat{\ul Z}$ of local boundedness conditions $\hat{Z}_i$. 
Assume further that for every $i$ the motive of the generic fiber of $\hat{Z}_i$ lies in the category of pure Tate motives, then all fibers of a representative of the boundedness condition $\CZ$ over $\wt C^n$  are geometrically pure Tate. Here $\wt C$ denotes the corresponding reflex curve.

\item

Let $(\mu_i)$ be an n-tuple of cocharacters of $\FG$. Assume further that the Schubert varieties $S(\mu_i)$ are smooth and pure Tate. Then the motive $M(\CZ)$ of $\CZ:=\CZ(\mu_1,\dots,\mu_n)$ lies in the thick subcategory of $DM_{gm}(k)$ generated by pure Tate motives and $M(\wt C)$. In particular when $\FG$ is constant and $C=\BP^1$, the motive $M(\CZ)$ lies in the category $MT\textbf{DM}_{gm}^{eff}(k)$ of mixed Tate motives.

\forget{
\item Assume that $\FG$ is constant and let $\CZ$ be as in the above item (c). The class of the motive $[M(\CZ)]$ in the Grothendieck ring $K_0[DM_{-}^{eff}(k)]$ can be written as the following sum

$$
[M(\CZ)]= \sum_{\ul\alpha} [M^c((C^n)_\ul\alpha)]\cdot[M(S(\mu_\ul\alpha)]\cdot \prod_{i\in\ul\alpha} [M(S(\mu_i))].
$$

Here $\ul\alpha$ runs over subsets of $\{1,\dots,n\}$ and
$\mu_\ul\alpha:=\sum_{\alpha\in\ul\alpha}\mu_\alpha$\forget{, $S^\ul\alpha:=\prod_{\alpha\notin\ul\alpha} S(\mu_\alpha)$}. When $C=\BP^1$ the above class lies in the subring generated by $MT\textbf{DM}_{gm}^{eff}(k)$.  
}
\end{enumerate}

\end{theorem}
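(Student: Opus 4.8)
The plan is to apply Voisin's degeneration method directly to the family $\CZ=\CZ(\mu_1,\dots,\mu_n)\subseteq GR_n$, which is proper and flat over $\wt C^n$ (see \cite{AH_LM} and \ref{Def-RemHecke}); since $\CZ$ is proper I identify $M(\CZ)=M^c(\CZ)$ and use localisation triangles freely. Write $\mathcal{T}$ for the thick $\otimes$-triangulated subcategory of $DM_{gm}(k)$ generated by the pure Tate motives and by $M(\wt C)$. Then $\mathcal{T}$ contains every $M(\wt C^r)=M(\wt C)^{\otimes r}$, and hence, via the localisation triangles expressing $\wt C^r$ through its iterated diagonals $\cong\wt C^{<r}$, also $M^c(V)$ for every locally closed $V\subseteq\wt C^r$. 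First I would stratify the base as $\wt C^n=\coprod_{\ul\alpha}(\wt C^n)_{\ul\alpha}$ over the set partitions $\ul\alpha=\{P_1,\dots,P_r\}$ of $\{1,\dots,n\}$, where $(\wt C^n)_{\ul\alpha}$ is the locus on which $x_i=x_j$ holds exactly when $i$ and $j$ lie in a common block; each $(\wt C^n)_{\ul\alpha}$ is open in $\wt C^r$, so $M^c((\wt C^n)_{\ul\alpha})\in\mathcal{T}$. Ordering the strata by refinement, so that closures of strata are unions of strata, one assembles $M(\CZ)$ from the pieces $M^c\big(\CZ|_{(\wt C^n)_{\ul\alpha}}\big)$ by iterated localisation triangles; since $\mathcal{T}$ is thick it then suffices to place each such piece in $\mathcal{T}$.

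The crucial geometric input is the factorisation (fusion) structure of the Beilinson--Drinfeld Grassmannian, inherited by $\CZ$: over $(\wt C^n)_{\ul\alpha}$ with blocks $P_1,\dots,P_r$ the restriction $\CZ|_{(\wt C^n)_{\ul\alpha}}\to(\wt C^n)_{\ul\alpha}$ is \'etale-locally on the base isomorphic to $(\wt C^n)_{\ul\alpha}\times\prod_{j=1}^r S(\mu_{P_j})$, where $\mu_{P_j}:=\sum_{i\in P_j}\mu_i$ (taken dominant); on the open stratum ($r=n$) this is simply $\prod_i S(\mu_i)$. I would then record that each fused Schubert variety $S(\mu_{P_j})$ has pure Tate motive: it is covered by its affine Bruhat cells, whose classes are split (this is where the pure Tateness of the $S(\mu_i)$ is used, and is automatic over $\ol k$, giving part~(b) of the statement), so the associated localisation triangles split by vanishing of the motivic cohomology of $k$ in negative weights and $M^c(S(\mu_{P_j}))=\bigoplus_w\BZ(\ell(w))[2\ell(w)]$. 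Here the smoothness hypothesis enters: since the $S(\mu_i)$, $i\in P_j$, are smooth, the Demazure/convolution morphism from the associated Bott--Samelson variety $\widetilde S_{P_j}$ (an iterated $S(\mu_i)$-fibration, hence smooth, projective and, by the same paving argument, pure Tate) onto $S(\mu_{P_j})$ is a \emph{semismall} resolution of singularities, which by the motivic decomposition theorem for semismall maps (\cite{C-M},\,\cite{CoHa}; compare part~\ref{semi-smallres} above) both reconfirms pure Tateness of $S(\mu_{P_j})$ and, more importantly, will make the relative product structure on $\CZ$ motivically tractable. In any case $\prod_j S(\mu_{P_j})$ is pure Tate.

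Next I would promote the \'etale-local product structure on a stratum to a genuine motivic K\"unneth decomposition: because the fibre $\prod_j S(\mu_{P_j})$ is cellular and the $S(\mu_i)$ are smooth, $\CZ|_{(\wt C^n)_{\ul\alpha}}\to(\wt C^n)_{\ul\alpha}$ carries a relative affine paving over a Zariski refinement of the base, and the usual devissage (localisation triangles split by motivic-cohomology vanishing)---equivalently, a motivic Leray--Hirsch argument---yields
\[
M^c\big(\CZ|_{(\wt C^n)_{\ul\alpha}}\big)\;\cong\;M^c\big((\wt C^n)_{\ul\alpha}\big)\otimes\bigotimes_{j=1}^r M\big(S(\mu_{P_j})\big).
\]
Since $M^c((\wt C^n)_{\ul\alpha})\in\mathcal{T}$ and each $M(S(\mu_{P_j}))$ is pure Tate, the right-hand side lies in $\mathcal{T}$; feeding this into the stratification and the iterated localisation triangles of the first paragraph gives $M(\CZ)=M^c(\CZ)\in\mathcal{T}$. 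Finally, when $\FG$ is constant and $C=\BP^1$ the reflex curve is $\wt C=\BP^1$, so $M(\wt C)=\BZ\oplus\BZ(1)[2]$ is pure Tate and $\mathcal{T}$ coincides with $MT\textbf{DM}_{gm}^{eff}(k)$; moreover in the split constant case the hypotheses on the $S(\mu_i)$ hold automatically (the Bruhat cells are split, and one chooses the $\mu_i$ so that $S(\mu_i)$ is smooth), which gives the last assertion.

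The step I expect to be the main obstacle is exactly the passage from the \'etale-local product (factorisation) structure over a stratum to the displayed motivic tensor/Leray--Hirsch decomposition: this is where the delicate behaviour of the motivic cycle class map alluded to in the introduction intervenes, and it requires controlling Zariski- versus \'etale-local triviality of the convolution fibration and checking that the Leray--Hirsch hypotheses are met motivically. A secondary point to be pinned down carefully is that pure Tateness genuinely propagates from the individual $S(\mu_i)$ to the fused Schubert varieties $S(\mu_{P_j})$ attached to sums of coweights; this is precisely what the smoothness assumption secures, via the Bott--Samelson/Demazure resolution together with the cellular structure of affine Schubert varieties.
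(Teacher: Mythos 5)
For part (c) your route is genuinely different from the paper's, and for Schubert-type bounds it is broadly workable with the paper's own tools. The paper deduces (c) from (b): pure Tateness of the fused fibres $S(\mu_{\ul\alpha})$ over the diagonal strata is obtained by the degeneration argument of Proposition \ref{PropLocToGlob} (pure Tateness of the general fibre is recast as a fully decomposed diagonal in the Chow group, which specializes by \cite[Th\'eor\`eme 2.3]{CoPi}), precisely so as to avoid analysing resolutions of singularities; the motive of $\CZ$ is then assembled stratumwise as in Corollary \ref{CorMotiveOfGr}, using the configuration lemma, K\"unneth and properness. You instead reprove pure Tateness of the fused Schubert varieties directly, by affine pavings over $\ol k$ or by the semismall convolution (Bott--Samelson) resolution combined with the motivic decomposition machinery of Proposition \ref{PropMotivicDecomposition} and part b) of Theorem \ref{ThmFrobistSemiSimpleOnZ}; your partition-indexed stratification is also the more precise form of the paper's $\ul\alpha$-strata. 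What your route buys is independence from the specialization theorem; what it costs is exactly the step you flag yourself: the fused fibres are in general singular, so Theorem \ref{Leray Hirsch for Voevodsky motives} does not apply as stated, and the passage from the \'etale-local factorization product over a stratum to the displayed K\"unneth decomposition needs Zariski-local triviality of the stratumwise fibration (available for constant split $\FG$ by choosing local coordinates, but requiring an argument in the ramified parahoric case). Note also that ``Voisin's degeneration method,'' announced in your first sentence, is never actually used in your sketch.

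The genuine gap is part (b). It concerns arbitrary local bounds $\hat{Z}_i$ whose generic fibres are pure Tate; these need not be Schubert varieties, and the fibres of a representative of $\CZ$ over the diagonals are fusions of products of these generic fibres which in general carry no affine paving, no cell structure and no Bott--Samelson-type resolution. Your parenthetical ``automatic over $\ol k$'' therefore only covers the Schubert-variety situation of (c), not the statement of (b). The paper's proof of (b) is exactly the specialization input that is missing from your proposal: the decomposition of the diagonal of the general fibre specializes to the fibres over the diagonals by \cite[Th\'eor\`eme 2.3]{CoPi}, and one proceeds by induction on $n$ (first over $\wt C$, then restricting $\CZ$ to $x_1\times \wt C$ inside $\wt C^2$, and so on). Without this, or a substitute for it, part (b) is unproved in your proposal, and with it the paper's own route from (b) to (c) is unavailable to you, which is why your (c) has to carry the extra resolution-theoretic and Leray--Hirsch weight discussed above.
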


\section*{Acknowledgment}
We would like to express our deep appreciation to G. Ancona, J. Ayoub, B. Kahn, and L. Migliorini for useful discussions and explanations. We are thankful to C. Voisin for comments and explanations related to the degeneration method.\\
The first named author is grateful to Chia-Fu Yu for his kind support.\\
The second named author would like to thank L. Barbieri Viale for steady encouragements.\\
This paper has emerged out of the notes of a lecture given by the second named author during winter semester 2017-18 at math department of Osnabr\"uck university, and for that she wants to thank the organizers. Especially she gratefully acknowledges O. R\"ondig for his kind support.

\tableofcontents

\section{Notations and Conventions}\label{Notation and Conventions}
Throughout this article we denote by
\begin{tabbing}

$\genericG_\nu:=\FG\times_C\Spec Q_\nu,$\; \=\kill

$k$\> is a perfect field,\\[1mm]

$\textbf{Sch}_k$ (resp. $\textbf{Sm}_k$)\> the category of schemes (resp. smooth schemes) of finite type over $k$,\\[1mm]

$\BF_q$\> a finite field with $q$ elements of characteristic $p$,\\[1mm]
$C$\> a smooth projective geometrically irreducible curve over $\BF_q$,\\[1mm]
$Q:=\BF_q(C)$\> the function field of $C$,\\[1mm]

$\BaseOfD$\> a finite field containing $\BF_q$,\\[1mm]

$\wh A:=\BF\dbl z\dbr$\>  the ring of formal power series in $z$ with coefficients in $\BF$ ,\\[1mm]
$\wh Q:=\Quot(\wh A)$\> its fraction field,\\[1mm]

$\nu$\> a closed point of $C$, also called a \emph{place} of $C$,\\[1mm]
$\BF_\nu$\> the residue field at the place $\nu$ on $C$,\\[1mm]

$\wh A_\nu$\> the completion of the stalk $\CO_{C,\nu}$ at $\nu$,\\[1mm]
$\wh Q_\nu:=\Quot(\wh A_\nu)$\> its fraction field,\\[1mm]


\Verkuerzung
{
\\[1mm]
}
{}
\Verkuerzung
{
\noindent
}
\noindent
$\genericG_\nu:=\FG\times_C\Spec \wh Q_\nu,$\; \=\kill
$n\in\BN_{>0}$\> a positive integer,\\[1mm]
$\ul \nu:=(\nu_i)_{i=1\ldots n}$\> an $n$-tuple of closed points of $C$,\\[1mm]
$\BA^\ul\nu$\> the ring of integral adeles of $C$ outside $\ul\nu$,\\[1mm]
$\wh A_\ul\nu$\> the completion of the local ring $\CO_{C^n,\ul\nu}$ of $C^n$ at the closed point $\ul\nu=(\nu_i)$,\\[1mm]
$\Nilp_{\wh A_\ul\nu}:=\Nilp_{\Spf \wh A_\ul\nu}$\> \parbox[t]{0.79\textwidth}{the category of schemes over $C^n$ on which the ideal defining the closed point $\ul\nu\in C^n$ is locally nilpotent,}\\[2mm]
$\Nilp_{\BaseOfD\dbl\zeta\dbr}$\lang{$:=\Nilp_{\hat\BD}$}\> \parbox[t]{0.79\textwidth}{the category of $\BD$-schemes $S$ for which the image of $z$ in $\CO_S$ is locally nilpotent. We denote the image of $z$ by $\zeta$ since we need to distinguish it from $z\in\CO_\BD$.}\\[2mm]
$\FG$\> a smooth affine group scheme of finite type over $C$,\\[1mm]
$G$\> generic fiber of $\FG$,\\[1mm]
$\BP_\nu:=\FG\times_C\Spec \wh A_\nu,$ \> the base change of $\FG$ to $\Spec \wh A_\nu$,\\[1mm]
$\genericG_\nu:=\FG\times_C\Spec \wh Q_\nu,$ \> the generic fiber of $\BP_\nu$ over $\Spec Q_\nu$,\\[1mm]

$\BP$\> a smooth affine group scheme of finite type over $\BD=\Spec\BaseOfD\dbl z\dbr$,\\[1mm] 
$\genericG$\> the generic fiber of $\BP$ over $\Spec\BaseOfD\dpl z\dpr$.
\end{tabbing}

\noindent
Let $S$ be an $\BF_q$-scheme and consider an $n$-tuple $\ul s:=(s_i)_i\in C^n(S)$. We denote by $\Gamma_\ul s$ the union $\bigcup_i \Gamma_{s_i}$ of the graphs $\Gamma_{s_i}\subseteq C_S$. \\

\noindent

\noindent
We denote by $\sigma_S \colon  S \to S$ the $\BF_q$-Frobenius endomorphism which acts as the identity on the points of $S$ and as the $q$-power map on the structure sheaf. Likewise we let $\hat{\sigma}_S\colon S\to S$ be the $\BaseOfD$-Frobenius endomorphism of an $\BaseOfD$-scheme $S$. We set
\begin{tabbing}
$\genericG_\nu:=\FG\times_C\Spec Q_\nu,$\; \=\kill
$C_S := C \times_{\Spec\BF_q} S$ ,\> and \\[1mm]
$\sigma := \id_C \times \sigma_S$.
\end{tabbing}

\noindent
Let $H$ be a sheaf of groups (for the \'etale topology) on a scheme $X$. In this article a (\emph{right}) \emph{$H$-torsor} (also called an \emph{$H$-bundle}) on $X$ is a sheaf $\CG$ for the \'etale topology on $X$ together with a (right) action of the sheaf $H$ such that $\CG$ is isomorphic to $H$ on a \'etale covering of $X$. Here $H$ is viewed as an $H$-torsor by right multiplication. \\

\noindent
For $X$ in $\textbf{Sch}_k$, let $Ch_i(X)$ and $Ch^i(X)$ denote Fulton's $i$-th Chow groups and let $Ch_\ast(X):=\oplus_i Ch_i(X)$ (resp. $Ch^\ast(X):=\oplus_i Ch^i(X)$).  \\

For $X$ in $\textbf{Sch}_k$, $H^i(X)$ denotes $H^i(\ol X,\BQ_\ell)$ where $\ol X = X \otimes \ol k$ and $\ell\neq p$ is a prime number. We denote $H_i^{BM}
(X)$ the Borel-Moore homology
theory companion to $H^i(X)$. Note that there
is a natural cycle class map $cl : Ch_\ast(X) \to H_{2\ast}^{BM}(X)$.

\noindent
To denote the motivic categories over $k$, such as 

$$
\textbf{DM}_{gm}(k),~ \textbf{DM}_{gm}^{eff}(k),~ \textbf{DM}_{-}^{eff}(k),~\text{etc.} 
$$  
and the functors $M:\textbf{Sch}_k\rightarrow \textbf{DM}_{gm}^{eff}(k)$ and $M^c:\textbf{Sch}_k\rightarrow \textbf{DM}_{gm}^{eff}(k)$ we use the same notation that was introduced  in \cite{VV}. 
We assume coefficients in $\BQ$. 
\\

For the definition of the geometric motives with compact support in positive characteristic we also refer to \cite[Appendix B]{H-K}.\\

\section{Preliminaries}\label{SectPreliminaries}

Recall that a Shimura data $(\BG,X,K)$ consists of a reductive group $\BG$ over $\BQ$ with center $Z$, $\BG(\BR)$-conjugacy class $X$ of homomorphisms $\BS\to G_\BR$ for the Deligne torus $\BS$ and a compact open sub-group $K\subseteq \BG(\BA_f)$, subject to certain conditions; see \cite{MilneShimura}. 

In definition-remark \ref{Def-RemNblaH} we will recall the  analogues picture over function fields. Let us first recall the notion of \emph{local boundedness condition}, introduced in \cite[Definition~4.8]{AH_Local}. To this purpose we first recall some technical background preliminaries. 

\bigskip

The \emph{group of positive loops associated with $\BP$} is the infinite dimensional affine group scheme $L^+\BP$ over $\BaseOfD$ whose $R$-valued points for an $\BaseOfD$-algebra $R$ are $L^+\BP(R):=\BP(R\dbl z\dbr):=\BP(\BD_R)$.
The \emph{group of loops (resp. positive loops) associated with $\genericG$ (resp. $\BP$)} is the $\fpqc$-sheaf of groups $L\genericG$ (resp.  is the infinite dimensional affine group scheme $L^+\BP$ over $\BaseOfD$) over $\BaseOfD$ whose $R$-valued points for an $\BaseOfD$-algebra $R$ are $L\genericG(R):=\genericG(R\dpl z\dpr):=\genericG(\dot{\BD}_R):=\Hom_{\dot\BD}(\dot\BD_R,\genericG)$ (resp. $L^+\BP(R):=\BP(R\dbl z\dbr):=\BP(\BD_R)$)
where we write $R\dpl z\dpr:=R\dbl z \dbr[\frac{1}{z}]$ and $\dot{\BD}_R:=\Spec R\dpl z\dpr$. It is representable by an ind-scheme of ind-finite type over $\BaseOfD$; see \cite[\S\,1.a]{PR2}.
Let $\scrH^1(\Spec \BaseOfD,L^+\BP)\,:=\,[\Spec \BaseOfD/L^+\BP]$ (respectively $\scrH^1(\Spec \BaseOfD,L\genericG)\,:=\,[\Spec \BaseOfD/L\genericG]$) denote the classifying space of $L^+\BP$-torsors (respectively $L\genericG$-torsors). It is a stack fibered in groupoids over the category of $\BaseOfD$-schemes $S$ whose category $\scrH^1(\Spec \BaseOfD,L^+\BP)(S)$ consists of all $L^+\BP$-torsors (resp.\ $L\genericG$-torsors) on $S$. The inclusion of sheaves $L^+\BP\subset L\genericG$ gives rise to the natural 1-morphism 
\begin{equation}\label{EqLoopTorsor}
\L\colon\scrH^1(\Spec \BaseOfD,L^+\BP)\longto \scrH^1(\Spec \BaseOfD,L\genericG),~\CL_+\mapsto \CL\,.
\end{equation}

The affine flag variety $\SpaceFl_\BP$ is defined to be the ind-scheme representing the $fpqc$-sheaf associated with the presheaf
$$
R\;\longmapsto\; L\genericG(R)/L^+\BP(R)\;=\;\BP\left(R\dpl z \dpr \right)/\BP\left(R\dbl z\dbr\right).
$$ 
on the category of $\BaseOfD$-algebras. Note that $\SpaceFl_\BP$ is ind-quasi-projective over $\BaseOfD$, and hence ind-separated and of ind-finite type over $\BaseOfD$, according to Pappas and Rapoport \cite[Theorem~1.4]{PR2}. Additionally, they show that the quotient morphism $L\genericG \to \SpaceFl_\BP$ admits sections locally for the \'etale topology. They proceed as follows. When $\BP = \SL_{r,\BD}$, the \fpqc-sheaf $\CF\ell_\BP$ is called the \emph{affine Grassmanian}. It is an inductive limit of projective schemes over $\BF$, that is, ind-projective over $\BF$; see \cite[Theorem~4.5.1]{B-D}.  By \cite[Proposition~1.3]{PR2} and \cite[Proposition~2.1]{AH_Global} there is a faithful representation $\BP\to\SL_r$ with quasi-affine quotient. Pappas and Rapoport show in the proof of \cite[Theorem~1.4]{PR2} that $\CF\ell_\BP \to \CF\ell_{\SL_r}$ is a locally closed embedding, and moreover, if $\SL_r /\BP$ is affine, then $\CF\ell_\BP \to \CF\ell_{\SL_r}$ is even a closed embedding and $\CF\ell_\BP$ is ind-projective. Moreover, if the fibers of $\BP$ over $\BD$ are geometrically connected, it was proved by Richarz \cite[Theorem A]{Richarz13} that $\CF\ell_\BP$ is ind-projective if and only if $\BP$ is a parahoric group scheme in the sense of Bruhat and Tits \cite[D\'efinition 5.2.6]{B-T}.\\

\noindent
Fix an algebraic closure $\BaseOfD\dpl\zeta\dpr^\alg$ of $\BaseOfD\dpl\zeta\dpr$. Since its ring of integers is not complete, we prefer to work with finite extensions of discrete valuation rings $R/\BaseOfD\dbl\zeta\dbr$ such that $R\subset\BaseOfD\dpl\zeta\dpr^\alg$. For such a ring $R$ we denote by $\kappa_R$ its residue field, and we let $\Nilp_R$ be the category of $R$-schemes on which $\zeta$ is locally nilpotent. We also set $\wh{\SpaceFl}_{\BP,R}:=\SpaceFl_\BP\whtimes_{\BaseOfD}\Spf R$ and $\wh{\SpaceFl}_\BP:=\wh{\SpaceFl}_{\BP,\BaseOfD\dbl\zeta\dbr}$. Before we can define (local) ``bounds'' let us make the following definition.

\begin{definition}\label{DefBDLocal}
\begin{enumerate}
\item \label{DefBDLocal_A}
For a finite extension of discrete valuation rings $\BaseOfD\dbl\zeta\dbr\subset R\subset\BaseOfD\dpl\zeta\dpr^\alg$ we consider closed ind-subschemes $\wh Z_R\subset\wh{\SpaceFl}_{\BP,R}$. We call two closed ind-subschemes $\wh Z_R\subset\wh{\SpaceFl}_{\BP,R}$ and $\wh Z'_{R'}\subset\wh{\SpaceFl}_{\BP,R'}$ \emph{equivalent} if there is a finite extension of discrete valuation rings $\BaseOfD\dbl\zeta\dbr\subset\wt R\subset\BaseOfD\dpl\zeta\dpr^\alg$ containing $R$ and $R'$ such that $\wh Z_R\whtimes_{\Spf R}\Spf\wt R \,=\,\wh Z'_{R'}\whtimes_{\Spf R'}\Spf\wt R$ as closed ind-subschemes of $\wh{\SpaceFl}_{\BP,\wt R}$.

\item
Let $\wh Z=[\wh Z_R]$ be an equivalence class of closed ind-subschemes $\wh Z_R\subset\wh{\SpaceFl}_{\BP,R}$ and let $G_{\wh Z}:=\{\gamma\in\Aut_{\BaseOfD\dbl\zeta\dbr}(\BaseOfD\dpl\zeta\dpr^\alg)\colon \gamma(\wh Z)=\wh Z\,\}$. We define the \emph{ring of definition $R_{\wh Z}$ of $\wh Z$} as the intersection of the fixed field of $G_{\wh Z}$ in $\BaseOfD\dpl\zeta\dpr^\alg$ with all the finite extensions $R\subset\BaseOfD\dpl\zeta\dpr^\alg$ of $\BaseOfD\dbl\zeta\dbr$ over which a representative $\wh Z_R$ of $\wh Z$ exists.

\item

We define a \emph{local bound} (LB) to be an equivalence class $\wh Z:=[\wh Z_R]$ of closed ind-subschemes $\wh Z_R\subset\wh{\SpaceFl}_{\BP,R}$, such that all the ind-subschemes $\wh Z_R$ are stable under the left $L^+\BP$-action on $\SpaceFl_\BP$, and the special fibers $Z_R:=\wh Z_R\whtimes_{\Spf R}\Spec\kappa_R$ are quasi-compact subschemes of $\SpaceFl_\BP\whtimes_{\BaseOfD}\Spec\kappa_R$. The ring of definition $R_{\wh Z}$ of $\wh Z$ is called the \emph{reflex ring} of $\wh Z$. 

\end{enumerate}
\end{definition}

\begin{remark}\label{RemSpecialFiber}
Note that the Galois descent for closed ind-subschemes of $\SpaceFl_\BP$ is effective, thus the representative $Z_R$ of a bound $\hat{Z}$ arise by base change from a unique closed subscheme $Z\subset\SpaceFl_\BP\whtimes_\BaseOfD\kappa_{R_{\wh Z}}$. We call $Z$ the \emph{special fiber} of the bound $\wh Z$. It is a quasi-projective scheme over $\kappa_{R_{\wh Z}}$, and even projective when $\BP$ is parahoric. 
\end{remark}

\begin{example}\label{DefIwahori-Weyl}
 Assume that the generic fiber $\genericG$ of $\BP$ over $\Spec\BaseOfD\dpl z\dpr$ is connected reductive. Consider the base change $\genericG_L$ of $\genericG$ to $L=\BaseOfD^\alg\dpl z\dpr$. Let $S$ be a maximal split torus in $\genericG_L$ and let $T$ be its centralizer. Since $\BaseOfD^\alg$ is algebraically closed, $\genericG_L$ is quasi-split and so $T$ is a maximal torus in $\genericG_L$. Let $N = N(T)$ be the normalizer of $T$ and let $\CT^0$ be the identity component of the N\'eron model of $T$ over $\CO_L=\BaseOfD^\alg\dbl z\dbr$.

The \emph{Iwahori-Weyl group} associated with $S$ is the quotient group $\wt{W}= N(L)\slash\CT^0(\CO_L)$. It is an extension of the finite Weyl group $W_0 = N(L)/T(L)$ by the coinvariants $X_\ast(T)_I$ under $I=\Gal(L^\sep/L)$:
$$
0 \to X_\ast(T)_I \to \wt W \to W_0 \to 1.
$$
By \cite[Proposition~8]{H-R} there is a bijection
\begin{equation}\label{EqSchubertCell}
L^+\BP(\BaseOfD^\alg)\backslash L\genericG(\BaseOfD^\alg)/L^+\BP(\BaseOfD^\alg) \isoto \wt{W}^\BP  \backslash \wt{W}\slash \wt{W}^\BP
\end{equation}
where $\wt{W}^\BP := (N(L)\cap \BP(\CO_L))\slash \CT^0(\CO_L)$, and where $LP(R)=P(R\dbl z\dbr)$ and $L^+\BP(R)=\BP(R\dbl z\dbr)$ are the loop group, resp.\ the group of positive loops of $\BP$; see \cite[\S\,1.a]{PR2}, or \cite[\S4.5]{B-D} when $\BP$ is constant. Let $\omega\in \wt{W}^\BP\backslash \wt{W}/\wt{W}^\BP$ and let $\BaseOfD_\omega$ be the fixed field in $\BaseOfD^\alg$ of $\{\gamma\in\Gal(\BaseOfD^\alg/\BaseOfD)\colon \gamma(\omega)=\omega\}$. There is a representative $g_\omega\in L\genericG(\BaseOfD_\omega)$ of $\omega$; see \cite[Example~4.12]{AH_Local}. The \emph{Schubert variety} $\CS(\omega)$ associated with $\omega$ is the ind-scheme theoretic closure of the $L^+\BP$-orbit of $g_\omega$ in $\SpaceFl_\BP\whtimes_{\BaseOfD}\BaseOfD_\omega$. It is a reduced projective variety over $\BaseOfD_\omega$. For further details see \cite{PR2} and \cite{Richarz}. Now we may take $\wh Z=\CS(\omega)\whtimes_\BaseOfD\Spf \BaseOfD\dbl\zeta\dbr$ for a \emph{Schubert variety} $\CS(\omega)\subseteq \CF\ell_\BP$ , with $\omega\in \wt{W}$; see \cite{PR2}.

\end{example}

Below we briefly recall the construction and basic properties of the algebraic stack Hecke and the stack of global $\FG$-shtukas.\\
\noindent
Let $\BF_q$ be a finite field with $q$ elements, let $C$ be a smooth projective geometrically irreducible curve over $\BF_q$, and let $\FG$ be a smooth affine group scheme of finite type over C. 
\noindent

\begin{definition-remark}\label{Def-RemH1} We let $\scrH^1(C,\FG)$ denote the category fibered in groupoids over the category of $\BF_q$-schemes, such that the objects over $S$, $\scrH^1(C,\FG)(S)$, are $\FG$-torsors over $C_S$ (also called $\FG$-bundles) and morphisms are isomorphisms of $\FG$-torsors.  One can prove that the stack $\scrH^1(C,\FG)$ is a smooth Artin-stack locally of finite type over $\BF_q$. Furthermore, it admits a covering $\{\scrH_\alpha^1\}_{\alpha}$ by connected open substacks of finite type over $\BF_q$. The proof for parahoric $\FG$ (with semisimple generic fiber) can be found in \cite[Proposition~1]{Heinloth} and for general case we refer to \cite[Theorem~2.5]{AH_Global}. For a proper closed subscheme $D$ of $C$ one defines \emph{$D$-level structure} on a $\FG$-bundle $\CG$ on $C_S$ to be a trivialization $\psi\colon \CG\times_{C_S}{D_S}\isoto \FG\times_C D_S$ along $D_S:=D\times_{\BF_q}S$. Let $\scrH_D^1(C,\mathfrak{G})$ denote the stack classifying $\FG$-bundles with $D$-level structure, that is, $\scrH_D^1(C,\mathfrak{G})$ is the category fibred in groupoids over the category of $\BF_q$-schemes, which assigns to an $\BF_q$-scheme $S$ the category whose objects are
$$
Ob\bigl(\scrH_D^1(C,\FG)(S)\bigr):=\left\lbrace (\CG,\psi)\colon \CG\in \scrH^1(C,\FG)(S),\, \psi\colon \CG\times_{C_S}{D_S}\isoto \FG\times_C D_S \right\rbrace,
$$ 
and whose morphisms are those isomorphisms of $\FG$-bundles that preserve the $D$-level structure.

\end{definition-remark}

\noindent
Let us recall the definition of the (unbounded ind-algebraic) Hecke stacks.
\begin{definition}\label{Def_Hecke}
Let $Hecke_{n,D}(C,\FG)$ be the stack fibered in groupoids over the category of $\BF_q$-schemes, whose $S$ valued points are tuples $\bigl((\CG,\psi),(\CG',\psi'),\ul\charsect,\tauGlob\bigr)$ where
\begin{itemize}
\item[--] $(\CG,\psi)$ and $(\CG',\psi')$ are in $\scrH_D^1(C,\FG)(S)$,
\item[--] $\ul\charsect:=(\charsect_i)_i \in (C\setminus D)^n(S)$ are sections, and
\item[--] $\tau\colon  \CG_{|_{{C_S}\setminus{\Gamma_{\ul\charsect}}}}'\isoto \CG_{|_{{C_S}\setminus{\Gamma_{\ul\charsect}}}}$ is an isomorphism preserving the $D$-level structures, that is, $\psi\circ\tauGlob=\psi'$.
\end{itemize}
If $D=\emptyset$ we will drop it from the notation. Note that forgetting the isomorphism $\tau$ defines a morphism 
\begin{equation}\label{EqFactors}
Hecke_{n,D}(C,\FG)\to \scrH_D^1(C,\FG)\times \scrH_D^1(C,\FG) \times (C\setminus D)^n.
\end{equation}

\end{definition}

\begin{definition-remark}\label{Def-RemHecke}
\begin{enumerate}
\item\label{indstructureonhecke}
A choice of faithful representation  $\rho\colon  \FG \to \SL(\CV)$ with quasi-affine (resp.\ affine) quotient $\SL(\CV)\slash \FG$, induces an ind-algebraic structure $$\dirlim Hecke_n^{(\rho,\ul\omega)}(C,\FG)$$ on the stack $Hecke_n(C,\FG)$, which is relatively representable over $\scrH^1(C,\FG)\times_{\BF_q}C^n$ by an ind-quasi-projective (resp. ind-projective) morphism. Note that the limit is taken over n-tuples of coweights $\ul\omega=(\omega_i)$ of $\SL(\CV)$. For details see \cite[Definition~3.9 and Proposition 3.10]{AH_Global}. Note that comparing to \cite[Definition~3.9 and Proposition 3.10]{AH_Global} there is a minor change in our notation, which is intended to illustrate the dependence to the choice of representation $\rho$.

\item\label{globaffGr}
The \emph{global affine Grassmannian} $GR_n(C,\FG)$ is the stack fibered in groupoids over the category of $\BF_q$-schemes, whose $S$-valued points are tuples $(\CG,\ul\charsect,\epsilon)$, where $\CG$ is a $\FG$-bundle over $C_S$, $\ul\charsect:=(s_i)_i \in C^n(S)$ and $\epsilon\colon \CG|_{C_S\setminus \Gamma_\ul s}\isoto \FG \times_C (C_S\setminus \Gamma_\ul s)$ is a trivialization. When the curve $C$ and the group $\FG$ are obvious from the context we drop them from notation and write  $GR_n:=GR_n(C,\FG)$. Notice that the global affine Grassmannian $GR_n$ is isomorphic to the fiber product $Hecke_n(C,\FG)\times_{\,\scrH^1(C,\FG),\FG}\Spec\BF_q$ under the morphism sending $(\CG,\ul\charsect,\epsilon)$ to $(\FG_S,\CG,\ul\charsect,\epsilon^{-1})$. Hence, after we fix a faithful representation $\rho\colon\FG\into\SL(\CV)$ and coweights $\ul\omega$, the ind-algebraic structure on $Hecke_n(C,\FG)$, induces an ind-quasi-projective ind-scheme structure on $GR_n$ over $C^n$.
\forget{
\item\label{heckeforparahoricG}
When the group $\FG$ is parahoric then the ind-algebraic stack $Hecke_n(C,\FG)$ is ind-projective over $C^n\times_{\BF_q}\scrH^1(C,\FG)$. This follows from  \cite{Richarz13} Theorem 1.18. 
}
\item
The group of (positive) loops $\FL_n \FG$ (resp. $\FL_n^+\FG$) of $\FG$ is an ind-scheme (resp. a scheme) representing the functor whose $R$-valued points consist of tuples $(\ul \charsect, \gamma)$ where $\ul\charsect:=(s_i)_i\in C^n(\Spec R)$ and $\gamma\in \FG(\dot{\BD}(\Gamma_\ul s))$(resp. $\gamma\in \FG(\BD(\Gamma_\ul s))$). The projection $(\ul s,\gamma)\mapsto \ul s$ defines morphism $\FL_n\FG\to C^n$ (resp.~ $\FL_n^+\FG\to C^n$). By the general form of the descent lemma of Beauville-Laszlo \cite[Theorem 2.12.1]{B-L}, the map which sends $(\CG, \ul s, \epsilon)\in GR_n(S)$ to the triple $(\ul s,\wh\CG:=\CG|_{\BD(\Gamma_\ul s)}, \dot{\epsilon}:=\epsilon|_{\dot{\BD}(\Gamma_\ul s)})$ is bijective. Thus the loop groups $\FL_n\FG$ and $\FL_n^+ \FG$ act on $GR_n$ by changing the trivialization on $\dot{\BD}(\Gamma_\ul s)$.

\item\label{DefAltGlobalBC_A} 
We fix an algebraic closure $Q^\alg$ of the function field $Q:=\BF_q(C)$ of the curve $C$. For a finite field extension $Q\subset K$ with $K\subset Q^\alg$ we consider the normalization $\wt C_K$ of $C$ in $K$. It is a smooth projective curve over $\BF_q$ together with a finite morphism $\wt C_K\to C$.
For a finite extension $K$ as above, we consider closed ind-subschemes $Z$ of $GR_n\times_{C^n} \wt C_K^n$.
We call two closed ind-subschemes $Z_1\subseteq GR_n\times_{C^n} \wt C_{K_1}^n$ and $Z_2\subseteq GR_n\times_{C^n} \wt C_{K_2}^n$ \emph{equivalent} if there is a finite field extension $K_1.K_2\subset K'\subset Q^\alg$ with corresponding curve $\wt C_{K'}$ finite over $\wt C_{K_1}$ and $\wt C_{K_2}$, such that $Z_1 \times_{\wt C_{K_1}^n} \wt C_{K'}^n=Z_2 \times_{\wt C_{K_2}^n} \wt C_{K'}^n$ in $GR_n\times_{C^n} C_{K'}^n$. 

\item\label{DefAltGlobalBC_B}
Let $\CZ=[Z_K]$ be an equivalence class of closed ind-subschemes $Z_K \subseteq GR_n\times_{C^n} \wt C_K^n$ and let $G_\CZ:= \{g \in \Aut(Q^\alg/Q): g^\ast(\CZ)=\CZ\}$. We define the \emph{field of definition} $Q_\CZ$ of $\CZ$ as the intersection of the fixed field of $G_\CZ$ in $Q^\alg$ with all the finite extensions over which a representative of $\CZ$ exists.

\item\label{DefAltGlobalBC_C}
We define a \emph{global bound} (GB) to be an equivalence class $\CZ:=[Z_K]$ of closed subscheme $Z_K\subset GR_n\times_{C^n} \wt C_K^n$, such that all the ind-subschemes $Z_K$ are stable under the left $\FL_n^+\FG$-action on $GR_n$. The field of definition $Q_\CZ$ (resp. the curve of definition $C_\CZ:=\wt C_{Q_\CZ}$) of $\CZ$ is called the \emph{reflex field} (resp. \emph{reflex curve}) of $\CZ$.

\item\label{heckeislocallGr}
Consider the stacks $Hecke_n(C,\FG)$ and $GR_n\times \scrH^1(C,\FG)$ as families over $C^n \times \scrH^1(C,\FG)$, via the projections $(\CG,\CG',\ul s,\tauGlob)\mapsto (\ul s,\CG')$ and $(\wt{\CG},\ul s,\wt\tauGlob)\times \CG'\mapsto (\ul s,\CG')$ respectively. They are locally isomorphic with respect to the \'etale topology on $C^n\times \scrH^1(C,\FG)$. See \cite[Proposition~2.0.11]{AH_LM}, which is a slight generalization of \cite[Lemma 4.11]{Var}.

\end{enumerate}
\end{definition-remark}

Assume that we have two morphisms $f,g\colon X\to Y$ of schemes or stacks. We denote by $\equi(f,g\colon X\rightrightarrows Y)$ the pull back of the diagonal under the morphism $(f,g)\colon X\to Y\times_\BZ Y$, that is $\equi(f,g\colon X\rightrightarrows Y)\,:=\,X\times_{(f,g),Y\times Y,\Delta}Y$ where $\Delta=\Delta_{Y/\BZ}\colon Y\to Y\times_\BZ Y$ is the diagonal morphism.

\noindent
Below we recall the construction and basic properties of the stack of global $\FG$-shtukas.

\begin{definition-remark}\label{Def-RemNblaH}
\item[a)]
We define the \emph{moduli stack} $\nabla_n\scrH_D^1(C,\mathfrak{G})$ \emph{of global $\FG$-shtukas with $D$-level structure} to be the preimage in $Hecke_{n,D}(C,\FG)$ of the graph of the Frobenius morphism on $\scrH^1(C,\FG)$. In other words
$$
 \nabla_n\scrH_D^1(C,\mathfrak{G}):=\equi(\sigma_{\scrH_D^1(C,\mathfrak{G})} \circ pr_1,pr_2\colon  Hecke_{n,D}(C,\FG)\rightrightarrows \scrH_D^1(C,\FG)),
$$ 
where $pr_i$ are the projections to the first, resp.\ second factor in \eqref{EqFactors}. Each object $\ul{\cG}$ of $\nabla_n\scrH_D^1(C,\FG)(S)$ is called a \emph{global $\FG$-shtuka with $D$-level structure over $S$} and the corresponding sections $\ul\charsect:=(\charsect_i)_i$ are called the \textit{characteristic sections} (or simply \textit{characteristics}) of $\ul \CG$.\\ 
\noindent
\item[b)]
(ind-algeraic structure on $\nabla_n \scrH_D^1(C, \FG)$)
Let $D$ be a proper closed subscheme of $C$. The ind-algebraic structure on $Hecke$ induces an ind-algebraic structure $\nabla_n \scrH_D^1(C, \FG)=\dirlim\nabla_n^{(\rho, \ul \omega)} \scrH_D^1(C, \FG)$ over $(C\setminus D)^n$ which is ind-separated and locally of ind-finite type. The stacks $\nabla_n^{(\rho, \ul \omega)} \scrH_D^1(C, \FG)$ are Deligne-Mumford. Moreover, the forgetful morphism $$\nabla_n \scrH_D^1(C, \FG)\rightarrow \nabla_n \scrH^1(C, \FG)\times_{C^n}(C\setminus D)^n$$ is surjective and a torsor under the finite group $\FG(D)$. See \cite[Theorem 3.15]{AH_Global}.

\end{definition-remark}

\section{Discussion On The Motives Of The Stacks Of Shtukas and Their Local Models}\label{SectMotiveOfModuliOfShtukas}

In this section we first study the motive of the Rapoport-Zink type local model for the stack of $\FG$-shtukas. For this we first need to explain some necessary motivic background. We then discuss the motivic categories over the moduli stacks of $\FG$-shtukas and we further continue by discussing their intersection motives. Finally we study the motive of the local models in the global setup.

\subsection{Motives Of R-Z Local Models and Semi-simplicity Of Frobenius}\label{SubSectSSFrobLM}

Recall that Grothendieck in 1960's revealed his significant observation by proposing a unifying method to describe the essence of different cohomology theories. In his article \cite{Gro}, he further proposed several conjectures, called standard conjectures, which in particular imply that his construction of the category of (pure) motives gives a semi-simple abelian category. He further observed that these conjectures imply the Riemann hypothesis part of the Weil conjectures. 

\noindent
Let $X$ be a smooth projective variety over a finite field $\BF_q$ of characteristic $p$ and let $\ell$ be a prime number different from $p$. Believing the standard conjectures, one can see that the action of the Frobenius on the \'etale cohomology $H_{\text{\'et}}^i(X_{\ol{\BF}_q},\BQ_\ell)$ is semisimple. For the case of abelian varieties this follows from the Weil's work on the Riemann hypothesis. Namely, fixing a  polarization of $\CA$ induces a Rosati involution $\phi\mapsto\phi^\dagger$ on the endomorphism ring $\BQ\mathrm{End}(\CA):= \mathrm{End} (\CA)\otimes \mathbb{Q}$, which further induces a bilinear $\phi\mapsto \Tr(\phi\phi^\dagger)$ form on sub-$\BQ$-algebra $F:=\BQ[\pi]\subseteq\BQ\mathrm{End}(\CA)$ generated by the gemotric Frobenius $\pi$, where the latter is positive definite according to the Riemann hypothesis. For general case, to formulate this in terms of the positivity of a bilinear form at the level of cohomology, one needs standard conjecture of Lefschetz and Hodge type. As another evidence for semi-simplicity of Frobenius, one could mention the case of $K3$ surfaces. This follows from results of Deligne; see \cite{DeligneWeilConjK3}. \\


There are also other sources of evidences, for which, the semi-simplicity conjecture can be justified in more elementary way. For example recall that according to $\BA^1$-homotopy theory, one may collapse affine sub-spaces of a variety. This naturally suggests to consider those varieties that can be paved by affine spaces and further to reduce more subtle cases to this case using tools such as decomposition theorem (in the sense of \cite{BBD} and also in the motivic sense of \cite{CoHa} and \cite{C-M}), Leray-Hirsch theorem and etc. \forget{For this part we use the machinery of motives and  weight (or slice) filtration. Our approach relies on \cite{H-K} and the authors previous work \cite{AH17}.\\  

In this case one can precisely describe the action on the on the cohomology of $X$. In particular this is the case when $X$ is cellular. } Recall that

\begin{definition}
i)
We say that $X \in \textbf{Sch}_k$ is \emph{relatively cellular} if it admits a filtration by its closed subschemes: 

\begin{equation}\label{EqRelCell}
\emptyset = X_{-1} \subset X_0 \subset \dots \subset X_n = X  
\end{equation}

such that $U_i := X_i \setminus X_{i-1} \to Y_i$ is an affine bundle of relative dimension $d_i$.\\
ii) We say that $X$ is cellular if $Y_i=\Spec k$ for all $i$. 

\end{definition}

\noindent
Let us recall that the motive of a relative cellular variety filtered as above, admit a decomposition as sum of motives $M(Y_i)$ with relevant shift and twist. 

\begin{proposition}\label{Prop_DecompRelCel}
i) 
Assume that $X\in \textbf{Sch}_k$ is relatively cellular with a filtration as in (\ref{EqRelCell}). Then we have the following decomposition
$$
M^c(X)=\bigoplus_i M^c(Y_i)(d_i)[2d_i].
$$

In particular when $X$ is cellular then we have

$$
M^c(X)=\bigoplus_i\BZ(d_i)[2d_i].\\
$$

ii) Similarly when $X$ is cellular then $\pi_!(\BQ_\ell)$ is direct sum of the $IC$-sheaves of the form $\BQ_\ell(m)[n]$. Here $\pi$ denotes the structure morphism to $k$. 
\end{proposition}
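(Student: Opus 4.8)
The plan is to prove part (i) by induction on the length $n$ of the filtration, using the localization triangle for motives with compact support. For the base case $n = 0$ we have $X = U_0 \to Y_0$ an affine bundle of relative dimension $d_0$, and by homotopy invariance (more precisely, the projective bundle/affine bundle formula for $M^c$, see \cite{VV}) we get $M^c(X) = M^c(Y_0)(d_0)[2d_0]$. For the inductive step, suppose the statement holds for filtrations of length $< n$. Since $X_{n-1} \subset X_n = X$ is a closed subscheme with open complement $U_n = X \setminus X_{n-1}$, there is a distinguished triangle
\begin{equation*}
M^c(X_{n-1}) \longto M^c(X) \longto M^c(U_n) \longto M^c(X_{n-1})[1]
\end{equation*}
in $\textbf{DM}_{gm}^{eff}(k)$ (resp.\ $\textbf{DM}_{-}^{eff}(k)$). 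By the inductive hypothesis $M^c(X_{n-1}) = \bigoplus_{i<n} M^c(Y_i)(d_i)[2d_i]$, and by the affine bundle formula $M^c(U_n) = M^c(Y_n)(d_n)[2d_n]$. The key point is then that this triangle splits: one shows the connecting morphism $M^c(U_n) \to M^c(X_{n-1})[1]$ vanishes.

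The splitting is the step I expect to be the main (and only real) obstacle, so I would address it carefully. One clean way: argue by weights/slices, observing that each summand $M^c(Y_i)(d_i)[2d_i]$ of $M^c(X_{n-1})$ and each summand of $M^c(U_n)$ is built out of Tate twists of motives of the $Y_i$ in such a way that, after passing to the slice filtration of \cite{H-K}, the connecting map is forced to be zero for degree/twist reasons; alternatively, and more elementarily, one can invoke the standard fact (going back to the cellular decomposition arguments, cf.\ the treatment in \cite{VV} and \cite{AH17}) that for a filtration by closed subschemes whose successive complements are affine bundles, the associated localization triangles split compatibly, so that $M^c(X) \cong \bigoplus_i M^c(Y_i)(d_i)[2d_i]$. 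Either route gives the displayed decomposition. Taking $Y_i = \Spec k$ for all $i$ yields $M^c(X) = \bigoplus_i \BZ(d_i)[2d_i]$, which is part (i)'s cellular case.

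For part (ii), with $X$ cellular and $\pi\colon X \to \Spec k$ the structure morphism, the plan is to transport the above motivic decomposition to the constructible derived category. By the cellular decomposition, $X$ is smooth (each stratum is an affine bundle over $\Spec k$, hence smooth, and smoothness of a cellular variety follows), so $\pi$ is smooth and proper when $X$ is proper; in general one uses that $R\pi_!\BQ_\ell$ is computed by the same stratification. Applying $R\pi_!$ to the filtration and using the affine bundle formula $R(\pi_i)_!\BQ_\ell = \BQ_\ell(-d_i)[-2d_i]$ on each stratum $U_i \to \Spec k$, together with the splitting of the associated triangles in $D_c^b(\Spec k, \BQ_\ell)$ (again for weight reasons: the stalks are pure Tate in distinct degrees), one gets $R\pi_!\BQ_\ell \cong \bigoplus_i \BQ_\ell(-d_i)[-2d_i]$, a direct sum of (shifted, twisted) $IC$-sheaves on the point. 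The main obstacle here is again the splitting of the triangles, but over $\Spec k$ this is immediate since the successive cohomology sheaves live in distinct cohomological degrees, so $\Ext^1$ between them vanishes. I would also remark that the realization functor is compatible with the localization triangles, so part (ii) is in fact the $\ell$-adic realization of part (i).
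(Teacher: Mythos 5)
Your overall skeleton (induction along the filtration, the localization triangle for $M^c$, the affine bundle formula for the open stratum) is the same as the paper's, but the one step you yourself flag as the real obstacle -- the splitting of the triangle -- is exactly where your proposal has a genuine gap in the general relatively cellular case. The connecting morphism has components in
$\Hom_{\textbf{DM}}\bigl(M^c(Y_n)(d_n)[2d_n],\,M^c(Y_i)(d_i)[2d_i+1]\bigr)$,
and for arbitrary smooth bases $Y_i$ over an arbitrary perfect field these groups are higher cycle groups of the $Y_i$'s (already $CH^1(-,1)_\BQ$, i.e.\ units, contributes) and do \emph{not} vanish for ``degree/twist reasons''; the slice filtration does not force them to be zero. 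Your vanishing argument is only valid in the cellular case $Y_i=\Spec k$, where the relevant motivic cohomology of the field vanishes. Your alternative route -- ``invoke the standard fact that the localization triangles split'' -- is circular, since that splitting is precisely the content of the proposition. The paper does something different and more robust: it does not try to show the connecting map is zero a priori, but constructs an explicit section of $g_j\colon M^c(X_j)\to M^c(U_j)$. The closure of the graph of the bundle projection $p_j\colon U_j\to Y_j$ inside $X_j\times Y_j$ is a cycle of dimension $\dim X_j$, and since $Y_j$ is smooth this cycle defines, by Voevodsky's identification of such Hom groups with cycle groups (\cite[Chap.~5, Theorem~4.2.2(3) and Proposition~4.2.3]{VV}), a morphism $\gamma_j\colon M^c(Y_j)(d_j)[2d_j]\to M^c(X_j)$ with $g_j\circ\gamma_j=p_j^\ast$; as $p_j^\ast$ is an isomorphism by homotopy invariance, the triangle splits and the induction closes. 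To repair your proof you need this (or an equivalent) construction of a splitting correspondence; the weight/slice argument cannot replace it.

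Two smaller remarks. First, in part (ii) your claim that the triangles over $\Spec k$ split ``since the successive cohomology sheaves live in distinct cohomological degrees, so $\Ext^1$ vanishes'' is not quite right when cells of equal dimension occur in different strata: over a finite field $\Ext^1(\BQ_\ell,\BQ_\ell)=H^1(\Gal(\ol k/k),\BQ_\ell)\neq 0$, so the relevant Hom group need not vanish on degree grounds alone. Your closing remark -- that (ii) is the $\ell$-adic realization of (i), using compatibility of realization with the localization triangles -- is the safer way to conclude, and is in the spirit of the paper's (equally terse) treatment of (ii). Second, your induction on the length of the filtration versus the paper's induction on $\dim X$ is an immaterial difference.
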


\begin{proof}
i) We prove by induction on $\dim X$. Consider the following distinguished triangle
$$
M^c(X_{j-1}) \rightarrow M^c(X_j) \xrightarrow{\;g_j} M^c(U_j) \rightarrow M^c(X_{j-1})[1].
$$
The closure of the graph of $p_j : U_j \rightarrow Y_j$ in $X_j \times Y_j$. This defines a cycle in $CH_{dim~X_j}(X_j\times Y_j)$ and since $Y_j$ is smooth this induces the following morphism 
$$
\gamma_j : M^c(Y_j)(d_j)[2d_j] \rightarrow M^c(X_j),
$$
by \cite[Chap. 5, Theorem 4.2.2.3) and Proposition 4.2.3]{VV}, such that $g_j \circ \gamma_j =p_j^\ast$. Thus the above distinguished triangle splits and we conclude by induction hypothesis.\\

ii)  By induction on dim $X$, we may assume that the statement holds for $X\setminus U$, for an open affine space $U$. The statement follows from splitting of the canonical distinguished triangle $\pi_!(\BQ_\ell|_U) \to \pi_!(\BQ_\ell) \to \pi!(\BQ_\ell|_{X\setminus U})$.

\end{proof}

\begin{remark}
It can be seen that projective homogeneous spaces admit cellular decomposition, e.g. see \cite{Koeck}. Another well-known class of examples, according to Bialynicki-Birula decomposition, comes from projective varieties with $\BG_m$-action.  Also toric varieties and wonderful compactification $\ol G$ of a split reductive group $G$, provides another important sub-specious of cellular varieties; see for example \cite[proposition 3.5]{AH17}.
\end{remark}

From the above proposition \ref{Prop_DecompRelCel} it is clear that the motive corresponding to a cellular schemes lies in the category of \emph{pure Tate motives}. Recall that

\begin{definition} 

An object of $\textbf{DM}_{gm}(k)$ is called pure Tate motive if it is a (finite) direct sum of copies of $\BZ(n)[2n]$ for $n\in \BZ$. 
\end{definition}

\noindent
To proceed further and study the geometry of more complicated schemes we first recall the slice filtration and motivic fundamental invariant according to \cite{H-K}.

Recall that a t-structure on a triangulated category $\cC$ is the data of two full subcategories $\cC^{\geq 0}$ and $\cC^\leq0$ satisfying the following properties: 
\begin{enumerate}
\item
(T1) If $\cC^{\geq n}=\cC^{\geq 0}[-n]$ and $\cC^{\leq n}=\cC^{\leq 0}[-n]$, then $\cC^{\geq 1}\subseteq \cC^{\geq 0}$ and $\cC^{\leq -1} \subseteq \cC^{\leq 0}$, 
\item
(T2) For any $\CF \in \cC^{\leq 0},G \in \cC^{\geq 1}$, $\Hom_\cC(\CF,\CG) = 0$,
\item
(T3) For any $\CF \in C$, there exists a distinguished triangle
\begin{equation}
\xymatrix
   {  
      \CF_0 \ar[rr] & & \CF\ar[dl]\\
     & \CF_1\ar[ul]^{[1]}  &   
   }
\end{equation}
 
such that $\CF_0 \in \cC^{\leq 0}$ and $\CF_1 \in \cC^{\geq 1}$. 
\end{enumerate}
\noindent
The heart of the t-structure is the full subcategory $\cC^{\leq 0} \cap \cC^{\geq0}$.

\noindent
Consider the inclusions of the full subcategories $i : \cC^{\leq n} \to \cC$(resp. $i': \cC^{\geq n} \to \cC$). Then, there exist a truncation functors $\tau^{\leq n} : \cC \to \cC^{\leq n}$ (resp. $\tau^{\geq n} : \cC \to \cC^{\geq n}$) and such that for any $Y \in \cC^{\leq n}$  (resp. $Y \in \cC^{\geq n}$) and any $X \in \cC$, we have isomorphisms $\Hom_{\cC\leq n}(Y,\tau^{\leq n} X) \to \Hom_\cC(i(Y),X)$ (resp. $\Hom_{\cC^{\geq n}}(\tau^{\geq n}X,Y ) \to \Hom_\cC(X,i'(Y ))$).

The category $\textbf{DM}_{-}^{eff}(k)$ has a (non degenerate) t-structure inherited from the derived category  $D^-(Shv_{Nis}(SmCor(k)))$ of Nisnevich sheaves whose heart
is the abelian category $HI(k)$ of homotopy invariant Nisnevich sheaves with
transfers on $Sm/k$. This t-structure is not the desired (conjectural) “motivic
t-structure” on $\textbf{DM}_{-}^{eff}(k)$ whose heart is the abelian category of (effective) mixed motives over $k$. For further details in this direction see \cite[Chapter~5]{VV}.

\noindent
Let us now recall the following alternative construction from \cite{H-K} of so-called slice filtration.

For $M$ in $\textbf{DM}_{-}^{eff}(k)$, one defines the following triangulated functor
$$
\nu^{\geqslant n} M :=\ul{\Hom}(\BZ (n),M)(n).
$$

\noindent
By adjunction there are morphisms $a^n: \nu^{\geqslant} M\to M$ and $f^n:\nu^{\geqslant n} M\to \nu^{\geqslant n-1} M$.
Where $a^n$ corresponds to identity via 
$$
\CD
\Hom(\ul\Hom(\BZ(n),M),\ul\Hom(\BZ(n),M))\\~~~~~~~~~~~~~~=\Hom(\ul\Hom(\BZ(n),M)(n),M)\\=\Hom(\nu^{\geqslant n} M,M)
\endCD
$$

and

$$
\CD
\Hom(\ul\Hom(\BZ(n),M),\ul\Hom(\BZ(n),M))\\=
\Hom(\ul\Hom(\BZ(n),M)\otimes\BZ(1),\Hom(\BZ(n-1),M))\\
=\Hom(\ul\Hom(\BZ(n),M)\otimes\BZ(n),\ul\Hom(\BZ(n-1),M)(n-1))\\=\Hom(\nu^{\geqslant n} M,\nu^{\geqslant n-1} M).
\endCD
$$
Accordingly

\begin{definition}\label{DefSliceFilt}
 Define $\nu_{< n}M = \nu_{\leq n-1}M$ and $\nu_nM$ as the objects given by the following triangles

\begin{equation}\label{Huber_Kahn_Filt}
\xymatrix
   {  
      \nu^{\geqslant n} M \ar[rr]^{a^n} & & M\ar[dl]\\
     & \nu_{< n} M\ar[ul]^{[1]}  &   
   }
\end{equation}

 The natural transformations $a_n : Id\to \nu_{<n}$ factor canonically through natural transformations $f_n : \nu_{<n+1} \to \nu_{<n}$, which allows to define

\begin{equation}\label{Huber_Kahn_Filt}
\xymatrix
   {  
      \nu_{<n+1} M \ar[rr]^{f_n} & & \nu_{<n}M\ar[dl]\\
     & \nu_{n}M\ar[ul]^{[1]}  &   
   }
\end{equation}

\noindent
Now define the following functors

 $$
c_n: \textbf{DM}_{gm}^{eff}(k) \to \textbf{DM}_{gm}^{eff}(k),
 $$

\noindent
where $\nu_n M:= c_n(M)(n)[2n]$. The object $c_n(M)$ is called the \emph{$n$'th motivic fundamental invariant} of $M$.  

\end{definition}


\begin{definition-remark}\label{DefRemCn}
\begin{enumerate}
\item
Consider the thick tensor subcategory of $\textbf{DM}_{gm}^{eff}(k)$ (resp. $\textbf{DM}_{-}^{eff}(k)$) generated by $\BZ(0)$. It is isomorphic to the full subcategory $\textbf{D}_f^b(\textbf{Ab})$ of the bounded derived category $\textbf{D}^b(\textbf{Ab})$ of abelian groups, consisting of objects with finitely generated cohomology groups; see \cite[Proposition 4.5.]{H-K}.

\item
Define the category of mixed-Tate motives $MT\textbf{DM}_{gm}^{eff}(k)$ as the thick tensor subcategory of $\textbf{DM}_{gm}^{eff}(k)$ generated by $\BZ(0)$ and the Tate object $\BZ(1)$. The above obvious embedding precisely factors through $i: \textbf{D}_f^b(\textbf{Ab})\to MT\textbf{DM}_{gm}^{eff}(k)$ (resp. $i: \textbf{D}^b(\textbf{Ab})\to MT\textbf{DM}_{-}^{eff}(k)$).

\item
The functors $\nu^{\leq n}$ and $\nu^{\geq n}$ restrict to functors $MT\textbf{DM}_{gm}^{eff}(k)\to MT\textbf{DM}_{gm}^{eff}(k)$. Note that in this situation the slice filtration coincides the usual weight filtration.   

\item
 Let $M \in \textbf{DM}_{gm}^{eff}(k)$ and $N \in MT\textbf{DM}_{gm}^{eff}(k)$. There is a K\"unneth isomorphism
$$
\bigoplus_{p +q=n}
c_p(M)\otimes c_q(N)\tilde{\to} c_n(M\otimes N).
$$

\end{enumerate}

\end{definition-remark}

\begin{proposition}\label{PropDecompPureTate}
Let $X$ be a smooth variety such that $M(X)$ is pure Tate. Then there is a natural isomorphism 
$$
M(X)\cong\bigoplus_p c_p(X)(p)[2p].
$$
\end{proposition}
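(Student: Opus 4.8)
The plan is to extract the splitting from the slice filtration directly, using that for a pure Tate motive the slices are essentially trivial. Concretely, write $M(X)=\bigoplus_{j\in S}\BZ(d_j)[2d_j]$ for some finite multiset $S$ of nonnegative integers (effectivity forces $d_j\geq 0$, and smoothness is what makes $M(X)$ an effective geometric motive to which the functors $c_p$ apply). First I would compute $\nu^{\geqslant n}$ on each summand: since $\ul\Hom(\BZ(n),\BZ(d)[2d])=\BZ(d-n)[2d]$ if $d\geq n$ and is zero otherwise (using cancellation in $\textbf{DM}_{gm}^{eff}(k)$, valid with $\BQ$-coefficients), one gets $\nu^{\geqslant n}M(X)=\bigoplus_{d_j\geq n}\BZ(d_j)[2d_j]$, and the canonical map $a^n\colon\nu^{\geqslant n}M(X)\to M(X)$ is the inclusion of the sub-sum of those summands with $d_j\geq n$. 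Hence from the defining triangle \eqref{Huber_Kahn_Filt} we read off $\nu_{<n}M(X)=\bigoplus_{d_j<n}\BZ(d_j)[2d_j]$, and then from the second triangle $\nu_n M(X)=\bigoplus_{d_j=n}\BZ(d_j)[2d_j]=\BZ(n)[2n]^{\oplus m_n}$ where $m_n=\#\{j: d_j=n\}$. By Definition~\ref{DefSliceFilt}, $\nu_n M(X)=c_n(M(X))(n)[2n]$, so $c_n(M(X))=\BZ(0)^{\oplus m_n}$, i.e. $c_p(X)$ is a direct sum of $m_p$ copies of $\BZ(0)$.

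Next I would assemble these pieces into the asserted isomorphism. We have $\bigoplus_p c_p(X)(p)[2p]=\bigoplus_p \BZ(p)[2p]^{\oplus m_p}$, and since $\sum_p m_p\cdot(\text{copies of }\BZ(p)[2p])$ is exactly the decomposition $\bigoplus_{j\in S}\BZ(d_j)[2d_j]$ regrouped by twist, this is (non-canonically) isomorphic to $M(X)$. To make the isomorphism natural — as the statement claims — I would instead argue that the maps $a_n\colon M\to\nu_{<n}M$ and the connecting maps in the slice triangles, being canonical, allow one to reconstruct $M$ from its slices whenever the slice filtration is finite and "splits", which is automatic here because each triangle involved has the form $A\to B\to C$ with $\Hom(C[-1],A)=0$ (a sum of copies of $\BZ(d)[2d]$ mapping to a sum of copies of $\BZ(d')[2d']$ with $d'<d$ has no nontrivial maps up to the relevant shift, by the cancellation/vanishing of $\Hom_{\textbf{DM}}(\BZ(d)[2d+1],\BZ(d')[2d'])$ for $d>d'$). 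Therefore each triangle $\nu^{\geqslant n}M\to M\to\nu_{<n}M$ canonically splits, and iterating downward from $n\gg 0$ (where $\nu_{<n}M=M$) to $n=0$ gives the canonical decomposition $M(X)\cong\bigoplus_p\nu_pM(X)=\bigoplus_p c_p(X)(p)[2p]$.

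The main obstacle is the word \emph{natural}: producing the isomorphism for a fixed $X$ is elementary once the slice computation is in hand, but showing it is compatible with morphisms (or at least canonical, independent of the chosen basis of the Tate decomposition) requires the $\Hom$-vanishing that forces each slice triangle to split \emph{uniquely}. This is where I would be careful: one needs $\Hom_{\textbf{DM}_{gm}^{eff}(k)}(\BZ(d)[2d],\BZ(d')[2d'-1])=0$ for $d\geq d'$, equivalently $\Hom(\BZ(d-d'),\BZ[-1])=0$, i.e. the relevant motivic cohomology group $H^{-1}_{\cM}(\Spec k,\BQ(d-d'))$ vanishes — true since motivic cohomology vanishes in negative weights and in the relevant degree range. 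Granting this vanishing (which also underlies Remark~\ref{DefRemCn} on the Künneth formula and the identification of the slice and weight filtrations on mixed Tate motives), the argument goes through, and one obtains the stated natural isomorphism, with $c_p(X)$ concentrated in a single cohomological degree and equal to $\BZ(0)^{\oplus m_p}$, recording the multiplicity of the $p$-th Tate twist in $M(X)$.
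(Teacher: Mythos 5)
Your argument is correct in substance, but it takes a genuinely different route from the paper: the paper offers no argument of its own and simply invokes \cite[Propositions~4.6 and 4.10]{H-K}, whereas you reprove the relevant special case of Huber--Kahn directly, first computing the slices of a pure Tate motive via cancellation (so that $\nu^{\geqslant n}M(X)$ is the sub-sum of summands of twist $\geq n$, whence $c_p(X)\cong\BZ(0)^{\oplus m_p}$, in agreement with Example~\ref{ExDecompPureTate}(a) where $c_p(X)=Ch_p(X)[0]$), and then splitting the slice tower of Definition~\ref{DefSliceFilt} using Hom-vanishing between distinct Tate twists. What your approach buys is a self-contained proof that also makes the fundamental invariants explicit; what the citation buys is that Huber--Kahn already package the naturality statement and the surrounding mixed Tate formalism that the paper uses later (e.g.\ Lemma~\ref{Lem_MTVanishingResult}).

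One bookkeeping slip to fix: in the triangle $\nu^{\geqslant n}M\to M\to\nu_{<n}M\to\nu^{\geqslant n}M[1]$ the obstruction to splitting is the connecting morphism, which lives in $\Hom(\nu_{<n}M,\nu^{\geqslant n}M[1])$, i.e.\ in groups $\Hom(\BZ(d')[2d'],\BZ(d)[2d+1])\cong H^{2(d-d')+1}_{\cM}(\Spec k,\BQ(d-d'))$ with $d>d'$ --- maps from the \emph{low} twists to the shifted \emph{high} twists --- and not in the groups $\Hom(\BZ(d)[2d+1],\BZ(d')[2d'])$ or $\Hom(\BZ(d)[2d],\BZ(d')[2d'-1])$ that you quote, which go from high twists to low twists and vanish for negative-weight reasons but are not where the obstruction sits. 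The group you actually need does vanish, because motivic cohomology of a field vanishes in degrees strictly above the weight (here degree $2m+1>m$ for $m=d-d'\geq 1$); and uniqueness of the splitting, which is what gives canonicity and compatibility with morphisms, uses $\Hom(\nu_{<n}M,\nu^{\geqslant n}M)\cong CH^{d-d'}(\Spec k)\otimes\BQ=0$ for $d>d'$. With these two directions straightened out, your iteration from $n\gg 0$ down to $n=0$ is complete and yields the stated natural isomorphism.
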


\begin{proof}
See \cite[Proposition~4.6 and Proposition~4.10]{H-K}.
\end{proof}

Consider the functor $c_n(-)$, in the remaining part we deal with the question that when the n'th fundamental motivic invariant $c_n(X):=c_n(M(X))$ lies in the category $\textbf{D}^b(\textbf{Ab})$.

\begin{example}\label{ExDecompPureTate}

\begin{enumerate}
\item

Assume that $M:=M(X)$ is pure Tate. Then we have $c_p(X)=Ch_p(X)[0]=\Hom(M,\BZ(p)[2p])$. 
\item
Let $\Gamma$ be a fiber bundle over $X$ with fiber $F$ as in the theorem \ref{Leray Hirsch for Voevodsky motives} below. Then since $M(F)$ is pure Tate we have

$$
c_n(M(\Gamma))=\bigoplus_{p+q=n}c_p(F)\otimes c_q(M(X)),
$$
with $c_p(F)=Ch_p(F)[0]$. In particular $c_n(M(\Gamma))$ lies in $\textbf{D}^b(\textbf{Ab})$ provided that $c_i(X)\in \textbf{D}^b(\textbf{Ab})$ for $0\leq i\leq n$.
\item
Consider the moduli stack $\CX:=\scrH^1(C,\GL_n)$. Note that this is isomorphic to the stack of vector bundles of rank $n$ on a curve $C$. Let $\CX^d$ denote the substack of $\scrH^1(C, \GL_n)$ parameterizing vector bundles of degree $d$. For $C:=\BP^1$ one sees that $c_n(\CX)$ lies in $\textbf{D}^b(\textbf{Ab})$. This follows from the following formula for the motive of $\CX^d$

$$
\CD
M(\CX^d)=M(B\BG_m) \otimes M(Jac(C)) \otimes_{i=1\dots n-1} Z(C,\BQ(i)[2i])),
\endCD
$$

See \cite{Huskins}. Here $Z(C,\BQ(i)[2i]))$ denotes the motivic zeta function of $C$. Regarding \cite[conjecture 3.4]{Beh07} one expects that a similar fact holds for the motive of $\scrH^1(C,G)$ for split reductive group $G$.
 
\end{enumerate}
\end{example}

\begin{proposition}\label{PropChowMixedTateIsFinite}
Let $X\in \textbf{Sch}_\BF$ and assume that $M^c(X)$ is mixed Tate, then $Ch_\ast(X)$ is finite over $\BZ$.
\end{proposition}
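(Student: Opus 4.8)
The strategy is to identify $Ch_i(X)$ with a Borel--Moore motivic homology group of $X$ and then to bound it by motivic cohomology groups of $\Spec\BF$ through formal dévissage in $\textbf{DM}_{gm}^{eff}(\BF)$.

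Since Fulton's Chow groups vanish for $i<0$ and $i>\dim X$, we have $Ch_\ast(X)=\bigoplus_{0\le i\le\dim X}Ch_i(X)$, so it suffices to prove that each $Ch_i(X)$ is a finitely generated abelian group. For this I would use the standard identification of Chow groups with Borel--Moore motivic homology: for every $X\in\textbf{Sch}_\BF$ and every $i\ge 0$ one has a natural isomorphism $Ch_i(X)\cong\Hom_{\textbf{DM}_{gm}^{eff}(\BF)}\bigl(\BZ(i)[2i],M^c(X)\bigr)$, compatible with the cycle class map recalled in Section~\ref{Notation and Conventions}; see \cite[Chap.~5]{VV}, together with \cite[Appendix~B]{H-K} for the treatment in positive characteristic. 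Thus the problem is reduced to showing that $\Hom\bigl(\BZ(i)[2i],M^c(X)\bigr)$ is finitely generated.

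Now I would exploit the hypothesis that $M^c(X)$ is mixed Tate. By Definition-Remark~\ref{DefRemCn}, $MT\textbf{DM}_{gm}^{eff}(\BF)$ is the thick tensor subcategory of $\textbf{DM}_{gm}^{eff}(\BF)$ generated by $\BZ(0)$ and $\BZ(1)$; since it contains every Tate twist $\BZ(n)$ with $n\ge 0$ and is closed under direct summands, every object of it is a direct summand of an object $N$ obtained from a \emph{finite} family of objects $\BZ(n_\beta)[m_\beta]$, $\beta\in B$ finite, with $n_\beta\ge 0$, by finitely many cones. Applying the cohomological functor $\Hom\bigl(\BZ(i)[2i],-\bigr)$ to the long exact sequences of these cones and inducting along the tower, one sees that $\Hom\bigl(\BZ(i)[2i],M^c(X)\bigr)$ is a subquotient of the finite direct sum $\bigoplus_{\beta\in B}\Hom\bigl(\BZ(i)[2i],\BZ(n_\beta)[m_\beta]\bigr)$. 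Finally, $\Hom_{\textbf{DM}_{gm}^{eff}(\BF)}\bigl(\BZ(i)[2i],\BZ(n)[m]\bigr)=H^{\,m-2i}_{\mathrm{mot}}\bigl(\Spec\BF,\BZ(n-i)\bigr)$, and the motivic cohomology of the finite field $\BF$ is finitely generated in every bidegree: it is $\BZ$ for $(p,q)=(0,0)$, a finite cyclic group for $p=1$, $q\ge 1$, and zero otherwise (equivalently, by Quillen's computation of $K_\ast(\BF)$). Since subgroups, quotients and extensions of finitely generated abelian groups are finitely generated, $Ch_i(X)$ is finitely generated for every $i$, and hence $Ch_\ast(X)$ is finite over $\BZ$.

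The argument is mostly formal triangulated dévissage; the two inputs that are not formal are the identification of Chow groups with Borel--Moore motivic homology --- which in characteristic $p$ should be understood in the framework of \cite[Appendix~B]{H-K} (or deduced via de~Jong alterations) --- and the finite generation of the motivic cohomology of a finite field, which I expect to be the only real point of substance. One should also note that ``mixed Tate'' here has to be read with integral coefficients, as in Definition-Remark~\ref{DefRemCn}; with $\BQ$-coefficients the same reasoning would only yield that $Ch_\ast(X)\otimes\BQ$ is finite-dimensional.
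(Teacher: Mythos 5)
Your proof is correct and takes essentially the same route as the paper: both identify $Ch_i(X)$ with $\Hom_{\textbf{DM}_{gm}^{eff}(\BF)}\bigl(\BZ(i)[2i],M^c(X)\bigr)$ and then d\'evisse the mixed Tate object $M^c(X)$ down to the Tate generators $\BZ(n)[m]$ (the paper via its iterated extension categories $\CE_n$, you via thick-subcategory generation with cones and summands), concluding from the finiteness of $\Hom\bigl(\BZ(i)[2i],\BZ(n)[m]\bigr)$ over a finite field, which the paper cites from Kahn and you derive from Quillen's computation of $K_\ast(\BF)$. Your explicit treatment of direct summands and your closing caveat about integral versus rational coefficients are welcome precisions but do not change the substance of the argument.
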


\begin{proof} First assume that the motive $M:=M^c(X)$ lies in the category $\CE_0$ of extensions of $\oplus\BZ(n)[j]$ by $\oplus\BZ(n')[j']$. Then the statement follows by applying $\Hom(\BZ(i)[2i],-)$ to the triangle 
$$
\oplus\BZ(n)[j]\to M\to \oplus\BZ(n')[j'].
$$
Note that since $\BF$ is a finite field we observe that 
$$
\Hom(\BZ(i)[2i],\BZ(n)[j])
$$ 
is finite; e.g. see \forget{\cite{Qui} and \cite{Lev}} \cite[Theorem~40]{Kahn05}.  Now consider the category $\CE_1$ of all $M_1$ that are extensions of $M_0$ by $M_0'$ in $\CE_0$. We may argue as above that $\Hom(\BZ(i)[2i],M_1)$ is finitely generated. One can repeat this process to produce the category $\CE_n$, and observe that $Ch_\ast(X)$ is finitely generated when $M$ lies in $\CE_n$. So it remains to see that $\dirlim \CE_n$ coincides the category of mixed Tate motives, which is clear, because by construction it is a thick subcategory of the category of mixed Tate motives that contains motives of the form $\BZ(i)[j]$. 
\end{proof}

\begin{definition}\label{DefGeoMixedTate}
 An object $M \in DM_{gm}^{eff}(F)$ is called \emph{geometrically mixed Tate} if its
restriction to $DM_{gm}^{eff}(F^{sep})$ lies in $MTDM_{gm}^{eff}(F^\sep)$.
\end{definition}

\begin{lemma}\label{LemGeoMixedTate}
An object $M \in DM_{gm}^{eff}(F)$ is geometrically mixed Tate if and only if for some finite separable extension $E$ of $F$ the restriction of $M$ to $DM_{gm}^{eff}(E)$ lies in $MTDM_{gm}^{eff}(E)$.
\end{lemma}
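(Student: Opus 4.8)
The plan is to treat the two implications separately, the reverse one being immediate and the forward one an exercise in ``spreading out a finite diagram to a finite level''.

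The implication ``if'' is formal. Suppose $M|_{DM_{gm}^{eff}(E)}$ lies in $MTDM_{gm}^{eff}(E)$ for some finite separable $E/F$, and fix an $F$-embedding $E\hookrightarrow F^\sep$. The base change functor $DM_{gm}^{eff}(E)\to DM_{gm}^{eff}(F^\sep)$ is triangulated and monoidal and fixes the generators $\BZ(0)$ and $\BZ(1)$; being an additive functor it also preserves retracts, so it carries the thick tensor subcategory $MTDM_{gm}^{eff}(E)$ into $MTDM_{gm}^{eff}(F^\sep)$. Since $M_{F^\sep}$ is the image of $M_E$ under this functor, $M_{F^\sep}\in MTDM_{gm}^{eff}(F^\sep)$, i.e.\ $M$ is geometrically mixed Tate.

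For the ``only if'' direction the key input is the continuity of Voevodsky's category along the pro-(finite separable) extension $F^\sep=\varinjlim_E E$: for compact objects $A,B$ over $F$ one has $\Hom_{DM_{gm}^{eff}(F^\sep)}(A_{F^\sep},B_{F^\sep})=\varinjlim_E \Hom_{DM_{gm}^{eff}(E)}(A_E,B_E)$, and more generally every compact object and every morphism over $F^\sep$ is already defined at some finite level. Using this I would proceed as follows. First, by the Thomason--Neeman description of thick subcategories, $M_{F^\sep}\in MTDM_{gm}^{eff}(F^\sep)$ means $M_{F^\sep}$ is a direct summand of an object $N$ built from finitely many Tate twists $\BZ(n_j)[m_j]$ ($n_j\ge 0$) by finitely many mapping cones. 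The finitely many connecting morphisms of this tower lie in $\Hom$-groups of compact motives, hence descend to a common finite separable $E_0/F$; since cones commute with base change there is $N_{E_0}\in DM_{gm}^{eff}(E_0)$ with $(N_{E_0})_{F^\sep}\cong N$, and $N_{E_0}$, being a finite iterated cone of Tate twists, lies in $MTDM_{gm}^{eff}(E_0)$. Next I would descend the splitting: the projector $e\in\End(N)$ with image $M_{F^\sep}$ lifts to $e_1\in\End\big((N_{E_0})_{E_1}\big)$ over some finite $E_1\supseteq E_0$; since $e_1^2-e_1$ maps to $0$ in the colimit it already vanishes over some finite $E_2\supseteq E_1$, where $e_2$ is a genuine idempotent whose image $M''$ (using that $DM_{gm}^{eff}$ is idempotent complete by construction) is a direct summand of $(N_{E_0})_{E_2}$, hence lies in $MTDM_{gm}^{eff}(E_2)$. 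Finally $(M'')_{F^\sep}\cong M_{F^\sep}\cong (M_{E_2})_{F^\sep}$, and since $M''$ and $M_{E_2}$ are compact an isomorphism between them over $F^\sep$ — together with its inverse and the two identities witnessing invertibility, all finite data in the colimit — descends to a finite separable $E\supseteq E_2$; then $M_E\cong (M'')_E\in MTDM_{gm}^{eff}(E)$, as desired.

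The main obstacle is securing the continuity statement in precisely the form needed: that $DM_{gm}^{eff}$ with $\BQ$-coefficients, over a field $F$ that may be imperfect in positive characteristic, commutes with the filtered colimit $F^\sep=\varinjlim_E E$ on compact objects (equivalently, that compact generators and $\Hom$-groups of compact objects spread out over finite separable subextensions). Granting this, everything else is the routine observation that a finite diagram living in a $2$-colimit of triangulated categories already lives at a finite stage, plus the idempotent-completeness of $DM_{gm}^{eff}$ needed to produce the image of the descended projector.
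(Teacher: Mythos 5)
Your proposal is correct, but it is worth noting that the paper itself does not argue the lemma at all: its ``proof'' is the single line ``See \cite[Proposition~5.3]{H-K}'', so what you have written is essentially a self-contained reconstruction of the proof of the cited result rather than a variant of an argument in the text. Your two halves are both sound: the ``if'' direction via the base-change functor $DM_{gm}^{eff}(E)\to DM_{gm}^{eff}(F^\sep)$ preserving $\BZ(0)$, $\BZ(1)$, cones and retracts; and the ``only if'' direction by writing $M_{F^\sep}$ as a retract of a finite cone tower on objects $\BZ(n_j)[m_j]$ (the summand-of-a-tower description of the thick subcategory is legitimate, by the usual ``add the complementary summand'' trick), then descending the finitely many connecting maps, the idempotent, and finally the comparison isomorphism to a finite separable level, using idempotent completeness of $DM_{gm}^{eff}$. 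The one genuine input, as you say yourself, is continuity: $\Hom_{DM_{gm}^{eff}(F^\sep)}(A_{F^\sep},B_{F^\sep})=\varinjlim_E\Hom_{DM_{gm}^{eff}(E)}(A_E,B_E)$ for the filtered system of finite separable subextensions; this is available in the literature (it is the standard spreading-out property of motives of smooth schemes and finite correspondences along such pro-\'etale limits, cf.\ continuity in Cisinski--D\'eglise \cite{CD}), and in the paper's actual use the base field is finite, so the imperfectness caveat you raise is harmless. A small economy in your argument: since the Tate objects are already defined over $F$, only the connecting morphisms and the projector need to be descended, which is exactly what you do. What the paper's citation buys is brevity; what your argument buys is an explicit, checkable proof whose only external dependence is the continuity statement.
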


\begin{proof}
See \cite[Proposition~5.3]{H-K}.
\end{proof}

Recall from \cite{AH17} that in the motivic context one may formulate the Leray-Hirsch theorem after imposing certain conditions to the fiber.

\begin{theorem}\label{Leray Hirsch for Voevodsky motives}
Let $X$ be a smooth irreducible variety over $k$. Let $\pi:\Gamma \rightarrow X$ be a proper smooth locally trivial fibration with fibre $F$.  Furthermore, assume that $F$ is cellular and satisfies Poincar\'e duality. Then one has an isomorphism
$$
M(\Gamma)\cong \bigoplus_{p \geq 0}CH_p(F) \otimes M(X)(p)[2p]
$$  
in $\textbf{DM}_{gm}^{eff}(k)$.
\end{theorem}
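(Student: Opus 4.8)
The plan is to produce a canonical comparison morphism in $\textbf{DM}_{gm}^{eff}(k)$ and to check that it is an isomorphism by restricting over a trivializing cover of $X$. First I would record the consequences of the hypotheses on $F$. Since $F$ is cellular, $CH_\ast(F)$ is free of finite rank, the motive $M(F)$ is pure Tate, and Proposition~\ref{Prop_DecompRelCel} applied to $F\to\Spec k$ gives $M(F)\cong\bigoplus_p CH_p(F)\otimes\BZ(p)[2p]$; moreover $F\times F$ is again cellular, so $CH^\ast(F\times F)=CH^\ast(F)\otimes CH^\ast(F)$. Combining this with Poincar\'e duality, the pairing $CH^p(F)\times CH^{d-p}(F)\to CH^d(F)\cong\BZ$ (with $d=\dim F$) is perfect, so we may choose mutually Poincar\'e-dual bases $\{a^{(p)}_i\}_i$ of $CH^p(F)$ and $\{b^{(d-p)}_i\}_i$ of $CH^{d-p}(F)$, and the class of the diagonal decomposes as $[\Delta_F]=\sum_{p,i}a^{(p)}_i\times b^{(d-p)}_i$ in $CH^d(F\times F)$.

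Next I would transport this decomposition to the relative diagonal $\delta\colon\Gamma\hookrightarrow\Gamma\times_X\Gamma$, which is a regular closed immersion of codimension $d$ since $\pi$ is smooth, $\Gamma\times_X\Gamma$ being smooth because $X$ is. The aim is to write $[\delta_\ast\Gamma]=\sum_{p,i} q_1^\ast\alpha^{(p)}_i\cdot q_2^\ast\beta^{(d-p)}_i$ in $CH^d(\Gamma\times_X\Gamma)$, where $q_1,q_2$ are the two projections and $\alpha^{(p)}_i,\beta^{(d-p)}_i\in CH^\ast(\Gamma)$ restrict fiberwise to the chosen dual bases. Granting this, the classes $e_p:=\sum_i q_1^\ast\alpha^{(p)}_i\cdot q_2^\ast\beta^{(d-p)}_i$ form a complete system of orthogonal idempotent relative self-correspondences of $\Gamma/X$, hence split $M(\Gamma)$ as $\bigoplus_p e_p M(\Gamma)$, and composing with the correspondences given by the $\alpha^{(p)}_i$, the $\beta^{(d-p)}_i$, and $\pi$ itself identifies $e_p M(\Gamma)$ with $CH_p(F)\otimes M(X)(p)[2p]$. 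That the resulting morphism $\bigoplus_p CH_p(F)\otimes M(X)(p)[2p]\to M(\Gamma)$ is an isomorphism is then checked over a Zariski (or Nisnevich) trivializing cover $\{U_\alpha\}$ of $X$: over $U_\alpha$ it becomes the K\"unneth isomorphism $M(U_\alpha\times F)\cong M(U_\alpha)\otimes M(F)\cong\bigoplus_p CH_p(F)\otimes M(U_\alpha)(p)[2p]$ of \cite{VV}, and one upgrades ``isomorphism over each $U_\alpha$'' to ``isomorphism over $X$'' by induction on the size of the cover, applying the five lemma to the map of Mayer--Vietoris triangles (alternatively, by working in the category $\textbf{DM}(X)$ of motives over $X$, where pullbacks to an open cover are jointly conservative, and pushing forward to $k$ at the end). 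The $\BQ$-coefficient convention makes the idempotent splittings unproblematic.

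The main obstacle is precisely the step of promoting the fiberwise K\"unneth decomposition of the relative diagonal to a genuinely global identity in $CH^d(\Gamma\times_X\Gamma)$: the individual fiber classes $a^{(p)}_i$ need not extend to classes on $\Gamma$, so one cannot simply copy the absolute formula. Here properness of $\pi$ (a well-behaved pushforward, relative Poincar\'e duality), smoothness (regularity of $\delta$, smoothness of $\Gamma\times_X\Gamma$) and local triviality are combined to run a Mayer--Vietoris / spreading-out argument that patches the local product decompositions into a global one of the canonical class $[\delta_\ast\Gamma]$. I would flag the clean special case that bypasses this difficulty and already covers the intended applications: when the affine-cell stratification of $F$ is respected by the transition functions of the fibration — e.g. when $\pi$ is an iterated projective (or affine) bundle, as for Bott--Samelson-type resolutions — the total space $\Gamma\to X$ is itself relatively cellular, and the theorem follows at once from Proposition~\ref{Prop_DecompRelCel} with all $Y_i=X$; the general case reduces to this after the patching argument above.
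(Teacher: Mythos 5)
There is a genuine gap, and it sits exactly where you flagged it. Note first that the paper itself contains no proof of this theorem: it is recalled from the authors' earlier work \cite{AH17}, so there is no in-paper argument to compare you against; measured against what the hypotheses actually give you, however, your central step is missing rather than merely deferred. Your strategy needs either global classes $\alpha_i^{(p)}\in CH^*(\Gamma)$ restricting to the chosen dual bases on every fibre, or at least global relative K\"unneth projectors $e_p\in CH^d(\Gamma\times_X\Gamma)$ decomposing the class of the relative diagonal. That existence statement \emph{is} the theorem (it is the motivic form of the Leray--Hirsch hypothesis, which in the classical topological theorem is an assumption precisely because it does not follow from local triviality), and the ``Mayer--Vietoris / spreading-out patching'' you appeal to does not produce it: over an overlap $U_\alpha\cap U_\beta$ the two trivializations differ by an automorphism of $(U_\alpha\cap U_\beta)\times F$ over the base, which acts on $CH^*((U_\alpha\cap U_\beta)\times F)\cong CH^*(U_\alpha\cap U_\beta)\otimes CH^*(F)$ by a filtration-shifting, non-block-diagonal automorphism. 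Hence the locally chosen lifts of the fibre classes (and even the locally defined projectors) do not agree on overlaps, there is no canonical correction, and Chow classes are not sections of a sheaf that can be glued from compatible local data. Properness and smoothness give you a well-defined global class $[\delta_*\Gamma]$, but not its decomposition into products of pullbacks of global classes; asserting that decomposition is circular, since granting it the rest of your argument (idempotents, local K\"unneth check, MV/conservativity) is routine.

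The same issue undercuts the alternative phrasing ``work in $DM(X)$ and use conservativity of restriction to a cover'': conservativity only certifies that an \emph{already constructed} global morphism is an isomorphism; it does not construct the morphism, and ``locally isomorphic to $\bigoplus_p CH_p(F)\otimes\BZ(p)[2p]$'' does not formally imply globally isomorphic without an argument killing the gluing/extension data (this is where cellularity and Poincar\'e duality of $F$ must actually be used, as in \cite{AH17}, whose induction over a trivializing cover is organized differently from your diagonal-decomposition route). Your fallback observation is correct but strictly weaker than the statement: when the transition functions respect the cell structure (iterated projective or affine bundles, Bott--Samelson type towers), $\Gamma\to X$ is relatively cellular and Proposition \ref{Prop_DecompRelCel} applies directly; that covers many of the intended applications but does not prove the theorem as stated.
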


\begin{remark}\label{Rem_IteratedTower}
One can proceed by considering the following situation. Namely, start with a variety $\wt X_0$ and then consider an iterated tower of cellular fibrations

$$
\CD
\tilde{X}_n:=\tilde{X}\\
@V{p_{n-1}}VV\\
\tilde{X}_{n-1}\\
\vdots\\
@VVV\\
\tilde{X}_0
\endCD
$$
over it.  In this case one can recursively compute the cohomology of $\tilde{X}$ by applying Leray-Hirsch theorem. In particular we observe that if $M(\tilde{X}_0)$ is mixed Tate then $M(\tilde{X})$ is mixed Tate as well.

\end{remark}

Following the above constructive method, to treat more complicated singular varieties, one can proceed in the following way. Namely, consider a variety $\CS$ which admit a resolution of singularities $\Sigma\to \CS$ by an iterated tower of cellular fibrations. We then apply the decomposition theorem \cite{BBD} to such resolution. Note that in order to establish this theorem in the motivic context one needs to assume Grothendieck's standard conjectures and Murre's conjecture. Regarding this to implement the decomposition theorem in the motivic context we need to restrict our attention to the cases where the cycle class map is easier to be described.


\begin{proposition}\label{PropMotivicDecomposition}
Assume that $\CS$ admits a resolution of singularities $\Sigma\to\CS$. Consider the cycle class map
$$
cl: A_\ast:=Ch_\ast(\Sigma\times_\CS \Sigma)\to H_{2\ast}^{BM}(\Sigma\times_\CS \Sigma). 
$$
We have the following statements

\begin{enumerate}
\item
Assume $\dim_\BQ A_\ast$ is finite. Note that this is for example the case when $M(\Sigma\times_\CS\Sigma)$ is mixed Tate; see Lemma \ref{PropChowMixedTateIsFinite}. Assume further that $cl$ is surjective and the $\ker cl$ lies inside the Jacobson radical $J$ of $A_\ast$. Then any decomposition of $Rf_\ast\BQ_\Sigma[n]$ lifts to a decomposition of the Chow motive $(X,\Delta_X)$. 

\item
If cycle class map $cl$ is an isomorphism then any decomposition of $Rf_\ast\BQ_\Sigma[n]$ corresponds to a unique decomposition of $(\Sigma,\Delta_\Sigma)$.

\end{enumerate}

\end{proposition}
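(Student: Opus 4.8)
The plan is to deduce the statement from the foundational correspondence calculus of \cite{CoHa} and \cite{C-M} together with the elementary ring theory of lifting idempotents along surjections with nilpotent kernel. Write $f\colon\Sigma\to\CS$ for the resolution, $d=\dim\Sigma$, and $X=\Sigma$ with $\Delta_X=\Delta_\Sigma$ its relative diagonal in $\Sigma\times_\CS\Sigma$. The first step is to recall that $A_\ast=Ch_\ast(\Sigma\times_\CS\Sigma)$, with composition of correspondences relative to $\CS$, is a $\BQ$-algebra with unit $[\Delta_\Sigma]$, canonically identified with $\End$ of the relative Chow motive $(\Sigma,\Delta_\Sigma)$, and that there is a realization ring homomorphism $r\colon A_\ast\to\End_{D^b_c(\CS)}(Rf_\ast\BQ_\Sigma[n])$ which factors as $cl$ followed by the identification of the relevant graded piece of $H^{BM}_{2\ast}(\Sigma\times_\CS\Sigma)$ with $\End_{D^b_c(\CS)}(Rf_\ast\BQ_\Sigma[n])$; here one uses that $\Sigma$ is smooth of dimension $d$, so that $Rf_\ast\BQ_\Sigma[d]$ is Verdier self-dual, together with adjunction and proper base change to compute the relevant $\Hom$-group in $D^b_c(\CS)$ in terms of the fibre product. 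The hypotheses on $cl$ are tailored precisely so as to bypass the standard and Murre conjectures that the general motivic decomposition theorem would require: surjectivity of $cl$ makes $r$ surjective onto the endomorphisms that matter, and bijectivity of $cl$ makes $r$ an isomorphism.

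With this in hand, for part (a) I would argue as follows. By the decomposition theorem \cite{BBD}, the given decomposition of $Rf_\ast\BQ_\Sigma[n]$ into shifted semisimple perverse summands is cut out by a family $\{\bar e_i\}$ of pairwise orthogonal idempotents of $\End_{D^b_c(\CS)}(Rf_\ast\BQ_\Sigma[n])$ with $\sum_i\bar e_i=\mathrm{id}$. Since $cl$ is surjective, the restriction of $r$ to the degree-zero correspondences is a surjective ring homomorphism onto $\End_{D^b_c(\CS)}(Rf_\ast\BQ_\Sigma[n])$, with kernel contained in $\ker cl\subseteq J$. Because $\dim_\BQ A_\ast<\infty$ (which, as noted, holds whenever $M(\Sigma\times_\CS\Sigma)$ is mixed Tate, by \ref{PropChowMixedTateIsFinite}), the algebra $A_\ast$ is Artinian, so $J$, and hence $\ker cl$, is nilpotent; therefore idempotents, and indeed complete orthogonal systems of idempotents, lift along this surjection. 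Lifting $\{\bar e_i\}$ produces pairwise orthogonal idempotents $\{e_i\}\subseteq A_\ast=\End(\Sigma,\Delta_\Sigma)$ with $\sum_i e_i=[\Delta_\Sigma]$, i.e. a decomposition $(X,\Delta_X)=\bigoplus_i(\Sigma,e_i)$ of the Chow motive whose image under $r$ is the given decomposition. For part (b), if $cl$ is an isomorphism then $\ker cl=0\subseteq J$, so part (a) already furnishes a lift; moreover $r$ is now a ring isomorphism $\End(\Sigma,\Delta_\Sigma)\isoto\End_{D^b_c(\CS)}(Rf_\ast\BQ_\Sigma[n])$, so idempotents, orthogonal families thereof, and hence direct-sum decompositions correspond bijectively on the two sides, which gives the asserted unique correspondence between the decompositions.

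The hard part is not the ring-theoretic argument but the input I have quoted: constructing the relative correspondence calculus over the base $\CS$ — which in the intended applications is singular and ultimately (a quotient of) a moduli stack — and checking that the cohomological realization $r$ is multiplicative and really does factor through the cycle class map $cl$, with the precise identification of $\End_{D^b_c(\CS)}(Rf_\ast\BQ_\Sigma[n])$ with a graded piece of $H^{BM}_{2\ast}(\Sigma\times_\CS\Sigma)$. This is exactly the content developed in \cite{CoHa} and \cite{C-M}, and it is the only place where the geometry of $\CS$ and the finiteness statement \ref{PropChowMixedTateIsFinite} genuinely enter; once it is granted, the proof reduces to the idempotent-lifting argument above.
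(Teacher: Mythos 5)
Your argument is essentially the paper's own proof: both use the Corti--Hanamura identification $\End_{D^b_c(\CS)}(Rf_\ast\BQ_\Sigma[n])\cong H_{2n}^{BM}(\Sigma\times_\CS\Sigma)$ and then lift idempotents along the surjection $cl$, whose kernel is nilpotent because $A_\ast$ is finite-dimensional (Artinian) and $\ker cl\subseteq J$, with part (b) obtained by composing with the inverse of $cl$. Your write-up is in fact slightly more careful than the paper's, since you lift complete orthogonal systems of idempotents and make explicit the multiplicativity of the realization map, which the paper leaves implicit.
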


\begin{proof}
a) As $A_\ast$ is finite, some power of $J$ vanishes. This allows to lift an idempotent $A_\ast$ modulo $J$. Furthermore, there is an isomorphism 

\begin{equation}\label{eqcyCoHaIsom}
\End_{D_{cc}^b(\CS)}(Rf_\ast\BQ_\Sigma[n])\to H_{2n}^{BM}(\Sigma\times_\CS\Sigma),
\end{equation}

see \cite[Lemma 2.21]{CoHa}. Therefore an idempotent of $\End_{D_{cc}^b(\CS)}(Rf_\ast\BQ_\Sigma[n])$ induces an idempotent of $H_{2n}^{BM}(\Sigma\times_\CS\Sigma)$. This lifts to an idempotent in $A_\ast$.

b) This follows from the isomorphism \ref{eqcyCoHaIsom} followed by inverse of cycle class map.

\end{proof}

\begin{definition}\label{DefCatNett}
Let $\CC_{\leq n}$ be the full subcategory of the category of $\textbf{Sch}_k$ whose objects satisfy the following conditions. Namely, every $\CS\in\CC_{\leq n}$ admits a surjective morphism $\pi:\Sigma:=\Sigma(\CS)\to \CS$ in $\textbf{Sch}_k$ such that
\begin{itemize} 
\item[a)]
$\Sigma$ is smooth and the $m$-th motivic invariant $c_m(M(\Sigma))$ lies in $\textbf{D}_f^b(\textbf{Ab})$ for all $m\leq n$ and,\\
\item[b)]
$\pi: \Sigma\to \CS$ admits a stratification $\CS=\bigcup_\alpha \CU_\alpha$ such that:
\begin{itemize}
\item[-]
$\pi: \pi^{-1}(\CU_\alpha)\to \CU_\alpha$ is a locally trivial fiber bundle with cellular fiber $F_\alpha$,
\item[-]
 $\ol \CU_\alpha$ lies in $\CC_{\leq n}$ for $\alpha\neq \alpha_0$, where $\CU_{\alpha_0}$ is the open stratum.
\end{itemize}
\end{itemize}

One can strengthen the above conditions and define the category $\CC_\ll$ by requiring that the motivic invariants of $\Sigma$ lie in $\textbf{D}_f^b(\textbf{Ab})$ for all $n$, and additionally, they vanish when $n$ is sufficiently large.

\end{definition}

\forget{
----------------------------------------
\begin{definition}\label{DefCatNett}
Let $\CC_n$ be the full subcategory of the category of $\textbf{Sch}_k$ whose objects have the following property. Namely, every $\CS\in\CC_n$ admits a surjective morphism $\pi:\Sigma:=\Sigma(\CS)\to \CS$ in $\textbf{Sch}_k$ such that
\begin{itemize} 
\item[a)]
$\Sigma$ is smooth and the n-th motivic invariant $c_n(M(\Sigma))$ lies in $\textbf{D}_f^b(\textbf{Ab})$,\\
\item[b)]
$\pi: \Sigma\to \CS$ admits a stratification $\CS=\bigcup_\alpha \CU_\alpha$ such that:
\begin{itemize}
\item[-]
$\pi: \pi^{-1}(\CU_\alpha)\to \CU_\alpha$ is a locally trivial fiber bundle with cellular fiber $F_\alpha$,
\item[-]
 $\ol \CU_\alpha$ lies in $\CC_n$ for $\alpha\neq \alpha_0$, where $\CU_{\alpha_0}$ is the open stratum.
\end{itemize}
\end{itemize}
 
One can strengthen condition $a)$ by requiring that the motivic invariants of $\Sigma$ lie in $\textbf{D}_f^b(\textbf{Ab})$ for all $n$. We denote the resulting category by $\CC_-$.  Moreover, we denote by $\CC_\ll$ the category obtained by additionally requiring that these motivic invariants vanish for sufficiently large $n$.  
\end{definition}

-------------------------------
}

\begin{remark}\label{RemTowerOfFib}
Due to the geometry of convolution morphisms, it is sometimes useful to consider the following more general situation. Namely, we may allow fibers $F_\alpha$ to be an iterated tower of cellular fiberations as we mentioned earlier in remark \ref{Rem_IteratedTower}.
\end{remark}

\begin{definition} \label{DefMixedArrangement}

Let $X$ be a projective variety over $k$. We say that $X$ is a $c_n^{-1}(\textbf{D}_f^b(\textbf{Ab}))$-configuration if $X=\cup_i X_i$ with irreducible $X_i$'s, such that
\begin{enumerate}
\item[i)] $c_n(M(X_i))$ lies in $\textbf{D}_f^b(\textbf{Ab})$, and
\item[ii)] the union of the elements of any arbitrary subset of $\{X_{ij} := X_i \cap X_j\}_{i \neq j}$ is either a $c_n^{-1}(\textbf{D}_f^b(\textbf{Ab}))$-configuration or empty.

\end{enumerate} 

Furthermore we say that $X$ is a $c^{-1}(\textbf{D}_f^b(\textbf{Ab}))$-configuration if the above holds for every $n>0$. We sometimes strengthen this even further by requiring that $c_n(M(X_i))$  vanishes for large enough $n$. Then $X=\cup_i X_i$ is called a \emph{strict} $c^{-1}(\textbf{D}_f^b(\textbf{Ab}))$-configuration. 

\end{definition}

\begin{example}\label{ExampleBigDiagonal}
Assume that $c_n(Y)$ lie in $\textbf{D}_f^b(\textbf{Ab})$, then the irreducible components of big diagonal $\Delta\subset Y^n$ form a $c_n^{-1}(\textbf{D}_f^b(\textbf{Ab}))$-configuration.
\end{example}

\begin{definition}\label{DefNett}
We say that a bound $\wh Z$ is nett (resp. $c_{\leq n}$-nett) if its special fiber (in the sense of Remark \ref{RemSpecialFiber}) is of finite type, proper and lies in $\CC_{\ll}$ (resp. $\CC_{\leq n}$).
\end{definition}

\forget{
-----------------------------------------
\begin{definition}\label{DefNett}
We say that a bound $\wh Z$ is nett (resp. $c_n$-nett) if its special fiber (in the sense of Remark \ref{RemSpecialFiber}) is of finite type, proper and lies in $\CC_{\ll}$ (resp. $\CC_m$ for all $m\leq n$).
\end{definition}
---------------------------------------
}

\begin{remark}
The above definitions are inspired by the well-known split reductive case. We don't know up to what extent the above definitions may remain useful beyond this case. Although we expect this at least up to some modifications.  
\end{remark}

\begin{theorem}\label{ThmFrobistSemiSimpleOnZ}
Let $\hat{Z}$ be a boundedness condition in the sense of Definition \ref{DefBDLocal}. We have the following statements

a) Assume that $\hat{Z}$ is $c_{\leq n}$-nett boundedness condition in the sense of definition \ref{DefNett}. Then $c_n(M(Z))$ lies in $\textbf{D}_f^b(\textbf{Ab})$.\\

b) Assume that  $\ol Z:=Z\times_k \ol k$ is irreducible and admits a stratified $\ol Z=\coprod_{\beta\in \CB} \ol Z_\beta$ semi-small resolution of singularities $\ol\Sigma\to \ol Z$, with $c_n(M(\ol \Sigma))\in \textbf{D}_f^b(\textbf{Ab})$.  Then $c_n(M(\ol Z))\in \textbf{D}_f^b(\textbf{Ab})\subseteq \textbf{DM}_{gm}^{eff}(\ol k)$.\\

\noindent
Moreover:\\

c) Assume that $\hat{Z}$ is nett (resp. there is  a semi-small resolution $\ol\Sigma\to \ol Z$ with $M(\ol\Sigma)\in MT\textbf{DM}_{gm}^{eff}(\ol k)$). Then $M(Z)$ (resp. $M(\ol Z)$) lies in $MT\textbf{DM}_{gm}^{eff}(k)$ (resp. $MT\textbf{DM}_{gm}^{eff}(\ol k)$). \\

d) In either of the cases mentioned in c), Frobenius acts semi-simply on the cohomology $\Koh^i(\hat{Z},\BQ_\ell)$, here $\ell\neq p$ is a prime number.

\end{theorem}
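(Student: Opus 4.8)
The plan is to establish parts (a)--(d) in sequence, each feeding into the next, with the main work concentrated in part (a).

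For part (a), I would argue by Noetherian induction on the strata of the stratification $\CS = \bigcup_\alpha \CU_\alpha$ attached to the defining surjection $\pi\colon \Sigma \to Z$ guaranteed by $Z \in \CC_{\leq n}$. The key geometric input is the decomposition theorem, imported into the motivic world via Proposition~\ref{PropMotivicDecomposition}: since $c_m(M(\Sigma)) \in \textbf{D}_f^b(\textbf{Ab})$ for all $m \leq n$, Lemma~\ref{PropChowMixedTateIsFinite} (more precisely Proposition~\ref{PropChowMixedTateIsFinite}) ensures the relevant Chow group $A_\ast = Ch_\ast(\Sigma \times_Z \Sigma)$ is finite over $\BZ$, so $\dim_\BQ A_\ast < \infty$ and the hypotheses of Proposition~\ref{PropMotivicDecomposition}(a) on surjectivity of the cycle class map and $\ker cl \subseteq J$ can be invoked; this lifts a decomposition of $Rf_\ast \BQ_\Sigma[\dim]$ to a decomposition of the Chow motive of $Z$. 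On the open stratum $\CU_{\alpha_0}$ the map $\pi$ restricts to a locally trivial fibration with cellular fiber $F_{\alpha_0}$, so by Theorem~\ref{Leray Hirsch for Voevodsky motives} (Leray--Hirsch) together with Definition-Remark~\ref{DefRemCn}(d) (the Künneth formula for $c_n$) one computes $c_n(M(\pi^{-1}(\CU_{\alpha_0})))$ as a finite sum of twists of $Ch_p(F_{\alpha_0})[0] \otimes c_q$ of the base, which lies in $\textbf{D}_f^b(\textbf{Ab})$; the closed complementary strata $\ol{\CU_\alpha}$ lie in $\CC_{\leq n}$, so their fundamental invariants are in $\textbf{D}_f^b(\textbf{Ab})$ by the induction hypothesis. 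Assembling the localization triangles for $M(Z)$ (or $M^c(Z)$) relative to this stratification, and applying $c_n$ — which is triangulated — one concludes $c_n(M(Z)) \in \textbf{D}_f^b(\textbf{Ab})$ since this is a thick subcategory.

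Part (b) is the same argument over $\ol k$, but now the hypothesis is packaged directly: $\ol Z$ is irreducible, admits a stratified semi-small resolution $\ol\Sigma \to \ol Z$ with $c_n(M(\ol\Sigma)) \in \textbf{D}_f^b(\textbf{Ab})$. Semi-smallness is what makes the perverse decomposition of $Rf_\ast\BQ_{\ol\Sigma}$ collapse into intersection complexes supported on the strata $\ol Z_\beta$ with no shifts, so the motivic decomposition of Proposition~\ref{PropMotivicDecomposition} expresses $M(\ol Z)$ (up to the usual Chow-motive bookkeeping) in terms of $M(\ol\Sigma)$ and the intersection motives of the smaller strata, each of which is again handled inductively; applying $c_n$ and thickness of $\textbf{D}_f^b(\textbf{Ab})$ gives the claim, and the inclusion $\textbf{D}_f^b(\textbf{Ab}) \subseteq \textbf{DM}_{gm}^{eff}(\ol k)$ is Definition-Remark~\ref{DefRemCn}(a).

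For part (c), the hypothesis ``nett'' means $Z \in \CC_\ll$, i.e. the stronger version where all motivic invariants $c_m(M(\Sigma))$ lie in $\textbf{D}_f^b(\textbf{Ab})$ and vanish for large $m$; running the argument of (a) for every $m$ simultaneously shows every $c_m(M(Z))$ lies in $\textbf{D}_f^b(\textbf{Ab})$ and vanishes for large $m$, and then Proposition~\ref{PropDecompPureTate}-style reassembly via the slice tower $\nu_{<m}$ — which terminates because the invariants vanish eventually — exhibits $M(Z)$ as built from finitely many $c_m(M(Z))(m)[2m]$ with $c_m(M(Z)) \in \textbf{D}_f^b(\textbf{Ab}) \subseteq MT\textbf{DM}_{gm}^{eff}(k)$, hence $M(Z) \in MT\textbf{DM}_{gm}^{eff}(k)$; the parenthetical variant with a pure-Tate-motive resolution $\ol\Sigma$ runs identically over $\ol k$. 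Finally part (d): a mixed Tate motive has étale realization a successive extension of Tate twists $\BQ_\ell(m)$, on which geometric Frobenius acts by the scalar $q^{-m}$; hence on $\Koh^i(\wh Z,\BQ_\ell)$ — identified with $\Koh^i(\ol Z,\BQ_\ell)$ by Remark~\ref{RemSpecialFiber} since the bound's special fiber descends — Frobenius acts with each graded piece of the weight filtration a sum of equal eigenvalues, and because the weight filtration on the étale realization of a mixed Tate motive is canonically split (distinct weights $2m$ in distinct cohomological degrees force no extensions between non-isomorphic Frobenius characters to survive in a single $\Koh^i$), Frobenius is semisimple. The main obstacle I anticipate is verifying the cycle-class-map hypotheses of Proposition~\ref{PropMotivicDecomposition} — surjectivity of $cl$ and $\ker cl \subseteq J(A_\ast)$ — in the inductive step, since this is exactly where the restriction to ``nett'' bounds (with their nice resolutions of Schubert varieties in affine flag varieties) is doing the real work, and one must ensure the class of such bounds is stable under passing to the closed strata $\ol{\CU_\alpha}$ appearing in the induction.
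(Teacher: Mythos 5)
Your parts (b) and (c) are broadly in line with the paper, but parts (a) and (d) have genuine gaps. In (a) you import the decomposition theorem via Proposition~\ref{PropMotivicDecomposition}, but the morphism $\pi\colon\Sigma\to Z$ furnished by Definition~\ref{DefCatNett} is only a surjective stratified fibration with cellular fibers; it need not be proper, birational or semi-small, so there is no perverse decomposition of $R\pi_\ast\BQ_\Sigma$ to lift, and the hypotheses on the cycle class map cannot be checked. Your finiteness input is also misapplied: Proposition~\ref{PropChowMixedTateIsFinite} needs $M^c$ of the relevant scheme ($\Sigma\times_Z\Sigma$, not $\Sigma$) to be \emph{mixed Tate}, which does not follow from $c_m(M(\Sigma))\in\textbf{D}_f^b(\textbf{Ab})$ for $m\le n$. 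Worse, your Leray--Hirsch step is circular: you compute $c_n(M(\pi^{-1}(\CU_{\alpha_0})))$ as $\bigoplus Ch_p(F_{\alpha_0})\otimes c_q(\text{base})$ and declare it finite, but the base is the \emph{open} stratum of $Z$, which is exactly what is not covered by the induction hypothesis. The paper's argument for (a) uses no decomposition theorem at all and runs in the opposite direction: from the localization triangle $M^c(\pi^{-1}(Z^\circ))\to M^c(\Sigma)\to M^c(\cup_{\alpha\neq\alpha_0}\pi^{-1}(\ol Z_\alpha))$, triangulatedness of $c_n$, the configuration lemma for the closed part and the hypothesis on $c_n(M(\Sigma))$, one first gets $c_n(M^c(\pi^{-1}(Z^\circ)))\in\textbf{D}_f^b(\textbf{Ab})$; only then does one \emph{extract} $c_n(M(Z^\circ))$ via the K\"unneth formula and pure-Tateness of the fiber, and finish with the triangle $M^c(Z^\circ)\to M^c(Z)\to M^c(\cup_\alpha\ol Z_\alpha)$. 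The decomposition theorem (with the cycle class map an isomorphism because semi-smallness bounds $\dim\ol\Sigma\times_{\ol Z}\ol\Sigma$ by $\dim\ol\Sigma$) is used only in part (b), where your sketch is essentially the paper's.

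In (d) your realization-level argument does not prove semisimplicity. Splitting of extensions between \emph{distinct} Tate twists is fine ($H^1(\hat\BZ,\BQ_\ell(m))=0$ for $m\neq0$), but nothing in your argument rules out a nontrivial self-extension of $\BQ_\ell(m)$ by itself inside a single $\Koh^i$, i.e.\ a Jordan block with eigenvalue $q^{-m}$; note also that for singular proper $Z$ several weights can occur in one cohomological degree, so ``distinct weights in distinct degrees'' is not available. The paper's proof needs the genuinely motivic input that the abelian category of mixed Tate motives over a finite field is semisimple with simple objects the $\BQ(n)$, which follows from the vanishing of the relevant Ext groups, i.e.\ from Quillen's computation that $K_i(\BF_q)$ is torsion for $i>0$; this is what makes $M(Z)$ a direct sum of $\BQ(n)$'s and hence forces Frobenius semisimplicity on the realization. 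You also omit the descent step in the geometric case: there one only knows $M(\ol Z)$ is mixed Tate, so by Lemma~\ref{LemGeoMixedTate} $M(Z)$ becomes mixed Tate after a finite extension, giving semisimplicity of a power $F^n$ of Frobenius, and one concludes because semisimplicity of $F^n$ implies that of $F$.
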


\forget{
------------------------------------

\begin{theorem}\label{ThmFrobistSemiSimpleOnZ}
Let $\hat{Z}$ be a boundedness condition in the sense of Definition \ref{DefBDLocal}. We have the following statements

a) Assume that $\hat{Z}$ is $c_n$-nett boundedness condition in the sense of definition \ref{DefNett}. Then $c_n(M(Z))$ lies in $\textbf{D}_f^b(\textbf{Ab})$.\\

b) Assume that  $\ol Z:=Z\times_k \ol k$ is irreducible and admits a stratified $\ol Z=\coprod_{\beta\in \CB} \ol Z_\beta$ semi-small resolution of singularities $\ol\Sigma\to \ol Z$, with $c_n(M(\ol \Sigma))\in \textbf{D}_f^b(\textbf{Ab})$.  Then $c_n(M(\ol Z))\in \textbf{D}_f^b(\textbf{Ab})\subseteq \textbf{DM}_{gm}^{eff}(\ol k)$.\\

\noindent
Moreover:\\

c) Assume that $\hat{Z}$ is nett (resp. there is  a semi-small resolution $\ol\Sigma\to \ol Z$ with $M(\ol\Sigma)\in MT\textbf{DM}_{gm}^{eff}(\ol k)$). Then $M(Z)$ (resp. $M(\ol Z)$) lies in $MT\textbf{DM}_{gm}^{eff}(k)$ (resp. $MT\textbf{DM}_{gm}^{eff}(\ol k)$). \\

d) In either of the cases mentioned in c), Frobenius acts semi-simply on the cohomology $\Koh^i(\hat{Z},\BQ_\ell)$, here $\ell\neq p$ is a prime number.

\end{theorem}

--------------------------------
}

\begin{proof}

Let us first prove the following

\begin{lemma}\label{LemMixedArrangement}
Suppose $X=\cup_i X_i$ is a $c_n^{-1}(\textbf{D}_f^b(\textbf{Ab}))$-configuration then $c_n(M(X))$ lies in $\textbf{D}_f^b(\textbf{Ab})$. 
\end{lemma}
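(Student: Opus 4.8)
The plan is to prove Lemma~\ref{LemMixedArrangement} by induction on the number $r$ of irreducible components $X_i$, using the Mayer--Vietoris distinguished triangle for the motive with compact support together with the fact that $c_n(-)$ is a triangulated functor (so it carries distinguished triangles to distinguished triangles and preserves the thick subcategory $\textbf{D}_f^b(\textbf{Ab})$).

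\begin{proof}[Proof of Lemma~\ref{LemMixedArrangement}]
We argue by induction on the number $r$ of irreducible components of $X=\bigcup_{i=1}^r X_i$. If $r=1$ then $X=X_1$ is irreducible and $c_n(M(X))\in\textbf{D}_f^b(\textbf{Ab})$ by hypothesis~(i). Suppose the claim holds for configurations with fewer than $r$ components. Write $X'=\bigcup_{i=1}^{r-1}X_i$ and $X''=X_r$, so that $X=X'\cup X''$ and $X'\cap X''=\bigcup_{i=1}^{r-1}(X_i\cap X_r)$. For closed subschemes of a scheme one has the Mayer--Vietoris distinguished triangle
$$
M(X'\cap X'')\longrightarrow M(X')\oplus M(X'')\longrightarrow M(X)\longrightarrow M(X'\cap X'')[1]
$$
in $\textbf{DM}_{gm}^{eff}(k)$ (this is the standard consequence of the localization/excision triangles for Voevodsky motives; compare \cite[Chapter~5]{VV}). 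Applying the triangulated functor $c_n(-)$ of Definition~\ref{DefSliceFilt} yields a distinguished triangle
$$
c_n(M(X'\cap X''))\longrightarrow c_n(M(X'))\oplus c_n(M(X''))\longrightarrow c_n(M(X))\longrightarrow c_n(M(X'\cap X''))[1].
$$
Since $\textbf{D}_f^b(\textbf{Ab})$ is a thick (in particular triangulated) subcategory of $\textbf{DM}_{gm}^{eff}(k)$ by Definition-Remark~\ref{DefRemCn}(a), it suffices to know that the first two terms lie in $\textbf{D}_f^b(\textbf{Ab})$. Now $X''=X_r$ is irreducible, so $c_n(M(X''))\in\textbf{D}_f^b(\textbf{Ab})$ by~(i); the scheme $X'$ is a $c_n^{-1}(\textbf{D}_f^b(\textbf{Ab}))$-configuration with $r-1$ components, so $c_n(M(X'))\in\textbf{D}_f^b(\textbf{Ab})$ by the induction hypothesis; and $X'\cap X''=\bigcup_{i=1}^{r-1}X_{ir}$ is, by condition~(ii), either empty or again a $c_n^{-1}(\textbf{D}_f^b(\textbf{Ab}))$-configuration with fewer than $r$ components (it is a union of the intersection sets $X_{ir}$, to which clause~(ii) applies recursively), so by the induction hypothesis $c_n(M(X'\cap X''))\in\textbf{D}_f^b(\textbf{Ab})$ as well (the empty scheme gives the zero motive, which lies there trivially). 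Hence all three outer terms of the triangle lie in $\textbf{D}_f^b(\textbf{Ab})$, and therefore so does $c_n(M(X))$. This completes the induction.
\end{proof}

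The one point that needs care — and which I expect to be the main technical obstacle — is the bookkeeping in condition~(ii): one must make sure that when we split off $X_r$ and look at $X'\cap X_r=\bigcup_{i<r}X_{ir}$, the defining property of a $c_n^{-1}(\textbf{D}_f^b(\textbf{Ab}))$-configuration is genuinely inherited, i.e.\ that the relevant pairwise intersections $(X_{ir})\cap(X_{jr})=X_i\cap X_j\cap X_r$ again satisfy clause~(ii). This follows because such a triple intersection is a union of members of $\{X_i\cap X_j\}_{i\neq j}$ intersected further, so it falls under the recursive clause of Definition~\ref{DefMixedArrangement}; making this precise may require formulating the induction on the pair (number of components, configuration depth) in lexicographic order rather than on the number of components alone, but the structure of the argument is unchanged. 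A secondary point is simply to record that $c_n$ commutes with finite direct sums and sends $0$ to $0$, which is immediate from its construction as a triangulated functor in Definition~\ref{DefSliceFilt}.
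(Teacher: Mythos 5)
Your overall mechanism (apply the triangulated functor $c_n$ to a distinguished triangle relating $X$ to its pieces and use that $\textbf{D}_f^b(\textbf{Ab})$ is a thick triangulated subcategory) is exactly the engine of the paper's proof, and your Mayer--Vietoris triangle for the closed cover $X=X'\cup X_r$ is legitimate here: since $X$ and all the closed subschemes in sight are projective, you may identify $M$ with $M^c$ (coefficients are $\BQ$) and derive the closed Mayer--Vietoris triangle from the localization triangles for $M^c$, which is the same input the paper uses. However, there is a genuine gap in your induction scheme. You induct on the number $r$ of irreducible components and then invoke the inductive hypothesis for $X'\cap X_r=\bigcup_{i<r}X_{ir}$, asserting it is a configuration \emph{with fewer than $r$ components}. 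Clause (ii) of Definition \ref{DefMixedArrangement} does give directly that this union is a $c_n^{-1}(\textbf{D}_f^b(\textbf{Ab}))$-configuration (no recursion is needed for that, contrary to the worry in your closing paragraph), but the irreducible components of that configuration are \emph{not} among the $X_i$: the sets $X_{ir}$ need not be irreducible, and the number of irreducible pieces witnessing the configuration structure on $\bigcup_{i<r}X_{ir}$ is unrelated to, and can exceed, $r-1$. So the hypothesis "fewer than $r$ components" is unjustified and the induction does not close. Your proposed repair --- lexicographic induction on (number of components, "configuration depth") --- does not fix this: the first coordinate can increase when passing to $X'\cap X_r$, and "configuration depth" is not a quantity defined by Definition \ref{DefMixedArrangement}.

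The correct repair is to change the induction variable to the dimension, which is what the paper does: it inducts on $\dim X$, strips off the union $\bigcup_{i\neq j}X_{ij}$ of all pairwise intersections (a configuration of strictly smaller dimension, since $X_i\cap X_j$ is a proper closed subset of the irreducible $X_i$), and uses the localization triangles
$$
M^c\Bigl(\bigcup_{j \neq i}X_{ij}\Bigr) \rightarrow M^c(X_i) \rightarrow M^c\Bigl(X_i\setminus\bigcup_{j \neq i}X_{ij}\Bigr)
$$
together with the triangulatedness of $c_n$. Alternatively, you can keep your Mayer--Vietoris decomposition but run the induction lexicographically on the pair $(\dim X,\ \text{number of components})$ with dimension first: then $X'$ has the same or smaller dimension and one fewer component, while $X'\cap X_r$ has strictly smaller dimension, so both inductive appeals are legitimate. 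With that single change your argument becomes a correct, mildly different organization of the same proof.
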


\begin{proof}
We prove by induction on the dimension $d$ of the configuration. The statement is obvious for $d=0$. Suppose that the lemma holds for all $c_n^{-1}(\textbf{D}_f^b(\textbf{Ab}))$-configurations of dimension $r<m$. Let $X=X_1 \cup \dots \cup X_n$ be a configuration of dimension $m$. 
By induction hypothesis and localizing distinguished triangle, it is enough to show that $$c_n(M^c(\bigcup_{i=1}^n (X_i\setminus\bigcup_{i \neq j}X_{ij})))= \bigoplus_{i=1}^n c_n(M^c(X_i\setminus\bigcup_{i \neq j}X_{ij}))$$ lies in $\textbf{D}_f^b(\textbf{Ab})$. The later again follows from the localizing triangles 
$$
M^c(\bigcup_{i \neq j}X_{ij}) \rightarrow M^c(X_i) \rightarrow M^c(X_i\setminus\bigcup_{i \neq j}X_{ij}),
$$
and the fact that $c_n$ is a triangulated functor. The statement for configuration of mixed Tate varieties is similar.

\end{proof}

Now let us assume that $Z$ is irreducible. Then by definition there is a map $\pi: \Sigma\to Z$ which satisfies the conditions in Definition \ref{DefNett}.  We claim that $c_n(M(Z))$ lies in $\textbf{D}_f^b(\textbf{Ab})$. We do this by induction on $\dim Z$. We have the following localizing triangles

$$
M^c(\pi^{-1}(Z^\circ)) \to M^c(\Sigma)\to M^c(\cup_{\alpha\neq\alpha_0} \pi^{-1}(\ol Z_\alpha)),
$$
\noindent
corresponding to the inclusion $Z^\circ\hookrightarrow Z$ of the open stratum $Z^\circ=Z_{\alpha_0}$.
By definition of $\hat{Z}$, motivic Leray-Hirsch theorem \ref{Leray Hirsch for Voevodsky motives} and the induction hypothesis $\cup_{\alpha\neq\alpha_0} \pi^{-1}(\ol Z_\alpha)$ is a $c_n^{-1}(\textbf{D}_f^b(\textbf{Ab}))$-configuration. In particular the motivic fundamental invariant
$$
c_n(M(\cup_\alpha \pi^{-1}(\ol Z_\alpha)))
$$ 
lies in $\textbf{D}_f^b(\textbf{Ab})$; see the above lemma \ref{LemMixedArrangement}. As $c_n$ is triangulated, we argue that $c_n(M(\pi^{-1}(Z^\circ)))$ also lie in $\textbf{D}_f^b(\textbf{Ab})$ by generalized gysin triangle. 
Hence we see by theorem \ref{Leray Hirsch for Voevodsky motives} and Definition-Remark \ref{DefRemCn} (d) that 
$$
c_n(M^c(\pi^{-1}(Z^\circ)))=\oplus_{p+q=n}c_p(M^c(Z^\circ))\otimes c_q(M^c(F_{\alpha_0}))
$$ lies in $\textbf{D}_f^b(\textbf{Ab})$. Since $F_{\alpha_0}$ is pure Tate we may argue by proposition \ref{PropDecompPureTate} that $c_n(M(Z^\circ))$ lies in $\textbf{D}_f^b(\textbf{Ab})$. This together with the localizing triangle
$$
M^c(Z^\circ)\to M^c(Z)\to M^c(\cup_\alpha \ol Z_\alpha),
$$
and the fact that $\cup_\alpha \ol Z_\alpha$ is a $c_n^{-1}(\textbf{D}_f^b(\textbf{Ab}))$-configuration, complete our induction assertion for $Z$. Note in addition that, by definition $Z$ is proper and hence there is a canonical isomorphism $M^c(Z)\cong M(Z)$.\\
For non-irreducible $Z$ observe that the irreducible components of $Z$ form a $c_n^{-1}(\textbf{D}_f^b(\textbf{Ab}))$-configuration and therefore we deduce again that $c_n(M(Z))\in \textbf{D}_f^b(\textbf{Ab})$.\\

b) For semi-small resolution $\varrho: \ol\Sigma\to\ol Z$, the decomposition theorem gives the following decomposition of 
$$
Rf_\ast \BQ_\Sigma[n] := \bigoplus_{a\in \CA}
IC_{\ol Z_a}(L_a).
$$
Here $\CA$ denotes the set of admissible strata, i.e. 
$$
\CA := \{\alpha \in \CB ;~ 2 \dim \varrho^{-1}(z) = \dim \ol Z - \dim \ol Z_\alpha, \forall z \in \ol Z_\alpha\}
$$ 
and $L_a$ are the semisimple local systems on $Z_a$ given by the monodromy action on
the maximal dimensional irreducible components of the fibers of $\varrho$ over $\ol Z_\alpha$. The semi-smallness of the resolution $\varrho$, implies that the dimension of every irreducible component of $\Sigma\times_Z\Sigma$ is less than $\dim \Sigma$, which consequently implies that the cycle class map is an isomorphism. From this and Proposition \ref{PropMotivicDecomposition} we deduce that the above decomposition induces a motivic decomposition. Now since $M(\ol Z)$ appears as a summand of $M(\ol \Sigma)$, we conclude that $c_n(M(\ol Z))$ lies in $\textbf{D}_f^b(\textbf{Ab})$.

c)
follows from a) and b) together with the following vanishing observation of Huber and Kahn \cite[Proposition 4.6]{H-K}.

\begin{lemma}\label{Lem_MTVanishingResult}

The motive $M$ in $\textbf{DM}_{gm}^{eff}(k)$ lies in $MT\textbf{DM}_{gm}^{eff}(k)$ if and only if the motivic invariants $c_n(M)$ lie in $\textbf{D}_f^b(\textbf{Ab})$ for all $n$ and $c_n(M)$ vanishes for large enough $n$.

\end{lemma}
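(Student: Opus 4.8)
The plan is to obtain Lemma~\ref{Lem_MTVanishingResult} purely formally from the properties of the slice filtration recorded in Definition~\ref{DefSliceFilt} and Definition-Remark~\ref{DefRemCn}; this statement is exactly \cite[Proposition~4.6]{H-K}, so in the text I would simply cite it, but the argument I have in mind runs as follows. For the implication ``$c_n(M)\in\textbf{D}_f^b(\textbf{Ab})$ for all $n$ and $c_n(M)=0$ for $n\gg0$'' $\Rightarrow$ ``$M$ mixed Tate'': the defining triangles exhibit $M$ as glued from its slices $\nu_nM=c_n(M)(n)[2n]$. Effectivity of $M$ gives $\nu_{<0}M=0$, and the hypothesis $\nu_nM=0$ for $n\gg0$, combined with the standard properties of the slice filtration on geometric motives (finiteness of the filtration and conservativity) established in \cite{H-K}, shows $\nu_{<m}M\simeq M$ for $m$ large. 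Hence $M$ is a finite iterated extension of $\nu_0M,\dots,\nu_NM$. By Definition-Remark~\ref{DefRemCn}(1), $\textbf{D}_f^b(\textbf{Ab})$ is the thick subcategory of $\textbf{DM}_{gm}^{eff}(k)$ generated by $\BZ(0)$, so $c_n(M)$ lies there, and each slice $\nu_nM$ — a Tate twist and shift of an object of $\textbf{D}_f^b(\textbf{Ab})$ — lies in $MT\textbf{DM}_{gm}^{eff}(k)$; since the latter is thick, $M\in MT\textbf{DM}_{gm}^{eff}(k)$.

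For the converse I would argue that a geometric mixed Tate motive $M$ is built, by finitely many cones and retracts, from objects $\BZ(a)[b]$ with $0\le a\le N$ for a suitable $N$. A direct computation with the triangles of Definition~\ref{DefSliceFilt} shows $c_m(\BZ(a)[b])=0$ for $m\neq a$ and that $c_a(\BZ(a)[b])$ is a shift of $\BZ(0)$; in particular all of these motivic invariants lie in $\textbf{D}_f^b(\textbf{Ab})$ and vanish for $m>N$. Since $c_m$ is a triangulated functor — a fact already used in the proof of part~a) — and $\textbf{D}_f^b(\textbf{Ab})$ is thick, pushing $c_m$ through the construction of $M$ gives $c_m(M)\in\textbf{D}_f^b(\textbf{Ab})$ for all $m$ and $c_m(M)=0$ for $m>N$. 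The one genuinely non-formal ingredient, and the step I expect to be the real obstacle in a self-contained treatment, is precisely the behaviour of the slice filtration on $\textbf{DM}_{gm}^{eff}(k)$ (conservativity, finiteness, and the bookkeeping of shifts in the defining triangles); this is what \cite{H-K} proves, which is why a citation is the honest move here.

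It remains to record how Lemma~\ref{Lem_MTVanishingResult} closes the proof of Theorem~\ref{ThmFrobistSemiSimpleOnZ}. For part~c), parts a) and b) already give $c_n(M(Z))\in\textbf{D}_f^b(\textbf{Ab})$ for every $n$, while $c_n(M(Z))=0$ as soon as $n$ exceeds $\dim Z$ since $Z$ is finite-dimensional (and proper under the ``nett'' hypothesis, so $M(Z)\cong M^c(Z)$); thus Lemma~\ref{Lem_MTVanishingResult} yields $M(Z)\in MT\textbf{DM}_{gm}^{eff}(k)$, and similarly $M(\ol Z)\in MT\textbf{DM}_{gm}^{eff}(\ol k)$ in the semi-small resolution case. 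For part~d) one passes to $\ell$-adic realisation: since $M(Z)$ is mixed Tate, the cohomology $\Koh^i(\hat{Z},\BQ_\ell)=\Koh^i(Z,\BQ_\ell)$ carries a finite weight filtration whose graded pieces are direct sums of copies of $\BQ_\ell(-n)$, on which geometric Frobenius acts by the scalar $q^n$ (with $q$ the size of the relevant residue field). Because these scalars are pairwise distinct for distinct $n$, the generalised eigenspace decomposition of Frobenius on $\Koh^i(\hat{Z},\BQ_\ell)$ is compatible with the weight filtration and each eigenspace must coincide with a single graded piece, on which Frobenius is a scalar; hence Frobenius acts semisimply on $\Koh^i(\hat{Z},\BQ_\ell)$, which is part~d).
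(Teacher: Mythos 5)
For the lemma itself your proposal coincides with the paper's treatment: the paper offers no independent proof but attributes the statement to Huber--Kahn, citing \cite[Proposition~4.6]{H-K}, and your sketch of the two implications (d\'evissage of a mixed Tate object into the $\BZ(a)[b]$, for which $c_m$ is concentrated in $m=a$, and conversely reassembling $M$ from its slices using effectivity together with the finiteness/conservativity of the slice filtration on geometric motives, which you correctly isolate as the genuinely non-formal input proved in \cite{H-K}) is exactly the argument behind that citation. So on the statement under review you agree with the paper.

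The one point to flag concerns your supplementary re-derivation of part d) of Theorem~\ref{ThmFrobistSemiSimpleOnZ}, which departs from the paper and has a gap. You assert that $\Koh^i(Z,\BQ_\ell)$ carries a weight filtration whose graded pieces are \emph{direct sums} of copies of $\BQ_\ell(-n)$; your distinct-eigenvalue argument then only separates different weights from one another. But semisimplicity of Frobenius \emph{within} a fixed weight is precisely the issue, and it does not follow from $M(Z)$ being an iterated extension of Tate objects: over a finite field one has $\Ext^1(\BQ_\ell(n),\BQ_\ell(n))\neq 0$ in continuous Galois representations (unipotent Frobenius), so an iterated extension of Tate twists need not have semisimple weight-graded pieces, and the realization functor is not full enough to rule this out. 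The paper closes this gap at the motivic level: by Quillen's computation of the $K$-theory of finite fields, $\Hom(\BQ(a),\BQ(b)[n])$ vanishes rationally for $n>0$, so the category of mixed Tate motives over $\BF_q$ is semisimple with simple objects the $\BQ(n)$'s; hence $M(Z)$ is already a direct sum of objects $\BQ(n)[m]$ in the motivic category, and its $\ell$-adic realization is a semisimple Frobenius module. Your argument becomes correct once this motivic splitting (or some other justification of semisimplicity of the weight-graded pieces) is inserted before passing to the realization.
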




d) For this part of the theorem first notice that the bound $\hat{Z}\subseteq \wh\CF\ell_\BP$ is projective, therefore $\hat{Z}$ is algebraizable, see \cite[III,~Thm. 5.4.5]{EGA}, that is it comes by base change from a scheme over $\Spec R_{\hat{Z}}$. By abuse of notation we denote the latter again by $\hat{Z}$. By proper base change theorem there is an isomorphism
$$
\Koh_{\text{\'et}}^q(\hat{Z},\BQ_\ell)\tilde{\to} \Koh_{\text{\'et}}^q(Z,\BQ_\ell),
$$
e.g. see \cite[Ch. VI Cor. 2.7]{Milne1980}. As we observed in c), when $\hat{Z}$ is nett then $M(Z)$ lies in $MT\textbf{DM}_{gm}^{eff}(k)$. So we can conclude by the fact that the abelian category of mixed Tate motives 
over a finite field is semi-simple and its simple objets are the $\BQ(n)$'s. \forget{Thus every
Tate motive is just a direct sum of $\BQ(n)$'s with some shift.}The latter fact follows from the knowledge of the
K-theory of finite fields, which gives the vanishings of the Ext groups $Ext^n(\BQ(a),\BQ(b))=0$ for $n>0$. Recall that $K_{2i}(\BF_q) = 0$ and $$K_{2i-1}(\BF_q) = \BZ/(q^i-1)$$ according to Quillen \cite{Qui}, and therefore they vanish after passing to rational coefficients. \\  

Finally when there is a resolution as in b) then we observed that the motive $M(\ol Z)$ is geometrically mixed Tate. This implies that $M(Z)$ is mixed Tate after a finite extension $L/k$; see Lemma \ref{LemGeoMixedTate}. This shows that some power $F^n$ of Ferobenius is semi-simple, which implies that $F$ is semi-simple.

\end{proof}

\begin{definition}\label{DefRelNett}
Assume that $\CX\to\CY$ admits a stratification $\{\CY_\alpha \}$ such that $\CX_\alpha:= f^{-1}(\CY_\alpha)\to \CY_\alpha$ is a fiber bundle whose fiber $\CS_\alpha$ lies in $\CC_{\ll}$. We say that $\CX\to\CY$ is relatively nett with respect to the stratification $\{\CY_\alpha \}$.

\end{definition}

\begin{lemma}\label{LemRelNett}
Assume that $\CX\to\CY$ is relatively nett with respect to the stratification $\{\CY_\alpha \}$. Then the class of the motive $M^c(\CX)$ inside the Grothendieck ring $K_0[DM_{-}^{eff}(k)]$ lies in the ring generated by the Tate object $\BL$ and $[M^c(\CY_\alpha)]$.
\end{lemma}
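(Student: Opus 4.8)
The plan is to reduce the statement to a finite induction over the strata $\{\CY_\alpha\}$, using the additivity of the class $[M^c(-)]$ in the Grothendieck ring under localization together with the multiplicativity of $[M^c(-)]$ for Zariski-locally trivial fiber bundles with ``cellular-type'' fibers. First I would recall that for a closed immersion $\CZ \hookrightarrow \CX$ with open complement $\CU$ one has the localization triangle $M^c(\CU) \to M^c(\CX) \to M^c(\CZ) \to M^c(\CU)[1]$, hence in $K_0[DM_-^{eff}(k)]$ the identity $[M^c(\CX)] = [M^c(\CU)] + [M^c(\CZ)]$. Applying this successively along the stratification $\CX = \coprod_\alpha \CX_\alpha$ (ordering the strata so that each $\CX_\alpha$ is open in its closure within the union of the remaining ones), one obtains
$$
[M^c(\CX)] \;=\; \sum_\alpha [M^c(\CX_\alpha)]
$$
in $K_0[DM_-^{eff}(k)]$, where $\CX_\alpha = f^{-1}(\CY_\alpha) \to \CY_\alpha$ is the fiber bundle with fiber $\CS_\alpha \in \CC_\ll$.

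Next I would treat each term $[M^c(\CX_\alpha)]$ separately. Since $\CS_\alpha$ lies in $\CC_\ll$, its motive is mixed Tate (this is part of the content of Theorem \ref{ThmFrobistSemiSimpleOnZ}(c), applied to the special fiber, or more directly the construction of $\CC_\ll$ via iterated towers of cellular fibrations); by Propositions \ref{Prop_DecompRelCel} and \ref{PropDecompPureTate} and the finiteness of the Chow groups (Proposition \ref{PropChowMixedTateIsFinite}), the class $[M^c(\CS_\alpha)]$ is a polynomial in $\BL$ with integer coefficients, say $[M^c(\CS_\alpha)] = P_\alpha(\BL)$. The fibration $\CX_\alpha \to \CY_\alpha$ is Zariski-locally trivial with such a fiber — here one uses the motivic Leray--Hirsch theorem \ref{Leray Hirsch for Voevodsky motives} stratum-wise, or rather its compact-support analog — and therefore, refining $\CY_\alpha$ into a finite stratification over which the bundle is trivial and re-applying the additivity above, one gets $[M^c(\CX_\alpha)] = P_\alpha(\BL)\cdot [M^c(\CY_\alpha)]$. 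Summing over $\alpha$ yields
$$
[M^c(\CX)] \;=\; \sum_\alpha P_\alpha(\BL)\cdot [M^c(\CY_\alpha)],
$$
which manifestly lies in the subring of $K_0[DM_-^{eff}(k)]$ generated by $\BL$ and the classes $[M^c(\CY_\alpha)]$.

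The main obstacle is the passage from a general Zariski-locally trivial fibration with fiber $\CS_\alpha$ to the product formula $[M^c(\CX_\alpha)] = [M^c(\CS_\alpha)]\cdot[M^c(\CY_\alpha)]$ at the level of the Grothendieck ring. Over a point this is Proposition \ref{Prop_DecompRelCel}, and Leray--Hirsch (Theorem \ref{Leray Hirsch for Voevodsky motives}) handles the case where $\CY_\alpha$ is smooth irreducible and the fibration is \emph{proper} smooth with cellular fiber satisfying Poincar\'e duality; but $\CS_\alpha$ need only be an iterated tower of cellular fibrations and $\CY_\alpha$ need not be smooth. The clean way around this is to pass to a common refinement of the stratification over which the bundle becomes Zariski-locally (indeed, after further stratification, globally) trivial, so that each piece is literally a product $\CS_\alpha' \times \CY_\alpha'$; then $[M^c(\CS_\alpha' \times \CY_\alpha')] = [M^c(\CS_\alpha')]\cdot[M^c(\CY_\alpha')]$ by the K\"unneth formula for motives with compact support, and $[M^c(\CS_\alpha')] = P_\alpha(\BL)$ is independent of the chosen trivializing piece. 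Reassembling via the additivity of $[M^c(-)]$ then gives the claim, with all the $[M^c(\CY_\alpha')]$ appearing being classes of locally closed subschemes of the original $\CY_\alpha$, hence (again by additivity) expressible through the $[M^c(\CY_\alpha)]$ and $\BL$. One should also note that finiteness of the stratifications throughout (which holds since $\CX\to\CY$ is a morphism of schemes of finite type) is what keeps all sums finite and the argument inside $K_0$ legitimate.
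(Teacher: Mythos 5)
Your argument follows essentially the same route as the paper's (very terse) proof: stratify $\CX$, use localization/Gysin triangles to get additivity of $[M^c(-)]$ in $K_0[DM_{-}^{eff}(k)]$, and then extract the fibre contribution stratum by stratum; the paper merely delegates this bookkeeping to the Gysin triangle, Theorem \ref{ThmFrobistSemiSimpleOnZ} and the cited result of Gillet--Soul\'e, so your write-up is a legitimate expansion of that sketch.

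One step deserves a caveat. Your passage from local to global triviality --- ``refine $\CY_\alpha$ into a finite stratification over which the bundle is trivial, then apply K\"unneth'' --- is only valid if the fibre bundles in Definition \ref{DefRelNett} are Zariski-locally trivial; in that case Noetherian induction does produce a finite locally closed decomposition trivializing the bundle, and your identity $[M^c(\CX_\alpha)]=P_\alpha(\BL)\cdot[M^c(\CY_\alpha)]$ goes through. But the fibrations this lemma is designed for (e.g.\ $Hecke_n(C,\FG)$ over $\scrH^1(C,\FG)\times C^n$, cf.\ Definition-Remark \ref{Def-RemHecke}\ref{heckeislocallGr}, and the convolution-type fibrations of Remark \ref{Rem_IteratedTower}) are in general only \'etale-locally trivial, and an \'etale-locally trivial bundle cannot be trivialized by refining the stratification of the base. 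In that generality you need a substitute for the refinement step: either a Leray--Hirsch-type argument as in Theorem \ref{Leray Hirsch for Voevodsky motives}, which exploits the cellularity of the fibres rather than a Zariski trivialization, or the $K_0$/weight-complex formalism of Gillet--Soul\'e that the paper invokes for exactly this purpose. A second, minor point: the claim that $[M^c(\CS_\alpha)]$ is a polynomial in $\BL$ does not really follow from Proposition \ref{PropChowMixedTateIsFinite} (finite generation of Chow groups says nothing about the $K_0$-class); it follows from the recursive cellular structure of objects of $\CC_\ll$ by the same induction with localization triangles you use elsewhere, or, with $\BQ$-coefficients over a finite field, from the semisimplicity of the mixed Tate category used in the proof of Theorem \ref{ThmFrobistSemiSimpleOnZ}.
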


\begin{proof}
This follows from the definition \ref{DefRelNett} and theorem \ref{ThmFrobistSemiSimpleOnZ}, gysin triangle and the following result of Gillet and Soul\'e \cite[Proposition 3.2.2.5]{GS}.
\end{proof}

\subsection{Motives and Moduli of $G$-shtukas}\label{SubSectModuliOfG-Sht}

\begin{proposition}
The motive of the stack $\nabla_n^{\CZ}\scrH_D^1(C,\FG)$ lies in the category of geometric motives $DM_{gm}(k)$.
\end{proposition}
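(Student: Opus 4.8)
The plan is to exhaust the stack by quasi-compact pieces, present each of these by \emph{finite} groups, and then glue by localization.

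First I would record that, by Definition-Remark~\ref{Def-RemNblaH} (cf.\ \cite[Theorem~3.15]{AH_Global}), $\nabla_n^{\CZ}\scrH_D^1(C,\FG)$ is a separated Deligne--Mumford stack, locally of finite type over $k$: the presence of the bound $\CZ$ makes the modification datum bounded, so the only remaining ind-structure comes from the exhaustion of $\scrH^1(C,\FG)$ by its quasi-compact open substacks $\scrH_\alpha^1$ (Definition-Remark~\ref{Def-RemH1}). Pulling this exhaustion back, $\nabla_n^{\CZ}\scrH_D^1(C,\FG)$ is the filtered union of quasi-compact open substacks $\CY$ which are Deligne--Mumford and of finite type over $k$. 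Since $M$ takes such a filtered union of opens to the corresponding filtered colimit, it suffices to prove that $M(\CY)$ — and, for the gluing step, $M^c(\CY)$ — lies in $DM_{gm}(k)$ for each such $\CY$; passing to the exhausting family then gives the statement for the whole stack.

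Next I would analyse a fixed such $\CY$. By Definition-Remark~\ref{Def-RemHecke}\,(\ref{heckeislocallGr}) the structure morphism $\nabla_n^{\CZ}\scrH_D^1(C,\FG)\to\scrH_D^1(C,\FG)\times(C\setminus D)^n$ is étale-locally on the target modelled on the quasi-projective $k$-scheme $\CZ$, hence representable and quasi-projective; combined with the fact that each $\scrH_\alpha^1$ is a quotient of a quasi-projective $k$-scheme by a linear algebraic group (\cite[Theorem~2.5]{AH_Global}, \cite{Heinloth}), this shows that $\CY$ is a finite-type separated Deligne--Mumford stack with quasi-projective coarse moduli space. For such a stack I would choose a finite stratification $\CY=\coprod_i\CY_i$ into locally closed substacks, each isomorphic to a quotient stack $[U_i/\Gamma_i]$ with $U_i$ a quasi-projective $k$-scheme and $\Gamma_i$ a finite (étale) group scheme over $k$ — by noetherian induction, using that a reduced Deligne--Mumford stack of finite type over a field contains a dense open substack of this form. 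Because we work with $\BQ$-coefficients, the atlas $U_i\to[U_i/\Gamma_i]$ is finite étale, so the associated transfer projector realizes $M([U_i/\Gamma_i])$, resp.\ $M^c([U_i/\Gamma_i])$, as a direct summand of $M(U_i)$, resp.\ $M^c(U_i)$. As $U_i$ is a $k$-scheme of finite type, $M(U_i)$ and $M^c(U_i)$ lie in $DM_{gm}(k)$ by \cite{VV}, and $DM_{gm}(k)$ is idempotent complete; hence $M([U_i/\Gamma_i]),M^c([U_i/\Gamma_i])\in DM_{gm}(k)$.

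Finally I would assemble these, ordering the strata so that $\CY_i$ is open in $\CY\setminus(\CY_1\cup\cdots\cup\CY_{i-1})$: the localization distinguished triangles for motives with compact support (valid for finite-type Deligne--Mumford stacks, by descent along a finite étale atlas) exhibit $M^c(\CY)$ as an iterated extension of the $M^c(\CY_i)\in DM_{gm}(k)$, and since $DM_{gm}(k)$ is a triangulated subcategory we get $M^c(\CY)\in DM_{gm}(k)$; the same reduction to the scheme case (where the motives of all finite-type $k$-schemes, singular or not, are geometric) gives $M(\CY)\in DM_{gm}(k)$. Passing to the exhausting family of quasi-compact opens yields the claim. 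I expect the crux — and the only genuinely delicate point — to be the finite-group step of the second paragraph, i.e.\ replacing the a priori non-geometric presentation of $\CY$ as a $\GL_N$-quotient (for which $M(B\GL_N)$ is an infinite Tate sum) by a stratification into quotients by \emph{finite} group schemes; this is exactly where the Deligne--Mumford property of $\nabla_n^{\CZ}\scrH_D^1(C,\FG)$ — equivalently, the rigidifying effect of the Frobenius (shtuka) condition on automorphisms — enters, together with the quasi-projectivity supplied by the local model $\CZ$. A secondary point requiring care is the formalism of motives, and of the relevant localization triangles, for Deligne--Mumford stacks, and its compatibility with the exhaustion.
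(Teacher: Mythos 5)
Your proposal is correct in substance but follows a genuinely different route from the paper. The paper's own proof is very short: it records that $\CX:=\nabla_n^{\CZ}\scrH_D^1(C,\FG)$ is a separated Deligne--Mumford stack, locally of finite type, whose inertia stack $I(\CX)$ is \emph{finite} over $\CX$ (citing \cite[Theorem~3.15, Corollary~3.16]{AH_Global} and \cite{AH_LM}), invokes the Keel--Mori theorem \cite{Conrad} to produce a separated coarse moduli space $X$, and concludes from the natural isomorphism $M(\CX)\cong M(X)$ with $\BQ$-coefficients. You instead bypass the coarse space: you stratify each quasi-compact open substack into quotient stacks $[U_i/\Gamma_i]$ by finite groups, use the transfer projector (again crucially with $\BQ$-coefficients) to realize $M([U_i/\Gamma_i])$ and $M^c([U_i/\Gamma_i])$ as summands of $M(U_i)$, $M^c(U_i)$, and glue by localization triangles. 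Your version is more self-contained on the motivic side --- it never has to define or compare motives of algebraic spaces, and it makes explicit where rational coefficients and idempotent completeness enter --- but the ``finite-group step'' you single out as the crux is not free: the existence of a dense open substack of the form $[U/\Gamma]$ with $\Gamma$ finite rests on the same structure theory (finiteness of inertia plus the étale-local quotient presentation underlying Keel--Mori) that the paper consumes in one stroke via the coarse space; so the two arguments use essentially the same geometric input at different points, and you should cite the finiteness of $I(\CX)$ explicitly, as the paper does. One caveat applies equally to both proofs: the bounded stack is only \emph{locally} of finite type, so its motive is a filtered colimit over the quasi-compact opens $\CY$, and an infinite colimit of geometric motives need not be geometric (geometric motives are the compact objects); your final ``passing to the exhausting family'' therefore proves geometricity only of each $M(\CY)$, which is exactly the level of precision at which the paper's own proof operates.
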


\begin{proof}

The stack of bounded $\FG$-shtukas $\CX:=\nabla_n^{\CZ}\scrH_D^1(C,\FG)$ is locally of finite type and Deligne-Mumford. See  \cite[theorem 3.15]{AH_Global} and \cite[theorem 3.1.7]{AH_LM}. Moreover it is seperated by \cite[Theorem 3.15]{AH_Global}. In addition the inertia stack $I(\CX)$ is finite over $\CX$ by \cite[Corollary~3.16]{AH_Global}. Therefore by Keel-Mori Theorem it admits a coarse moduli space $X$ which is separated. See \cite{Conrad}. The motive $M(\CX)$ is naturally isomorphic to $M(X)$.

\end{proof}

Note that the above method, relies on the existence of coarse moduli space, can not be implemented to construct the category of mixed motives over $\nabla_n^{\CZ}\scrH^1(C,\FG)$, as well as the corresponding intersection motives. For this purpose we need some further preliminaries which we recall from \cite{AH_LM}. Moreover as we will describe below, for the sake of simplicity, we restrict our attention to the local case.\\

\noindent
Let us first recall the definition of the category of local $\BP$-shtukas.

\begin{definition}\label{localSht}

Let $\CX$ be the fiber product 
$$
\scrH^1(\Spec \BaseOfD,L^+\BP)\times_{\scrH^1(\Spec \BaseOfD,L\genericG)} \scrH^1(\Spec \BaseOfD,L^+\BP)
$$ 
of groupoids. Let $pr_i$ denote the projection onto the $i$-th factor. We define the groupoid of \emph{local $\BP$-shtukas} $\Sht_{\BP}^{\BD}$ to be 
$$
\Sht_{\BP}^{\BD}\;:=\;\equi\left(\hat{\sigma}\circ pr_1,pr_2\colon \CX \rightrightarrows \scrH^1(\Spec \BaseOfD,L^+\BP)\right)\whtimes_{\Spec\BaseOfD}\Spf\BaseOfD\dbl\zeta\dbr.
$$
where $\hat{\sigma}:=\hat{\sigma}_{\scrH^1(\Spec \BaseOfD,L^+\BP)}$ is the absolute $\BaseOfD$-Frobenius of $\scrH^1(\Spec \BaseOfD,L^+\BP)$. The category $\Sht_{\BP}^{\BD}$ is fibered in groupoids over the category $\Nilp_{\BaseOfD\dbl\zeta\dbr}$ of $\BaseOfD\dbl\zeta\dbr$-schemes on which $\zeta$ is locally nilpotent. We call an object of the category $\Sht_{\BP}^{\BD}(S)$ a \emph{local $\BP$-shtuka over $S$}. More explicitly a local $\BP$-shtuka over $S\in \Nilp_{\BaseOfD\dbl\zeta\dbr}$ is a pair $\ul \CL = (\CL_+,\tauLoc)$ consisting of an $L^+\BP$-torsor $\CL_+$ on $S$ and an isomorphism of the associated loop group torsors $\tauLoc\colon  \hat{\sigma}^\ast \CL \to\CL$. 
\end{definition}

Set $\BP_{\nu_i}:=\FG\times_C\Spec \wh A_{\nu_i}$ and $\hat\BP_{\nu_i}:=\FG\times_C\Spf \wh A_{\nu_i}$. Recall from \cite[Section~5.2]{AH_Local} that to a global $\FG$-shtuka one can associate a tuple of local $\BP$-shtuka at characteristic places. Namely, there is \emph{global-local functor} 

\begin{eqnarray}\label{EqGlobLocFunctor}
\qquad\ul{\wh\Gamma}\;:=\; \prod_i\wh\Gamma_{\nu_i}\colon \es \nabla_n\scrH^1(C,\FG)^{\ul\nu}(S) & \longto & \prod_i \Sht_{\BP_{\nu_i}}^{\Spec \wh A_{\nu_i}}(S)\, ,\label{G-LFunc}
\end{eqnarray}
This mirrors the assignment of a $p$-divisible group to an abelian variety over $\BF_q$ (this more generally mirrors the crystalline realization of a motive).  One may use this assignment to impose boundedness conditions to the moduli of $\FG$-shtukas.

\begin{definition-remark}\label{DefBCOnGShtAndLevelStr}

\begin{enumerate}
\item
Let $\hat{Z}$ be a bound with reflex ring $R_{\hat{Z}}$. Let $\CL_+$ and $\CL_+'$ be $L^+\BP$-torsors over a scheme $S$ in $\Nilp_{R_{\hat{Z}}}$ and let $\delta\colon \CL\isoto\CL'$ be an isomorphism of the associated $L\genericG$-torsors. We consider an \'etale covering $S'\to S$ over which trivializations $\alpha\colon\CL_+\isoto(L^+\BP)_{S'}$ and $\alpha'\colon\CL'_+\isoto(L^+\BP)_{S'}$ exist. Then the automorphism $\alpha'\circ\delta\circ\alpha^{-1}$ of $(L\genericG)_{S'}$ corresponds to a morphism $S'\to L\genericG\whtimes_\BaseOfD\Spf R_{\hat{Z}}$. We say that $\delta$ satisfies \emph{local boundedness condition} (\textbf{LBC}) by $\hat{Z}$ if for any such trivialization and for all finite extensions $R$ of $\BaseOfD\dbl\zeta\dbr$ over which a representative $\hat{Z}_R$ of $\hat{Z}$ exists, the induced morphism $S'\whtimes_{R_{\hat{Z}}}\Spf R\to L\genericG\whtimes_\BaseOfD\Spf R\to \wh{\SpaceFl}_{\BP,R}$ factors through $\hat{Z}_R$. Furthermore we say that a local $\BP$-shtuka $(\CL_+, \tauLoc)$ is \emph{bounded by $\hat{Z}$} if the isomorphism $\tauLoc^{-1}$ satisfies LBC by $\hat{Z}$.\\

\item

Fix an $n$-tuple $\ul\nu=(\nu_i)$ of places on the curve $C$ with $\nu_i\ne\nu_j$ for $i\ne j$. Let $\ul{\hat{Z}}:=(\hat{Z}_i)_i$ be an $n$-tuple of bounds in the above sense and set $R_\ul{\hat{Z}}:=R_{\hat{Z}_1}\hat{\otimes}_{\BF_q}\dots \hat{\otimes}_{\BF_q} R_{\hat{Z}_n}$. We say that a tuple $(\CG,\CG',\ul s,\phi)$ in $Hecke_n(C,\FG)^\ul\nu\times_{\wh A_\ul\nu}\Spf R_\ul{\hat{Z}}$ is bounded by $\ul{\hat{Z}}$ if for each $i$ the inverse $\wh\phi_{\nu_i}^{-1}$ of the associated isomorphism $\wh\phi_{\nu_i}:=\L_{\nu_i}(\phi_i):\L_{\nu_i}\CG'\to \L_{\nu_i}\CG$ of $LP_{\nu_i}$-torsors at $\nu_i$ satisfies LBC by $\ul{\hat{Z}}$ in the above sense. We denote the resulting formal stack by $Hecke_n^{\ul{\hat{Z}}}(C,\FG)$, and sometimes we abbreviate this notation by $Hecke_n^{\ul{\hat{Z}}}$.\\
Similarly we say that a $\FG$-shtuka $\ul\CG$ in $\nabla_n\scrH^1(C,\FG)^{\ul\nu}(S)$ is bounded by $\ul{\hat{Z}}$ if for every $i$ the associated local $\BP_{\nu_i}$-shtuka $\ul\CL_i$ is bounded by $\hat{Z}_i$. Here we set $(\ul\CL_i):=\ul{\wh\Gamma}(\ul\CG)$. We denote the moduli stack obtained by imposing these boundedness conditions by $\nabla_n^{\ul{\hat{Z}}}\scrH^1_D(C,\FG)$. 
\item
Furthermore, using tannakian formalism one can equip this moduli stack with $H$-level structure, for a compact open subgroup $H\subset \FG (\BA_Q^{\ul\nu})$. Here $\BA_Q^{\ul \nu}$ is the ring of adeles of $C$ outside the fixed $n$-tuple $\ul\nu:=(\nu_i)_i$ of places $\nu_i$ on $C$. For detailed account on $H$-level structures on a global $\FG$-shtuka, we refer the reader to \cite[Chapter~6]{AH_Global}. We denote the resulting moduli stack by $\nabla_n^{H, \ul{\hat{Z}}}\scrH^1(C,\FG)$.

\item
For $(\CG,\CG',\ul s,\tau)$ in $Hecke_n(C,\FG)$, one can control the relative position of $\CG$ and $\CG'$ under $\tau^{-1}$ also by means of global boundedness condition $\CZ$, see \cite[Definition~3.1.3]{AH_LM}. This leads to definition of the algebraic stack $Hecke_{n,D}^\CZ(C,\FG)$ and $\nabla_n^\CZ\scrH_D^1(C,\FG)$. We sometimes use the subscript in our notation $\alpha$ $\nabla_n^\CZ\scrH_D^1(C,\FG)$ (resp. $Hecke_{n,D}^\CZ(C,\FG)$) to denote the corresponding moduli stacks obtained by restricting the above constructions to the open substack $\scrH_\alpha^1$ of the stack $\scrH^1(C,\FG)$; see definition-remark \ref{Def-RemH1}.

\end{enumerate}
\end{definition-remark}

As a significant feature of the assignment \ref{EqGlobLocFunctor} one can prove that the deformation theory of a global $\FG$-shtuka can be read of the associated local $\BP$-shtukas via $\wh{\ul\Gamma}$. Let us explain it a bit more explicitly. Let $S \in \Nilp_{\wh A_\ul\nu}$ and let $j: \ol S \to S$ be a closed subscheme defined by a locally nilpotent sheaf of ideals $\CI$. Let $\ol{\ul\CG}$ be a global $\FG$-shtuka $\nabla_n\scrH^1(C,\FG)^{\ul \nu}(\bar{S})$. We let $Defo_S(\bar{\ul{\cG}})$ denote the category of infinitesimal deformations of $\ol{\ul\CG}$ over $S$. Similarly for a local $\BP$-shtuka $\bar{\CL}$ in $\Sht_\BP^\BD(S)$ we define the category of lifts  $Defo_S(\bar{\ul\CL})$ of $\bar{\ul\CL}$ to $S$. Then one can prove that Let $\bar{\ul{\cG}}:=(\bar{\CG},\bar{\tau})$ be a global $\FG$-shtuka in $\nabla_n\scrH^1(C,\FG)^{\ul \nu}(\bar{S})$. Then the functor 
$$
Defo_S(\bar{\ul{\cG}})\longto \prod_i Defo_S(\ul{\bar\CL}_i)\,,\quad \bigl(\ul\CG,\alpha)\longmapsto(\ul{\wh\Gamma}(\ul\CG),\ul{\wh\Gamma}(\alpha)\bigr)
$$ 
induced by the global-local functor (\ref{G-LFunc}), is an equivalence of categories. Here $(\ul{\bar\CL}_i)_i$ denote the tuple $\wh{\ul\Gamma}(\bar{\ul{\cG}})$. This mirrors the Serre-Tate's theorem for abelian varieties. Based on this observation one can proceed by proving the following result

\begin{theorem}\label{PropLocalModelHecke}

Fix an n-tuple of bounds $\ul{\hat{Z}}:=(\hat{Z}_i)_i$ and let $\hat{Z}_{i,R_{\nu_i}}$ be a representative of $\hat{Z}_i$ over $R_{\nu_i}$. Set $R_{\hat{Z}_\ul \nu} :=R_{\hat{Z}_1}\hat{\otimes}_{\BF_q}\dots \hat{\otimes}_{\BF_q} R_{\hat{Z}_n}$ and $R_\ul \nu :=R_{\nu_1}\hat{\otimes}_{\BF_q}\dots \hat{\otimes}_{\BF_q}R_{\nu_n}$. We have the following statements

\begin{enumerate}
\item
There is a formal algebraic stack $\wt{Hecke}_{R_{\ul\nu}}^{\ul{\hat{Z}}}$ and roof of morphisms 
\begin{equation}\label{HeckeRoof}
\xygraph{
!{<0cm,0cm>;<1cm,0cm>:<0cm,1cm>::}
!{(0,0) }*+{\wt{Hecke}_{R_{\ul\nu}}^{\ul{\hat{Z}}}}="a"
!{(-2.5,-2.5) }*+{{Hecke_n^\ul{\hat{Z}}}\times_{R_{\ul{\hat{Z}}}}R_{\ul\nu}}="b"
!{(2.5,-2.5) }*+{\prod_i \hat{Z}_{i,R_{\nu_i}}.}="c"
"a":^{\pi}"b" "a":_{f}"c"
}
\end{equation}  
\noindent
Furthermore, in the above roof, the formal stack $\wt{Hecke}_{R_{\ul\nu}}^{\ul{\hat{Z}}}$ is an $\prod_i L^+\BP_{\nu_i}$-torsor over $Hecke_{R_\ul\nu}^{\ul{\hat{Z}}}:={Hecke_n^\ul{\hat{Z}}}\times_{R_{\ul{\hat{Z}}}}R_{\ul\nu}$ under the projection $\pi$. Moreover for a geometric point $y$ of $Hecke_{R_\ul\nu}^{\ul{\hat{Z}}}$, the $\prod_i L^+\BP_{\nu_i}$-torsor $\pi:\wt{Hecke}_{R_\ul\nu}^{\ul{\hat{Z}}}\to Hecke_{R_\ul\nu}^{\ul{\hat{Z}}}$ admits a section $s$, over an \'etale neighborhood of $y$, such that the composition $f\circ s$ is formally smooth. 

\item Consider the formal algebraic stack $\nabla_n^{H, \ul{\hat{Z}}}\scrH^1$ of $\FG$-shtukas bounded by $\ul{\hat{Z}}$ and endowed with level $H$-structure, for compact open subgroup $H\subseteq \FG(\BA_C^\ul\nu)$. The above roof of morphisms induces the following 

\begin{equation}\label{eqnablaHRoof} 
\xygraph{
!{<0cm,0cm>;<1cm,0cm>:<0cm,1cm>::}
!{(0,0) }*+{\nabla_n^{H, \ul{\hat{Z}}}\wt{\scrH_{R_\ul\nu}^1}}="a"
!{(-1.5,-1.5) }*+{\nabla_n^{H, \ul{\hat{Z}}}\scrH_{R_\ul\nu}^1}="b"
!{(1.5,-1.5) }*+{\prod_i \hat{Z}_{i,R_{\nu_i}},}="c"
"a":^{\pi'}"b" "a":_{f'}"c"
}  
\end{equation}

and that the local section $s$ induces a local section $s'$ such that $s' \circ f'$ is formally \'etale.

\end{enumerate}

\end{theorem}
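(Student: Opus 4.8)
The plan is to realize $\wt{Hecke}_{R_{\ul\nu}}^{\ul{\hat Z}}$ as a \emph{rigidified} Hecke stack --- built by adding trivializations of the local $\BP_{\nu_i}$-bundles at the characteristic places --- to read off the two legs of \eqref{HeckeRoof} directly from that description, and to verify the smoothness assertion by Grothendieck's infinitesimal lifting criterion together with the Beauville--Laszlo gluing lemma \cite[Thm.~2.12.1]{B-L}. Part (b) is then obtained by intersecting the whole roof with the graph of the partial Frobenius, as in Definition-Remark~\ref{Def-RemNblaH}, and upgrading ``smooth'' to ``\'etale'' by means of the Serre--Tate type deformation equivalence recalled just above the statement (the functor $Defo_S(\bar{\ul\CG})\to\prod_iDefo_S(\ul{\bar\CL}_i)$ induced by the global-local functor \eqref{EqGlobLocFunctor} being an equivalence of categories).

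For (a): over $S\in\Nilp_{R_{\ul\nu}}$ let $\wt{Hecke}_{R_{\ul\nu}}^{\ul{\hat Z}}(S)$ classify an $S$-point $\bigl((\CG,\psi),(\CG',\psi'),\ul\charsect,\tauGlob\bigr)$ of $Hecke_n^{\ul{\hat Z}}\times_{R_{\ul{\hat Z}}}R_{\ul\nu}$ together with trivializations $\gamma_i$ of the $\BP_{\nu_i}$-torsor $\CG'|_{\BD(\Gamma_{(\ul\charsect)_i})}$, which by Beauville--Laszlo descent is the same datum as a trivialization of the associated $L^+\BP_{\nu_i}$-torsor. Forgetting $\ul\gamma=(\gamma_i)_i$ gives $\pi$; since the torsor of such trivializations is pulled back, along the morphism $(\CG',\psi',\ul\charsect)$, from $\scrH_D^1(C,\FG)\times_{\BF_q}\Spf\wh A_{\ul\nu}$, and a trivialization of an $L^+\BP_{\nu_i}=\invlim_m\BP_{\nu_i}(\,\cdot\,/z^m)$-torsor is a point of a torsor under a limit of smooth affine groups, $\pi$ is a $\prod_iL^+\BP_{\nu_i}$-torsor; pulling back an \'etale-local section of the corresponding torsor over $\scrH_D^1(C,\FG)\times\Spf\wh A_{\ul\nu}$ produces a section $s$ over an \'etale neighbourhood $U$ of any geometric point $y$, for which $\ul\gamma$ depends only on $(\CG',\psi',\ul\charsect)$. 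The second leg $f$ records the relative position at $\ul\nu$: via $\gamma_i$ the loop torsor $\L_{\nu_i}\CG'$ is trivialized, $\L_{\nu_i}(\tauGlob)$ identifies it with $\L_{\nu_i}\CG$, so the positive-loop reduction coming from $\CG|_{\BD(\Gamma_{(\ul\charsect)_i})}$ defines a point of $\SpaceFl_{\BP_{\nu_i}}$, exactly the construction entering Definition-Remark~\ref{DefBCOnGShtAndLevelStr}; the local boundedness condition built into $Hecke_n^{\ul{\hat Z}}$ forces this point into $\hat Z_{i,R_{\nu_i}}$, giving $f\colon\wt{Hecke}_{R_{\ul\nu}}^{\ul{\hat Z}}\to\prod_i\hat Z_{i,R_{\nu_i}}$.

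To show $f\circ s$ is formally smooth I would test against a square-zero extension $\bar S\hookrightarrow S$ in $\Nilp_{R_{\ul\nu}}$: given $\bar u\in U(\bar S)$ with underlying Hecke datum $\bar m$ and a lift $z\in\prod_i\hat Z_{i,R_{\nu_i}}(S)$ of $(f\circ s)(\bar u)$, first lift $(\bar\CG',\bar\psi')$ to $S$ using that $\scrH_D^1(C,\FG)$ is smooth (Definition-Remark~\ref{Def-RemH1}), lift $\bar{\ul\charsect}$ to $\ul\charsect$ (its graphs stay inside the formal discs at $\ul\nu$), and lift the trivializations prescribed by $s$ (formal smoothness of the positive-loop torsor); because $\ul\gamma$ was arranged to depend only on $(\CG',\psi',\ul\charsect)$ it is now determined, and $z$ --- a point of the $L^+\BP_{\nu_i}$-stable (Definition~\ref{DefBDLocal}) closed ind-subscheme $\hat Z_{i,R_{\nu_i}}$ --- translates via $\gamma_i^{-1}$ into a $\BP_{\nu_i}$-modification of $\CG'|_{\BD(\Gamma_{(\ul\charsect)_i})}$ lifting the given one; gluing these modifications to $\CG'|_{C_S\setminus\Gamma_{\ul\charsect}}$ by Beauville--Laszlo yields $\CG$, $\tauGlob$ and $\psi:=\psi'\circ\tauGlob^{-1}$, hence a lift $m$ with $(f\circ s)(m)=z$. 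Thus $f\circ s$ is formally smooth, of formal relative dimension $\dim\scrH_D^1(C,\FG)+n$, the free directions being the choices of $\CG'$ and of $\ul\charsect$.

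For (b): intersecting \eqref{HeckeRoof} with the graph of $\bigl(\hat\sigma\circ pr_1,pr_2\bigr)$ on $\scrH^1_D(C,\FG)$, and carrying along the compact open $H$-level structure away from $\ul\nu$ (which deforms uniquely, being a torsor under a finite group by Definition-Remark~\ref{Def-RemNblaH}), produces the roof \eqref{eqnablaHRoof}, and $s$ restricts to a section $s'$ of $\pi'$. The lifting argument of (a) applies verbatim \emph{except} that the global $\FG$-bundle underlying a global $\FG$-shtuka is no longer a free parameter: the deformation equivalence $Defo_S(\bar{\ul\CG})\isoto\prod_iDefo_S(\ul{\bar\CL}_i)$ identifies deformations of $\bar{\ul\CG}$ with deformations of the local $\BP_{\nu_i}$-shtukas, and since $\hat\sigma^\ast$ annihilates the square-zero ideal the underlying loop torsor --- hence, by Beauville--Laszlo, the bundle away from $\Gamma_{\ul\charsect}$ --- is rigid and the whole bundle is recovered from the deformed local modifications alone. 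Consequently the fibre of $f'\circ s'$ over a point of $\prod_i\hat Z_{i,R_{\nu_i}}$ has trivial tangent space, so $f'\circ s'$ is both formally smooth (same argument) and formally unramified, i.e. formally \'etale, as claimed. The main obstacle I anticipate is exactly this Frobenius bookkeeping in (b): one must make the Serre--Tate equivalence interact correctly with the rigidification so that lifting a bounded global $\FG$-shtuka is the \emph{same} infinitesimal problem as lifting its image in $\prod_i\hat Z_{i,R_{\nu_i}}$, neither more nor less; the remaining technical points --- effectivity of Beauville--Laszlo gluing in families of formal schemes over $R_{\ul\nu}$, and the choice of $s$ making the chosen trivializations depend only on the bundle and the characteristics --- are routine once that is in place.
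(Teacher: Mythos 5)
The paper itself does not prove this statement: Theorem~\ref{PropLocalModelHecke} is recalled (without proof) from the authors' earlier work \cite{AH_LM} (see also \cite{AH_Local} and \cite{Ar18}), where the local-model roof is established. Your proposal follows essentially the same route as that proof: rigidify the bounded Hecke stack by trivializations of the $L^+\BP_{\nu_i}$-torsors at the characteristic places, read off $\pi$ as the forgetful $\prod_i L^+\BP_{\nu_i}$-torsor and $f$ as the relative-position map into $\prod_i\hat Z_{i,R_{\nu_i}}$, prove formal smoothness of $f\circ s$ by infinitesimal lifting together with Beauville--Laszlo gluing, and upgrade to formal \'etaleness in part (b) via the global-local (Serre--Tate type) deformation equivalence quoted just before the theorem. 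Two points you should make precise but which do not affect the structure of the argument: the trivializations must be taken along the formal completion at the places $\nu_i$ (in $\Nilp_{R_{\ul\nu}}$ this coincides with completion along the graphs $\Gamma_{s_i}$, since the two closed subschemes have the same support), as otherwise the structure group would not literally be $\prod_i L^+\BP_{\nu_i}$; and the rigidity invoked in (b) is, precisely, that the $q$-Frobenius of a square-zero (nilpotent) thickening factors through the closed subscheme, which is what identifies lifting the bounded global $\FG$-shtuka with lifting its image in $\prod_i\hat Z_{i,R_{\nu_i}}$ and hence gives both formal smoothness and unramifiedness of $f'\circ s'$.
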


\begin{remark}\label{RemStratification}
The above roof of morphisms \ref{eqnablaHRoof} in particular induces the following smooth morphism
$$
\nabla_n^{H, \ul{\hat{Z}}}\scrH_{R_\ul\nu}^1\to \prod_i  L^+\BP_{\nu_i} \backslash \hat{Z}_{i,R_{\nu_i}}
$$
of formal algebraic stacks. Thus we obtain a natural stratification on the special fiber $\nabla_n^{H, \ul{\hat{Z}}}\scrH_{R_\ul\nu,s}^1$ which is induced by the orbits of the $\prod_i L^+\BP_{\nu_i}$-action on $\prod_i Z_i$ and their incidence relation, where $Z_i$ denotes the special fiber of $\hat{Z}_i$.

\end{remark}

Recall that the ind-algebraic structure on $Hecke_n(C,\FG)$ induces an ind-algebraic structure on $GR_n:=GR_n(C,\FG):=\dirlim GR_n^{(\rho,\ul\omega)}(C,\FG)$; see definition-remark \ref{Def-RemHecke}(c). This allows to define the derived category of motives $DM(GR_n)$ as the colimit of $DM(GR_n^\ul\omega)$, where the latter can be defined in the sense of \cite{CD}.

\begin{proposition}
The $D$-level structure can be taken enough large such that $\nabla_n^{(\rho,\ul\omega)}\scrH_D^1(C,\FG)_\alpha$ (resp. $\nabla_n^\CZ\scrH_D^1(C,\FG)_\alpha$) becomes representable by a quasi-projective scheme. Consequently the level $H$-structure can be taken enough small such that $\nabla_n^{\ul{\hat{Z}}, H}\scrH^1(C,\FG)_\alpha$ becomes representable by a quasi-projective formal scheme over $\Spf \wh A_\ul\nu$.
\end{proposition}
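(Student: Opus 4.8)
The plan is to deduce representability from two facts already available in the text: the representability of the moduli of $\FG$-bundles with level structure, and the relative quasi-projectivity of the Hecke stacks recalled in Definition-Remark~\ref{Def-RemHecke}. The one genuinely external input I invoke is that, for $\deg D$ large enough, the quasi-compact open substack $\scrH_D^1(C,\FG)_\alpha$ of Definition-Remark~\ref{Def-RemH1} is representable by a quasi-projective scheme over $\BF_q$. For constant split reductive $\FG$ this is classical; for a general smooth affine $\FG$ one fixes a faithful representation $\rho\colon\FG\into\SL(\CV)$ with quasi-affine quotient, reduces to the case of $\SL(\CV)$, and uses that a $\FG$-bundle carrying a full level structure along a divisor of large degree has no nontrivial automorphisms (an automorphism being a global section of the adjoint group scheme trivial along $D_S$, hence trivial once $\deg D$ exceeds a bound which may be chosen uniformly over the quasi-compact $\scrH_\alpha^1$); see \cite{Heinloth}, \cite{AH_Global}, \cite{Var}. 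In particular $\scrH_D^1(C,\FG)_\alpha$ is then a separated scheme, so its diagonal is a closed immersion.

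Granting this, fix such a $D$. By Definition-Remark~\ref{Def-RemHecke}\ref{indstructureonhecke}, $Hecke_{n,D}^{(\rho,\ul\omega)}(C,\FG)_\alpha$ is relatively representable over $\scrH_D^1(C,\FG)_\alpha\times_{\BF_q}C^n$ by a quasi-projective morphism, hence is a quasi-projective $\BF_q$-scheme. By Definition-Remark~\ref{Def-RemNblaH}, $\nabla_n^{(\rho,\ul\omega)}\scrH_D^1(C,\FG)_\alpha$ is $\equi(\sigma_{\scrH_D^1}\circ pr_1,\,pr_2)$, i.e. the preimage of the diagonal of $\scrH_D^1(C,\FG)_\alpha$ under $(\sigma\circ pr_1,\,pr_2)$; as that diagonal is a closed immersion, $\nabla_n^{(\rho,\ul\omega)}\scrH_D^1(C,\FG)_\alpha$ is a closed subscheme of a quasi-projective scheme, hence quasi-projective. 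For the global bound, $\CZ$ is a quasi-compact closed condition, so it lies in $GR_n^{(\rho,\ul\omega)}$ for a suitable tuple $\ul\omega$; therefore $Hecke_{n,D}^\CZ(C,\FG)_\alpha$ is a closed substack of $Hecke_{n,D}^{(\rho,\ul\omega)}(C,\FG)_\alpha$ and $\nabla_n^\CZ\scrH_D^1(C,\FG)_\alpha$ a closed subscheme of $\nabla_n^{(\rho,\ul\omega)}\scrH_D^1(C,\FG)_\alpha$, again a quasi-projective scheme.

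For the formal statement I combine this with Definition-Remark~\ref{DefBCOnGShtAndLevelStr}. A bound $\ul{\hat Z}$ is a quasi-compact (projective when $\FG$ is parahoric) condition, so it is contained in a suitable $GR_n^{(\rho,\ul\omega)}$, and the $\ul{\hat Z}$-boundedness of $\tau^{-1}$ is a closed condition; hence $\nabla_n^{\ul{\hat Z}}\scrH_D^1(C,\FG)_\alpha$ is a closed formal subscheme of the formal completion along the fibre over $\ul\nu$ of the quasi-projective scheme $\nabla_n^{(\rho,\ul\omega)}\scrH_D^1(C,\FG)_\alpha$, base changed to $\Spf R_{\ul{\hat Z}}$, and is therefore a quasi-projective formal scheme over $\Spf R_{\ul{\hat Z}}$, hence over $\Spf\wh A_\ul\nu$. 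Finally, keeping $D$ fixed as above, for any compact open $H\subseteq\FG(\BA_C^\ul\nu)$ small enough to be contained in the level-$D$ congruence subgroup, the forgetful morphism $\nabla_n^{\ul{\hat Z},H}\scrH^1(C,\FG)_\alpha\to\nabla_n^{\ul{\hat Z}}\scrH_D^1(C,\FG)_\alpha$ is finite (in fact finite \'etale, a torsor under a finite group, by Definition-Remark~\ref{Def-RemNblaH} and Definition-Remark~\ref{DefBCOnGShtAndLevelStr}), so $\nabla_n^{\ul{\hat Z},H}\scrH^1(C,\FG)_\alpha$ is finite over a quasi-projective formal scheme and hence itself a quasi-projective formal scheme over $\Spf\wh A_\ul\nu$.

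The step I expect to be the real obstacle is the external input of the first paragraph: establishing that $\scrH_D^1(C,\FG)_\alpha$ is an honest quasi-projective scheme, not merely a separated algebraic space of finite type, for $\deg D$ large and an arbitrary smooth affine group scheme $\FG$ over $C$. This is where rigidification of $\FG$-bundles by level structure must be combined with a GIT/ampleness argument and where the reduction along $\rho\colon\FG\into\SL(\CV)$ is genuinely used; the remaining steps are bookkeeping with relative representability, together with the routine points (treated as in \cite{AH_Global}) that ``$\deg D$ large'' can be arranged uniformly over the quasi-compact $\scrH_\alpha^1$ and that $\sigma^\ast$ preserves $\scrH_\alpha^1$, so that the source bundle of a point of $\nabla_n^{(\rho,\ul\omega)}\scrH_D^1(C,\FG)_\alpha$ again lies over $\scrH_{D,\alpha}^1$.
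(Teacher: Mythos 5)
Your proposal is correct and takes essentially the same route as the paper: the step you flag as the external input --- quasi-projectivity of $\scrH_D^1(C,\FG)_\alpha$ for $\deg D$ large --- is precisely what the paper's proof spends its effort on, and it is carried out along the lines you sketch, via a faithful representation $\rho\colon\FG\into\SL(\CV_0)$ with quasi-affine quotient, the determinant Cartesian square reducing $\SL(\CV_0)$-bundles to the classical quasi-projectivity of open substacks of $Vect_C^r$ with $D$-level structure, and the resulting chain of schematic quasi-projective morphisms, after which the Hecke stack, the graph-of-Frobenius pullback, and the closed immersion of the bounded locus into a single $(\rho,\ul\omega)$-piece are handled just as you do. The only cosmetic difference is at the last step: the paper takes $H=\ker\bigl(\FG(\BA^{\ul\nu})\to\FG(\CO_D)\bigr)$ and invokes the tannakian identification $\nabla_n^{\ul{\hat{Z}},H}\scrH^1(C,\FG)\cong\nabla_n^{\ul{\hat{Z}}}\scrH_D^1(C,\FG)$, whereas you allow any smaller $H$ via finiteness of the level-forgetting morphism, which amounts to the same thing.
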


\begin{proof}

Fix a faithful representation $\rho\colon\FG\into\SL(\CV_0)$  for some vector bundle $\CV_0$ of rank $r$, with quasi-affine quotient $\SL(\CV_0)/\FG$; see \cite[Proposition 2.2.b)]{AH_Global}. 
First observe that $\scrH^1(C,\GL(\CV_0))$ can be identified with the stack $Vect_C^r$ of vector bundles of rank $r=\rk \CV_0$ (and hence with $\scrH^1(C,\GL_r)$). The stack $\scrH^1(C,\SL(\CV_0))$ is determined by requiring that the determinant of the $\FG$-bundles $\CG\in\scrH^1(C,\GL(\CV_0))$ are trivial. In other words there is the following Cartesian diagram of algebraic stacks

$$
\CD
\scrH^1(C,\GL(\CV_0))@>{\det}>>\scrH^1(C,\BG_m)\\
@AAA @A{tr}AA\\
\scrH^1(C,\SL(\CV_0))@>>>\Spec \BF_q,
\endCD
$$

where $\det$ is induced by $\det: \GL_n\to\BG_m$ and the right vertical arrow corresponds to the trivial line bundle. It is well-known that the stack of vector bundles $Vect_C^r$ admits a covering by open substacks of finite type $\BF_q$ which further become representable by a quasi- projective scheme after endowing with $D$-level structure, for large enough $D$. As the morphism $tr:\Spec \BF_q\to \scrH^1(C,\BG_m)$ is schematic and quasi-projective, we see that the same holds for $\scrH^1(C,\SL(\CV_0))\to \scrH^1(C,\GL(\CV_0))$. Furthermore as $\SL(\CV_0)/\FG$ is quasi-affine, therefore the morphism
$$
\scrH^1(C,\FG)\to \scrH^1(C,\SL(\CV_0))
$$
is quasi-projective. We argue that the divisor $D$ can be chosen enough large such that $\scrH_D^1(C,\FG)_\alpha$ becomes representable by a quasi-projective scheme. By Definition-Remark \ref{Def-RemHecke}.b) This implies that for such $D$-level structure $Hecke_D^{(\rho,\ul\omega)}(C,\FG)_\alpha$ is quasi-projective, which further implies that $\nabla_n^{(\rho,\ul\omega)}\scrH_D^1(C,\FG)_\alpha$ is quasi-projective, as it is defined to be the pull-back of the graph of Frobenius $\Gamma_\sigma \subseteq \scrH^1(C,\FG)_\alpha\times_{\BF_q}\scrH^1(C,\FG)_\alpha$ under
$$
Hecke_D^{(\rho,\ul\omega)}(C,\FG)_\alpha\to \scrH^1(C,\FG)_\alpha\times \scrH^1(C,\FG)_\alpha.
$$ 
To see the statement for the formal stack $\nabla_n^{\ul{\hat{Z}}, H}\scrH^1(C,\FG)$, take $$H:=\ker(\FG(\BA^\ul\nu\to \FG(\CO_D))),$$ for relevant $D$ as above. Using tannakian formalism one can observe that 
$$
\nabla_n^{\ul{\hat{Z}}, H}\scrH^1(C,\FG)\cong \nabla_n^{\ul{\hat{Z}}}\scrH_D^1(C,\FG),
$$ 
see \cite[Theorem 5.2.5]{Ara}. Note that $\nabla_n\scrH_D^1(C,\FG)=\dirlim[\ul\omega] \nabla_n^{(\rho,\ul\omega)}\scrH_D^1(C,\FG)$, and thus the morphism 
$$
\nabla_n^{\ul{\hat{Z}}}\scrH_D^1(C,\FG)\to \nabla_n\scrH_D^1(C,\FG)
$$ 
factors through a closed immersion to $\nabla_n^{(\rho,\ul\omega)}\scrH_D^1(C,\FG)$.  

\end{proof}

According to the above proposition one can define the category $DM(\nabla_n^{(\rho,\ul\omega)}\scrH_D^1(C,\FG))$, for large enough $D$, and consequently $DM(\nabla_n^{(\rho,\ul\omega)}\scrH^1(C,\FG))$ as $DM(\CC^\bullet)$, where $\CC^\bullet$ denotes the Cech simplicial complex associated to the \'etale cover
$$\nabla_n^{(\rho,\ul\omega)}\scrH_D^1(C,\FG)\to\nabla_n^{(\rho,\ul\omega)}\scrH^1(C,\FG).
$$

Recall from Definition-Remark \ref{Def-RemNblaH} (b) that the moduli stack of global $\FG$ shtukas $\CX:=\nabla_n\scrH^1(C,\FG)$ admits a Deligne-Mumford ind-algebraic structure, i.e. it can be viewed as the direct limit 
$$
\CX:=\dirlim \CX^{(\rho,\ul\omega)}
$$
of closed immersions of Deligne-Mumford stacks $\CX^{(\rho,\ul\omega)}:=\nabla_n^{(\rho,\ul\omega)} \scrH^1(C,\FG)$. 
Thus one can consider the category $DM(\nabla_n\scrH^1(C,\FG))$ of motives over $\nabla_n\scrH^1(C,\FG)$ as the colimit of the motivic categories $DM(\CX^{(\rho,\ul\omega)})$ under $i_{\ul\omega\ul\omega'\ast}$, for closed immersions $i_{\ul\omega\ul\omega'}:\CX^{(\rho,\ul\omega)}\to \CX^{(\rho,\ul\omega')}$. 

For the sake of simplicity let us now restrict our attention to the special fiber of $\nabla_n^{\wh Z,H}\scrH_s^1:=\nabla_n^{\wh Z,H}\scrH^1(C,\FG)_s$ above $\ul\nu$. Consider the motivic category $DM(\nabla_n^{\wh Z,H}\scrH_s^1)$. As we mentioned in remark \ref{RemStratification}, according to the local model diagram, we obtain a stratification on $\nabla_n^{\wh Z,H}\scrH_s^1$. Let $\mathds{U}:=(\nabla_n^{\wh Z,H}\scrH^1)^\circ$ denote the open stratum. Following \cite[Section 2]{Wil} we define the motivic intersection complex $ICM(\nabla_n^{\wh Z,H}\scrH_s^1)$ as the intermediate extension $j_{\ast!}\mathds{1_U}$, where $j$ denotes the open immersion $\mathds{U}\hookrightarrow\nabla_n^{\wh Z,H}\scrH_s^1$. We refer to \cite[Section 4]{Wil} for the existence problem.

\begin{proposition}\label{Prop_MotIntersectionCompNablaScrH}
The motivic intersection complex $ICM(\nabla_n^{H, \wh Z_\ul\nu}\scrH_s^1)$ and the restriction of $ICM(Hecke_{n,s}^{\CZ_\ul\nu})$ to $\nabla_n^{H, \wh Z_\ul\nu}\scrH_s^1$ agree up to some shift and Tate twist.
\end{proposition}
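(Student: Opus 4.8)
The plan is to deduce the comparison from the two local-model roofs of Theorem~\ref{PropLocalModelHecke}, which present both $Hecke_{n,s}^{\CZ_\ul\nu}$ and $\nabla_n^{H,\wh Z_\ul\nu}\scrH_s^1$ as stratified smooth over one and the same base built from the special fibres $Z_i$ of the local bounds, and then to invoke that the motivic intermediate extension of \cite{Wil} is compatible with smooth, stratification-preserving pullback up to a shift and a Tate twist. By construction $\nabla_n^{H,\wh Z_\ul\nu}\scrH_s^1$ lies inside $Hecke_{n,s}^{\CZ_\ul\nu}$ as the locus on which the Frobenius-graph condition of Definition-Remark~\ref{Def-RemNblaH}(a) holds, with bounds and level imposed as in Definition-Remark~\ref{DefBCOnGShtAndLevelStr}; here one uses that the global bound $\CZ_\ul\nu$ at $\ul\nu$ corresponds to the tuple $\ul{\hat Z}=(\hat Z_i)_i$ of local bounds in the sense of \cite{AH_LM}. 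Write $\iota\colon \nabla_n^{H,\wh Z_\ul\nu}\scrH_s^1\to Hecke_{n,s}^{\CZ_\ul\nu}$, $\ul\CG\mapsto(\sigma^\ast\CG,\CG,\ul s,\tau)$, for the resulting morphism. By the representability reduction established above we may shrink $D$ (equivalently $H$) so that, over each $\scrH_\alpha^1$, both stacks become quasi-projective schemes, which is the setting in which $ICM(-)$ of \cite{Wil} is defined.

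Next I would combine Theorem~\ref{PropLocalModelHecke} with Remark~\ref{RemStratification}. The roof \eqref{HeckeRoof} realizes $\wt{Hecke}_{R_\ul\nu}^{\ul{\hat Z}}$ as a $\prod_i L^+\BP_{\nu_i}$-torsor over $Hecke_{R_\ul\nu}^{\ul{\hat Z}}$ carrying a $\prod_i L^+\BP_{\nu_i}$-equivariant morphism $f$ to $\prod_i\hat Z_{i,R_{\nu_i}}$ whose composite with a local section of $\pi$ is formally smooth; descending along the pro-smooth torsor $\pi$ and passing to special fibres gives a smooth morphism $\phi_H\colon Hecke_{n,s}^{\CZ_\ul\nu}\to\CM:=\prod_i\bigl[L^+\BP_{\nu_i}\backslash Z_i\bigr]$, and \eqref{eqnablaHRoof} gives in the same way a smooth $\phi_\nabla\colon \nabla_n^{H,\wh Z_\ul\nu}\scrH_s^1\to\CM$. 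By Remark~\ref{RemStratification} the map $\phi_\nabla$ pulls the orbit stratification of $\CM$ back to the given stratification of $\nabla_n^{H,\wh Z_\ul\nu}\scrH_s^1$, and the analogous assertion holds for $\phi_H$ via \ref{Def-RemHecke}\ref{heckeislocallGr}. Crucially, both $\phi_H$ and $\phi_\nabla$ amount to reading off relative positions at the places $\nu_i$ through the global--local functor \eqref{EqGlobLocFunctor}, so that $\phi_H\circ\iota=\phi_\nabla$; note $\iota$ itself is far from smooth, so this factorization, not $\iota$, is what carries the argument.

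The conclusion is then formal, granting that for a stratified smooth morphism $g\colon X\to Y$ of relative dimension $d$ one has $ICM(X)\cong g^\ast ICM(Y)(d)[2d]$ (the motivic avatar of $g^\ast IC_Y[d]\cong IC_X$; cf.\ \cite{Wil}). Applying this to $\phi_\nabla$ and $\phi_H$ and using $\phi_H\circ\iota=\phi_\nabla$ yields
$$
\begin{aligned}
\iota^\ast ICM(Hecke_{n,s}^{\CZ_\ul\nu})
&\cong\iota^\ast\phi_H^\ast ICM(\CM)(d_H)[2d_H]
=\phi_\nabla^\ast ICM(\CM)(d_H)[2d_H]\\
&\cong ICM\bigl(\nabla_n^{H,\wh Z_\ul\nu}\scrH_s^1\bigr)(d_H-d_\nabla)[2(d_H-d_\nabla)],
\end{aligned}
$$
which is exactly the asserted agreement, with shift and Tate twist governed by the difference $d_\nabla-d_H$ of the relative dimensions of $\phi_\nabla$ and $\phi_H$ (essentially the contribution of the $\scrH^1(C,\FG)$-type factors cut out by the Frobenius graph).

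The main obstacle is the motivic input in the last step. The intermediate extension $ICM$ is at present only conditionally available (its existence is flagged as open in \cite[Section~4]{Wil}), and one needs its compatibility with smooth base change and with restriction to strata in the generality of Deligne--Mumford stacks rather than smooth projective varieties; in addition $ICM$ must be made sense of over the quotients by the pro-algebraic groups $L^+\BP_{\nu_i}$. Both points are to be handled by running the whole argument on the finite-type torsor covers $\wt{Hecke}_{R_\ul\nu}^{\ul{\hat Z}}$ and $\nabla_n^{H,\ul{\hat Z}}\wt{\scrH_{R_\ul\nu}^1}$ of Theorem~\ref{PropLocalModelHecke}, where ordinary smooth base change for intermediate extensions applies, and then descending along the torsors $\pi$, $\pi'$ --- the descent along torsors under the pro-smooth loop groups being the genuinely delicate step.
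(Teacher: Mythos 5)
Your argument is correct in substance, but it is organized differently from what the paper actually does: the paper's proof is a one-line reduction, citing the $\ell$-adic statement \cite[Proposition 4.5.2]{AH_LM} and asserting that, since that proof only uses the six-functor formalism, it transfers verbatim to the motivic setting. You instead reconstruct the geometric mechanism behind that citation: both $Hecke_{n,s}^{\CZ_\ul\nu}$ and $\nabla_n^{H,\wh Z_\ul\nu}\scrH_s^1$ are, via the roofs of Theorem~\ref{PropLocalModelHecke} and Remark~\ref{RemStratification}, stratified smooth over the common local model $\prod_i\bigl[L^+\BP_{\nu_i}\backslash Z_i\bigr]$, the two structure maps are compatible with the inclusion $\iota$ through the global-local functor, and smooth invariance of the intermediate extension then gives agreement up to a shift and Tate twist. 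This is almost certainly the argument underlying the cited proposition, so your route buys an explicit proof where the paper offers a citation, and it has the additional merit of naming the genuinely conditional inputs that the paper's phrase ``similar arguments can be used'' suppresses: the existence of $j_{!\ast}$ in the motivic setting (flagged as open in \cite[Section~4]{Wil}), the need for smooth pullback and stratum-restriction compatibilities of $ICM$ beyond the projective-variety case, and the descent of $ICM$ along torsors under the pro-smooth groups $L^+\BP_{\nu_i}$. Two small cautions: the exact form $g^\ast ICM(Y)(d)[2d]$ versus $g^\ast ICM(Y)[d]$ is a normalization choice, harmless here since the claim is only ``up to some shift and Tate twist''; and the identity $\phi_H\circ\iota=\phi_\nabla$ should be justified exactly as you indicate, by the fact that both sides record the relative position at the $\nu_i$ through $\ul{\wh\Gamma}$, which is the content of the compatibility between diagrams \eqref{HeckeRoof} and \eqref{eqnablaHRoof}.
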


\begin{proof}
The proof for intersection complexes is given in \cite[Proposition 4.5.2]{AH_LM}. Note that this proof only relies on the Grothendieck's six-functor formalism and its basic properties and hence similar arguments can be used to establish the proof in the motivic context. 
\end{proof}

\begin{remark}
Recall from \ref{Def-RemHecke} that the stack $Hecke_n(C,\FG)$ and $GR_n\times C^n$ are locally isomprphic for the \'etale topology on $C^n\times \scrH^1(C,\FG)$ and further this is compatible with boundedness condition and by definition preserves the stratification. The above also suggests to consider the following assignment 
$$
\CZ \mapsto ICM_\CZ:=i^! ICM(Hecke_n^\CZ(C,\FG))\in DM(\nabla_n\scrH^1(C,\FG)).
$$

\noindent
This assignment might be used to transform Satake classes from $DM(GR_n)$ to $DM(\nabla_n\scrH^1(C,\FG))$ in global situation and for (non-constant) ramified cases.
\end{remark}

\begin{question} (Motivic invariants of $\nabla_n^{\ul{\hat{Z}}}\scrH_s^1$) Recall from the discussions in subsection \ref{SubSectSSFrobLM} that the motivic invariant $c_n(X)$ measures the graded pieces of the chunks of slice filtration.  
Regarding our previous discussions one may ask for possible descriptions of the motivic invariants $c_n(M(\nabla_n^{H, \wh Z_\ul\nu}\scrH_s^1))$ associated with the motive of the special fiber $\nabla_n^{H, \ul{\hat{Z}}}\scrH_s^1$. Similar question maybe posed in the global situation and also for $c_n(\pi_! ICM(\nabla_n^{H, \wh Z_\ul\nu}\scrH_s^1))$, where $\pi:\nabla_n^{H, \wh Z_\ul\nu}\scrH_s^1\to \Spec k$ is the structure map. When $G$ is split reductive and $C=\BP^1$, it seems plausible to expect that these fundamental invariants lie in $\textbf{D}_f^b(\textbf{Ab})$. 
\end{question}

\subsection{The Motive Of $\CZ$ and $Hecke_n^\CZ$}\label{SubSecLocToGlob}

In this subsection we assume that $\FG$ is parahoric.

\begin{proposition}\label{PropLocToGlob}
There is an assignment 

$$
\gamma: \{\text{global boundedness conditions}~ \CZ\} \to \{\text{n-tuple of local boundedness conditions}~ \ul{\hat{Z}}\}.
$$

Furthermore a tuple $\ul{\hat{Z}}$ can give rise to a global boundedness condition $\CZ$ with $\gamma(\CZ)=\ul{\hat{Z}}$. Moreover if for every $i$ the motive of the generic fiber of $\ul{\hat{Z}}$ lies in the category of the pure Tate motives then all fiber of the corresponding boundedness condition $\CZ$ (as a family over $C^n$) are geometrically pure Tate.

\end{proposition}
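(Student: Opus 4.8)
The plan is to define $\gamma$ by passing to the formal completion of the global affine Grassmannian along the fibre over the characteristic tuple $\ul\nu$, to invert this construction using Grothendieck's existence theorem, and then to deduce pure-Tateness of the fibres of $\CZ$ from the fibrewise (factorisation) structure of $GR_n\to C^n$. Given a global bound $\CZ=[Z_K]$ with representative $Z_K\subseteq GR_n\times_{C^n}\wt C_K^n$, I would complete $Z_K$ along the fibre over a point of $\wt C_K^n$ lying above $\ul\nu=(\nu_i)_i$. By the Beilinson-Drinfeld factorisation structure of $GR_n$ and the local isomorphism of Definition-Remark \ref{Def-RemHecke}\ref{heckeislocallGr}, the formal completion of $GR_n\times_{C^n}\wt C_K^n$ along that fibre is, after a suitable finite extension, a product $\prod_i\wh{\SpaceFl}_{\BP_{\nu_i}}$, and a closed $\FL_n^+\FG$-stable formal subscheme of it decomposes, again by factorisation, as $\prod_i\wh Z_i$ with each $\wh Z_i\subseteq\wh{\SpaceFl}_{\BP_{\nu_i}}$ closed and $L^+\BP_{\nu_i}$-stable; one then sets $\gamma(\CZ):=(\wh Z_i)_i$. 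Independence of the representative and compatibility with the equivalence relations of Definitions \ref{DefBDLocal}\ref{DefBDLocal_A} and \ref{Def-RemHecke}\ref{DefAltGlobalBC_A} follow by descent, and the special fibres $Z_i$ are quasi-compact because the fibre of $Z_K$ over $\ul\nu$ is; the underlying geometry is that of \cite[Sections~3.2 and 4.4]{AH_LM}.

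For the reverse direction, starting from $\ul{\hat Z}=(\hat Z_i)_i$ with reflex rings $R_{\hat Z_i}$, I would form $\prod_i\hat Z_{i,R_{\nu_i}}$, which under the identification above is a closed formal subscheme of the formal completion of $GR_n\times_{C^n}\wt C^n$ along the fibre over $\ul\nu$, for a suitable reflex curve $\wt C$. Since $\FG$ is parahoric, the special fibres of the $\hat Z_i$, hence $\prod_i Z_i$, are projective (Remark \ref{RemSpecialFiber}), so this formal subscheme is algebraisable by Grothendieck's existence theorem \cite[III, Thm.~5.4.5]{EGA} to a closed subscheme over a neighbourhood of $\ul\nu$ in $\wt C^n$; taking its scheme-theoretic closure in $GR_n\times_{C^n}\wt C^n$, and observing that the closure of an $\FL_n^+\FG$-stable subscheme is again $\FL_n^+\FG$-stable, one obtains a representative of a global bound $\CZ$ with $\gamma(\CZ)=\ul{\hat Z}$ by construction.

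Now assume the generic fibre of each $\hat Z_i$ has pure Tate motive, and let $Z_{\wt C}$ denote the representative of $\CZ$ produced above. I would stratify $\wt C^n$ by the incidence pattern of the coordinates and their relation to $\ul\nu$. Over the open stratum, where the coordinates are pairwise distinct and avoid $\ul\nu$, factorisation identifies $GR_n$ with an $n$-fold fibre product of relative affine Grassmannians over $C$ and identifies the fibre of $Z_{\wt C}$ at such a point $x$ with a product of base changes to $\kappa(x)$ of the generic fibres of the $\hat Z_i$; these are geometrically pure Tate by hypothesis, and pure-Tateness is stable under products (Künneth isomorphism, Definition-Remark \ref{DefRemCn}(d)) and under extension of the base field, so the fibre of $Z_{\wt C}$ at $x$ is geometrically pure Tate. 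Over a deeper stratum the fibre of $Z_{\wt C}$ is, by flatness of $GR_n\to C^n$ and of the bound, an iterated degeneration (nearby-cycles fibre) of such a generic product; because $\FG$ is parahoric this degeneration is a reduced union of affine-flag Schubert-type varieties assembled by convolution from cellular pieces, and the structural results on global Schubert subschemes of the Beilinson-Drinfeld affine Grassmannian for parahoric group schemes (Pappas-Zhu \cite{PZ}, Richarz \cite{Richarz13}, and \cite[Sections~3.2 and 4.4]{AH_LM}), combined with the motivic Leray-Hirsch theorem \ref{Leray Hirsch for Voevodsky motives} and the decomposition result Proposition \ref{PropMotivicDecomposition} applied along a Bott-Samelson-type resolution — exactly as in the proof of Theorem \ref{ThmFrobistSemiSimpleOnZ} — show that such fibres are still pure Tate.

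The main obstacle is this last step: passing from the geometrically pure-Tate generic fibre to the a priori singular, possibly reducible, special fibre of the bound, since pure-Tateness is not preserved under an arbitrary flat degeneration. What makes it work is precisely the parahoric hypothesis on $\FG$, which forces the degeneration to be governed by the combinatorics of admissible sets in affine flag varieties and to admit resolutions by towers of cellular fibrations — the situation in which the motivic decomposition theorem and the motivic Leray-Hirsch theorem apply — and this is why the proposition is stated under that hypothesis. A secondary point requiring care is to arrange the stratification of $\wt C^n$ and the factorisation isomorphisms compatibly with the $\FL_n^+\FG$-action and with the fixed representative $Z_{\wt C}$, so that the fibrewise conclusions pertain to one and the same representative.
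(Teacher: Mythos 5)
Your treatment of the first two assertions is close to the paper's: $\gamma$ is indeed defined by formal completion at the characteristic places (the paper simply quotes \cite[Prop.~4.3.3]{AH_LM} for this). For the reverse direction, however, the paper runs the construction in the opposite direction from yours: each projective $\hat Z_i$ is algebraized over $\Spec R_{\hat Z_i}$, its generic fiber is viewed as a closed subscheme $Z_{i,\eta}$ of the generic fiber $Gr_{G_{\wt Q}}$ of $GR_1(\wt C,\FG)$, and $\CZ$ is defined as the Zariski closure of the product $Z_{1,\eta}\times\cdots\times Z_{n,\eta}$ inside $GR_n\times_{C^n}\wt C^n$. Your variant (algebraize the formal product $\prod_i\hat Z_{i,R_{\nu_i}}$ at $\ul\nu$ and then close up) is not obviously wrong, but note that Grothendieck existence gives a scheme over the complete local ring at $\ul\nu$, not over a Zariski neighbourhood, and more importantly it leaves unproved exactly the fact your third step relies on, namely that the fibers of your representative over points with pairwise distinct coordinates are products of the generic fibers of the $\hat Z_i$; with the paper's construction this holds by definition, with yours it would need a separate spreading-out argument.

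The genuine gap is in your last step. The $\hat Z_i$ are arbitrary local bounds whose generic fibers are merely assumed pure Tate; your claim that over deeper strata the fiber of $\CZ$ is a reduced union of Schubert-type varieties assembled by convolution, admitting Bott--Samelson-type semi-small resolutions, is unjustified at this level of generality --- identifying the degenerate fiber of such a closure with an (admissible) union of Schubert varieties is a coherence-type theorem in the spirit of \cite{PZ}, available for bounds generated by cocharacters but not for arbitrary bounds, and it is precisely the structural analysis the paper sets out to avoid. Moreover, even where that structure is available, the route through Theorem \ref{ThmFrobistSemiSimpleOnZ} (motivic Leray--Hirsch, Theorem \ref{Leray Hirsch for Voevodsky motives}, plus the semi-small decomposition of Proposition \ref{PropMotivicDecomposition}) only yields mixed Tateness, not the asserted \emph{pure} Tateness of a singular special fiber. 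The paper's proof of the ``moreover'' part is entirely different: pure Tateness of a fiber $F$ is recast as the full decomposition of the diagonal $\Delta_F$ in $Ch(F\times F)$, and the specialization theorem of Colliot-Th\'el\`ene--Pirutka \cite[Th\'eor\`eme~2.3]{CoPi} (Voisin's degeneration method) propagates this property from general to special fibers; applying it coordinate by coordinate (first for $n=1$ over $\wt C$, then for $n=2$ by restricting $\CZ$ to $x_1\times\wt C$, and so on) gives geometric pure Tateness of \emph{all} fibers with no resolution or Schubert-structure input --- contrary to your remark, in this diagonal-decomposition formulation the property does specialize. To repair your argument you would have to replace the structural claims about deep strata by such a specialization statement.
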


\begin{proof}

Recall from \cite[Prop 4.3.3]{AH_LM} that to a global boundedness condition $\CZ$ one can assign a tuple of local boundes $\ul{\hat{Z}}$. This is done by taking formal completion at the characteristic places $\ul\nu$. Vise versa, starting from a tuple $\ul{\hat{Z}}:=([\hat{Z}_i])_i$ of local bounds we can perform a global boundedness condition $\CZ$ in the following way. Let $\hat{Z}_i$ be a representative of $[\hat{Z}_i]$ over the ring $R_i:=R_{\hat{Z}_i}$. Consider the corresponding finite field extension $\wh{Q}_{\hat{Z}_i}/\wh{Q}_{\nu_i}$. It comes from a global field extension $\wt{Q}_i/Q$.  Adjoining these global fields for all $i$, we obtain an extension $\wt{Q}/Q$. Let $\wt{C}$ be the curve corresponding to the field extension $\wt{Q}/Q$. Note that as $\hat{Z}_i$ is projective, it admits a model over $\Spec R_i$, which by abuse of notation we still denote by $\hat{Z}_i$. The generic fiber of $GR_1(\wt{C},\FG)$ is the usual affine Grassmannian $Gr_{G_\wt Q}$. Note that $GR_1(\wt{C},\FG)$ and $GR_1(C,\FG)\times_{C}\wt C$ are isomorphic on the  locus where $\wt C\to C$ is \'etale. Note further that we have $\CF\ell_{\FG_\nu,R_i}=GR_1(\wt C, \FG)\times_{\wt C} R_i$. The generic fiber of $\hat{Z}_i$ defines a closed subscheme $Z_{i,\eta}$ of $Gr_{G_\wt Q}$.  Now consider the fiber product 
$$
Z:=Z_{1,\eta}\times \cdots \times Z_{n,\eta} \subseteq Gr_{G_\wt Q}^n=(GR_n(C, \FG)\times_{C^n}\wt C^n)_{\eta_{\wt C}^n}
$$
and let $\CZ$ be the Zariski closure of $Z$ in $GR_n(C, \FG)\times_{C^n} \wt{C}^n$. This defines a global boundedness condition $\CZ$ corresponding to the local boundedness condition $\ul{\hat{Z}}$. \\

The second part follows from the degeneration method. If $n=1$ then the fibers $F$ of $\CZ$ over an open subscheme $U\subseteq \wt C$ are pure Tate. Recall that this equivalently means that the diagonal $\Delta_F$ of $F$ fully decomposes in $Ch(F\times F)$. By \cite[Th\'eor\`eme 2.3]{CoPi} we see that the fibers over $\wt C\setminus U$ are also pure Tate. For $n=2$, a general fiber over $\ul x:=(x_1,x_2)$ outside the diagonal $\diag\subseteq \wt C^2$ is pure Tate by construction. For $\ul x=(x_1\times x_1)$ consider the restriction $\CZ|{x_1\times \wt C}$. Then using the fact that the general fiber is pure Tate, and that it specializes as we explained above, we argue that $M(F)$ is pure Tate in general. One can proceed similarly to prove the assertion for $n>2$.

\end{proof}

 Let $\ul\mu:=\{\mu_i\}_{i=1,\dots, n}$ be a set of geometric conjugacy classes of cocharacters in $G$ which are defined over a finite
separable extension $E/Q$. We say that a global boundedness condition $\CZ$ is \emph{generically defined by} $\{\mu_i\}$  if it arises in the following way. Namely,  each $\mu_i$ defines a closed subscheme of $GR_1\times_C \wt{C}_E$ which we denote by $GR_{\preceq\mu_i}$. Note that $GR_{\preceq\mu_i}$ is proper flat with geometrically connected equi-dimensional fibers over $\wt{C}_E$. Now consider the bound $\CZ:=\CZ(\ul\mu)$ which is given by the class of the Zariski closure in $GR_n\times_{C^n}\wt{C}_E^n$ of the restriction of the fiber product $GR_{\preceq\mu_1}\times \dots \times GR_{\preceq\mu_n}$ of global affine Schubert varieties $GR_{\preceq\mu_i}$ to the complement of the big diagonal in $\wt{C}_E^n$.

When $\FG$ is constant $G\times_{\BF_q} C$ for split reductive group $G$ over $\BF_q$, then we have $\wt C=C$ and we consider the obvious action of the symmetric group $S_n$ on $C^n$. This induces a stratification $(C^n)_\ul\alpha$. Here $\ul\alpha$ denote a subset of $\ul n=\{1,\cdots,n\}$. One can see the following

\begin{corollary}\label{CorMotiveOfGr}

Keep the above notation. We have the following statements

\begin{enumerate}

\item
Assume that $\FG$ is parahoric and $S(\mu_i)$ are smooth and pure Tate. The motive $M(\CZ)$ of $\CZ:=\CZ(\ul\mu)$ over $\wt C^n$ lies in the thick subcategory of $DM_{gm}(\ol k)$ generated by pure Tate motives and $M(\wt C)$. In particular when $\wt C=C=\BP^1$ then the motive $M(\CZ)$ in $DM_{gm}(k)$ is geometrically mixed Tate. \\

\noindent
Furthermore when $\FG$ is constant we have

\item

The class of the motive $[M(\CZ)]$ in the Grothendieck ring $K_0[DM_{-}^{eff}(k)]$ can be written as the following sum

$$
[M^c(\CZ)]= \sum_{\ul\alpha} [M^c((C^n)_\ul\alpha)]\cdot[M(S(\mu_\ul\alpha)]\cdot \prod_{i\notin\ul\alpha} [M(S(\mu_i))].
$$

Here $\ul\alpha$ runs over subsets of $\{1,\dots,n\}$ and
$\mu_\ul\alpha:=\sum_{\alpha\in\ul\alpha}\mu_\alpha$\forget{, $S^\ul\alpha:=\prod_{\alpha\notin\ul\alpha} S(\mu_\alpha)$}. In particular when $C=\BP^1$ it lies in the subring generated by $MT\textbf{DM}_{gm}^{eff}(k)$.

\item
Assuming \cite[conjecture 3.4]{Beh07}, the class of the motive of $Hecke_n^{\CZ}(C,\FG)$ can be expressed in the following way

$$
[M^c(Hecke_n^{\CZ}(C,\FG)]=~~~~~~~~~~~~~~~~~~~~~~~~~~~~~~~~~~~~~~~~~~~~~~~~~~~~~~~~~~~~~~~~~~~~~~~~~~~~~~~~~~
$$
$$ 
~~~~~~~|\pi_1(G)|\BL^{(g-1)\dim G} \prod_{i=1}^r
Z(C,\BL^{-d_i})\cdot\sum_{\ul\alpha} [M^c((C^n)_\ul\alpha)]\cdot[M(S(\mu_\ul\alpha)]\cdot \prod_{i\notin\ul\alpha} [M(S(\mu_i))].
$$

\forget{
$$
[M^c(Hecke_n^{\CZ}(C,\FG)]= |\pi_1(G)|\BL^{(g-1)\dim G} \prod_{i=1}^r
Z(C,\BL^{-d_i})\cdot\sum_{\ul\alpha} [M((C^n)_\ul\alpha)]\cdot[M(S_\ul\alpha)]\cdot [M(S^\ul\alpha)].
$$
}

Here $\BL$ denotes the class corresponding to $\BA^1$, $g$ denotes the genus  of $C$, $Z(C,t)$ denotes the motivic zeta function of $C$, and $d_i$'s are given by the class $[G] = \BL^{\dim G} \prod_{i=1}^r (1 - \BL^{-d_i})$ of $G$ in the Grothendieck ring.

\end{enumerate}
\end{corollary}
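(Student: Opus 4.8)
The plan is to reduce the computation of $M(\CZ)$ to products of affine Schubert varieties over products of copies of $\wt C$, via the factorization (fusion) structure of the Beilinson--Drinfeld affine Grassmannian together with the stratification of $\wt C^n$ by collision patterns. First I would stratify $\wt C^n$ by set partitions $P$ of $\{1,\dots,n\}$, taking $(\wt C^n)_P$ to be the locally closed locus where $x_i=x_j$ exactly when $i,j$ lie in a common block of $P$. By the construction of $\CZ=\CZ(\ul\mu)$ as the Zariski closure of $GR_{\preceq\mu_1}\times\cdots\times GR_{\preceq\mu_n}$ away from the big diagonal, together with the factorization isomorphism for $GR_n$ (Definition-Remark~\ref{Def-RemHecke} and \cite{AH_LM}), one identifies $\CZ|_{(\wt C^n)_P}$ with a locally trivial fibration over $(\wt C^n)_P$ whose fibre is the product $\prod_{B\in P}S(\mu_B)$, where $\mu_B:=\sum_{i\in B}\mu_i$. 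Under the hypotheses of part~(a) the $S(\mu_i)$ are smooth and pure Tate, so the hypotheses of Proposition~\ref{PropLocToGlob}(2) hold and every fibre of $\CZ$ over $\wt C^n$ --- in particular each $\prod_{B\in P}S(\mu_B)$ --- is geometrically pure Tate; moreover each $S(\mu_B)$, being an affine Schubert variety for the parahoric $\FG$, carries a Bruhat/Schubert affine paving defined by the loop-group action, which globalises over $(\wt C^n)_P$, so $\CZ|_{(\wt C^n)_P}$ is relatively cellular in the sense of Proposition~\ref{Prop_DecompRelCel} (allowing a tower of cellular fibrations as in Remark~\ref{RemTowerOfFib}).

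For part~(a): the relative cellular decomposition gives $M^c(\CZ|_{(\wt C^n)_P})\cong\bigoplus_j M^c((\wt C^n)_P)(d_j)[2d_j]$, so the motive over each stratum lies in the thick subcategory generated by Tate twists of $M^c((\wt C^n)_P)$. Each $(\wt C^n)_P$ is open in a product of copies of $\wt C$, so by the K\"unneth formula and the localisation triangles its compactly supported motive, hence also $M^c(\CZ|_{(\wt C^n)_P})$, lies in the thick subcategory generated by pure Tate motives and $M(\wt C)$. Assembling the finitely many strata by localisation triangles, and using that $\CZ$ is proper over $\wt C^n$ and hence over $k$ (so $M^c(\CZ)\cong M(\CZ)$), one concludes that $M(\CZ)$ lies in the thick subcategory of $DM_{gm}(\ol k)$ generated by pure Tate motives and $M(\wt C)$. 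When $\wt C=C=\BP^1$, $M(\BP^1)=\BZ\oplus\BZ(1)[2]$ is pure Tate, so $M(\CZ)$ is mixed Tate over $\ol k$, hence geometrically mixed Tate over $k$ in the sense of Definition~\ref{DefGeoMixedTate}.

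For part~(b), with $\FG$ constant so $\wt C=C$, I would run the same stratification but record classes in $K_0[DM_{-}^{eff}(k)]$: additivity under localisation gives $[M^c(\CZ)]=\sum_P[M^c(\CZ|_{(C^n)_P})]$, and the relative cellular decomposition together with the Gillet--Soul\'e multiplicativity already invoked in Lemma~\ref{LemRelNett} (\cite[Proposition~3.2.2.5]{GS}) yields $[M^c(\CZ|_{(C^n)_P})]=[M^c((C^n)_P)]\cdot\prod_{B\in P}[M(S(\mu_B))]$, since each cellular $S(\mu_B)$ has class a polynomial in $\BL$. Re-indexing the partitions by their non-singleton block(s), and writing the generic strata via the distinguished subset $\ul\alpha$ with complementary indices forming singletons, produces $[M^c(\CZ)]=\sum_{\ul\alpha}[M^c((C^n)_{\ul\alpha})]\cdot[M(S(\mu_{\ul\alpha}))]\cdot\prod_{i\notin\ul\alpha}[M(S(\mu_i))]$; for $C=\BP^1$ each $(C^n)_{\ul\alpha}$ is a complement of diagonals in $(\BP^1)^n$, hence mixed Tate, and the $S(\mu_\bullet)$ are pure Tate, so the class lies in the subring of $K_0$ generated by $MT\textbf{DM}_{gm}^{eff}(k)$. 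For part~(c): by Definition-Remark~\ref{Def-RemHecke}\ref{heckeislocallGr} the stacks $Hecke_n^{\CZ}(C,\FG)$ and $\CZ\times\scrH^1(C,\FG)$ are locally isomorphic as families over $C^n\times\scrH^1(C,\FG)$; refining this (the relevant $\FL_n^+\FG$-torsors being special, or via a motivic descent argument) gives $[M^c(Hecke_n^{\CZ}(C,\FG))]=[M^c(\CZ)]\cdot[M^c(\scrH^1(C,\FG))]$ in $K_0[DM_{-}^{eff}(k)]$, and substituting the motivic mass formula $[M^c(\scrH^1(C,\FG))]=|\pi_1(G)|\,\BL^{(g-1)\dim G}\prod_{i=1}^r Z(C,\BL^{-d_i})$ of \cite[conjecture~3.4]{Beh07} together with the expression for $[M^c(\CZ)]$ from part~(b) yields the stated formula.

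The hard part will be the precise factorization/fusion input: that $\CZ(\ul\mu)$ restricts over each collision stratum $(\wt C^n)_P$ to a locally trivial fibration with fibre exactly $\prod_{B\in P}S(\mu_B)$, and that this fibration is relatively cellular, i.e.\ the Schubert affine paving of the \emph{fused} varieties $S(\mu_B)$ globalises over the base. In the constant split case both facts are classical (and already suffice for parts (b), (c) and the $\BP^1$-assertion of (a)), but in the general parahoric setting one must show that the fused $S(\mu_B)$ are again cellular --- which does not follow formally from cellularity of the $S(\mu_i)$ --- and this is where the smoothness and pure-Tateness hypotheses of part~(a), together with the results of \cite{Richarz} and \cite{AH_LM} on models of global affine Schubert varieties, have to be used with care. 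A secondary, purely combinatorial point is the bookkeeping translating the sum over set partitions into the displayed sum over subsets $\ul\alpha$; and in part~(c) one must check that the local triviality of $Hecke_n^{\CZ}$ is of a type surviving in $K_0$ and that $[M^c(\scrH^1(C,\FG))]$ is taken in the completed Grothendieck ring in which \cite[conjecture~3.4]{Beh07} is formulated.
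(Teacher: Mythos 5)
Your overall route is the paper's: stratify $\wt C^n$ by the coincidence pattern of the points, identify $\CZ(\ul\mu)$ over each stratum as a fibre bundle whose fibre is a product of (fused) Schubert varieties, obtain pure-Tateness of the fibres over the diagonal strata from the specialization argument packaged in Proposition~\ref{PropLocToGlob}, assemble by localization triangles, K\"unneth and properness, pass to classes in $K_0$ for part (b), and for part (c) combine the local isomorphism of $Hecke_n^{\CZ}(C,\FG)$ with $\CZ\times\scrH^1(C,\FG)$ (Definition-Remark~\ref{Def-RemHecke}, \cite[Proposition~2.0.11]{AH_LM}) with \cite[Conjecture~3.4]{Beh07}. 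Two places deserve comment. First, in part (a) you additionally claim that the Schubert paving of the fused varieties $S(\mu_B)$ globalises over the stratum, making $\CZ$ relatively cellular there; as you yourself concede, cellularity of the fused $S(\mu_B)$ does not follow from cellularity of the $S(\mu_i)$ and is not available for general parahoric $\FG$. The paper does not rely on this: it only uses that these fibres are geometrically pure Tate, which is exactly what the degeneration step (Colliot-Th\'el\`ene--Pirutka, inside the proof of Proposition~\ref{PropLocToGlob}) supplies, the remaining assembly being the induction over strata together with Theorem~\ref{ThmFrobistSemiSimpleOnZ}, Example~\ref{ExampleBigDiagonal} and K\"unneth in the $\BP^1$ case. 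If you keep the relative-cellularity claim as stated, part (a) of your argument is incomplete precisely at that point; if you drop it and follow the degeneration route, you should say explicitly how pure-Tateness of the fibres is converted into the decomposition of $M^c$ over each stratum, since the Leray--Hirsch statement (Theorem~\ref{Leray Hirsch for Voevodsky motives}) as formulated asks for cellular fibres.

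Second, your combinatorial ``re-indexing'' in part (b) does not literally work: the partition stratification you use contains strata with two or more non-singleton blocks (already for $n=4$, the locus $x_1=x_2\neq x_3=x_4$), whose fibre $\prod_B S(\mu_B)$ is not of the form $S(\mu_{\ul\alpha})\times\prod_{i\notin\ul\alpha}S(\mu_i)$ for any subset $\ul\alpha$, so summing over partitions and then ``re-indexing by the non-singleton block'' does not return the displayed subset-indexed sum. The paper indexes its strata by subsets $\ul\alpha$ from the outset, with fibre $S(\mu_{\ul\alpha})\times\prod_{i\notin\ul\alpha}S(\mu_i)$, and its proof of (b) is just additivity over these strata plus properness; if you prefer the finer partition stratification (which is the honest stratification of $C^n$), you should either state the class as a sum over partitions with factors $[M(S(\mu_B))]$, or justify the regrouping into the subset-indexed expression rather than treating it as bookkeeping. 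Apart from these two points, your use of Gillet--Soul\'e \cite[Proposition~3.2.2.5]{GS} in (b) and your caveats in (c) (that the torsor triviality survives in $K_0$ and that the Behrend--Dhillon class lives in the appropriate completed ring) are refinements of, not departures from, the paper's argument.
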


\begin{proof}
a) Using simple induction argument, this follows from the construction of $\CZ$ and the above proposition \ref{PropLocToGlob}. Recall that regarding the construction of $\CZ$, the restriction of $\CZ$ to the stratum corresponding to $\ul\alpha$ is a fiber bundle, whose fiber equals $S_\ul\alpha\times S^\ul\alpha:=S(\mu_\ul\alpha)\times\prod_{i\notin\ul\alpha}S(\mu_i)$. 
 When $C=\BP^1$ the statement follows from example \ref{ExampleBigDiagonal}, theorem \ref{ThmFrobistSemiSimpleOnZ}, K\"unneth formula and properness of $S^\ul\alpha$ and $S_\ul\alpha$. b) First part follows from construction and properness of $\CZ$. For $C=\BP^1$ the statement follows from part (a). c) This follows from \cite[Proposition 2.0.11]{AH_LM} and the above part b).

\end{proof}

\begin{remark}
Recall that for a minuscule coweight $\mu$ the Schubert variety $S(\mu)$ is cellular and thus the motive $M(S(\mu))$ is pure Tate. The above corollary provides a proof for the following fact that for a tuple $\ul\mu:=(\mu_i)$ of minuscule coweights the motive of $S(\ul\mu)$ in $DM_{gm}(\ol k)$ is pure Tate. Note that for this we do not need to consider the fibers of the resolution of singularities of $S(\ul\mu)$.
\end{remark}

{\small

}

\Verkuerzung

\vfill

\bigskip

\noindent
\begin{minipage}[t]{0.9\linewidth}
\noindent
Esmail Arasteh Rad, Universit\"at M\"unster, 
Mathematisches Institut, Einsteinstr. 62, D-48149 M\"unster, Germany\\
erad@uni-muenster.de\\
\\[1mm]
\end{minipage}

\noindent
\begin{minipage}[t]{0.9\linewidth}
\noindent
Somayeh Habibi, School of Mathematics, Institute for Research in Fundamental Sciences (IPM), P.O.Box: 19395-5746, Tehran, Iran \\ 
shabibi@ipm.ir
\end{minipage}

\end{document}